\documentclass[12pt]{amsart}
\usepackage{amsmath}
\usepackage{dsfont}
\usepackage{mathrsfs}
\usepackage{amssymb}
\usepackage{amscd}
\usepackage{graphicx}
\newtheorem{theorem}{Theorem}[section]
\newtheorem{proposition}[theorem]{Proposition}
\newtheorem{definition}[theorem]{Definition}
\newtheorem{corollary}[theorem]{Corollary}
\newtheorem{remark}[theorem]{Remark}

\usepackage{ amssymb }
\usepackage{tikz}
\usepackage{enumerate}
\usepackage{lscape}
\usepackage{mathabx}
\usepackage{color}

\begin{document}

	\title[
    ]{A Systematic Approach to Chaos in PDEs and their Semidiscretizations}
	
	\date{\today}
	
		\author[Vargas-Moreno]{\'Alvaro Vargas-Moreno}
	\address{Institut Universitari de Matemàtica Pura i Aplicada,\newline\indent Universitat Polit\`{e}cnica de Val\`{e}ncia, \newline\indent 46022, Val\`{e}ncia, Spain.}
	\email{alvarmo1@upv.es}
	
	\keywords{Devaney chaos; $C_0$-semigroups; Herzog spaces;  Semidiscrete equations.}
	\subjclass[2020]{47D06, 47A16}

	\begin{abstract}
		We present a systematic approach to characterize Devaney chaos in the solution of homogeneous one dimensional constant-coefficients PDEs on Herzog spaces. We show that for equations where the highest-order time derivative lacks mixed spatial derivatives, there always exists a weighted space in which the solution is chaotic. This analysis is extended to include mixed partial derivatives in the highest-order temporal term. Moreover, we link the chaotic behavior in Herzog spaces to the semidiscretization of the equations by identifying isometric isomorphisms between the two contexts,  providing also a characterization for chaos on these semidiscretizations. The work concludes with two applications that provide new insights on the dynamics of the fourth-order Moore-Gibson-Thompson equation and the viscous van Wijngaarden-Eringen equation.
	\end{abstract}
	
	\thanks {The author is supported by MCIN/AEI/10.13039/501100011033/FEDER, UE, Project PID2022-139449NB-I00. }
	
	\maketitle
	
	\section{Introduction}

	The qualitative analysis of evolution equations is a central theme in the study of partial differential equations. In this context, the theory of strongly continuous semigroups (or $C_0$-semigroups) has been very useful \cite{Pa} since it allows us to study the dynamical properties of the solution in terms of the infinitesimal generator \cite{engel-nagel}. Although classical research centered on well-posedness, stability and asymptotic behavior, chaotic dynamics in PDEs have recently gained attention from both theoretical and applied points of view. \\
    
    Chaos is a phenomenon usually associated with nonlinear maps and equations. Nevertheless, it is well known that linear operators and equations can also exhibit chaotic behavior in infinite-dimensional spaces (see \cite{bayart, Alfred}). In this context, the study of chaos in strongly continuous semigroups arises as the continuous counterpart of the well-developed theory for linear operators. In \cite{Herzog}, Herzog introduced a family of spaces of analytic functions controlled by a weight parameter to analyze Devaney chaos for the heat equation. This class of spaces has proven particularly well-suited for studying chaos in one-dimensional, homogeneous, constant-coefficient equations. Since then, many results on Devaney chaos have been established for such equations, including third- and fourth-order Moore–Gibson–Thompson equations \cite{conejero_lizama_rodenas2015chaotic, lizamamurilloMGT}, the viscous Van Wijngaarden equation \cite{conejerol_lizama_murillo2016vanwjingaarne}, and the KdV and BBM equations \cite{LMV}, among others \cite{conejero_martinez-gimenez_peris_rodenas2016chaotic, Conejero_lizama_murillo2017Chaotic, conejero_peris_trujillo2010chaotic, lizamamurillo2023, LMP, zhu, yang}.

    The aim of this work is to establish a unified framework for analyzing Devaney chaos in this class of equations. While the standard approach in Herzog spaces involves studying the full solution semigroup, which incorporates both the solution and its higher-order time derivatives, our main focus here is on the dynamics of the primary solution itself. To this end, following the strategy in \cite{MPV-chaos}, we characterize Devaney chaos for the projection family of operators associated with the solution. This approach avoids the necessity of a case-by-case analysis. Furthermore, as demonstrated in the final section for the Moore–Gibson–Thompson and viscous Van Wijngaarden equations, this general framework relaxes the technical requirements for establishing chaotic dynamics, hence extending the parameter conditions under which we obtain this phenomenon.\\

We complete the analysis of these types of equations by studying the dynamical behavior of their semidiscretization. That is, we consider the differential-difference equations that result from the semidiscretization in space of the equations, in accordance with the so-called Method of Lines (see \cite{MOL}). 
In this context, we follow recent works such as \cite{lizamamurillo2024} and \cite{slavik}, where the authors studied differential-difference equations from a functional analysis point of view.  
In our work, we characterize the dynamics in terms of Devaney chaos for the solutions of semidiscrete equations. We prove that we cannot obtain Devaney chaos for the solution semigroups by using backward difference schemes. Nevertheless, that is not the case for forward difference schemes, in which we provide necessary and sufficient conditions for chaos on the projection family of operators associated with the solution. Moreover, we prove that every abstract Cauchy problem associated with the classes of PDEs considered in this work, on a Herzog space of weight $\rho>0$, is isometrically isomorphic to the forward semidiscretization of another abstract Cauchy problem of the same type, taking the spatial step $h=\frac{1}{\rho}$. This establishes an interesting connection between the dynamics of the analytical and numerical solutions of partial differential equations.\\

We divide the analysis into two main cases. In the first case, the highest-order time derivative appears without mixed spatial derivatives. 
\begin{equation}\label{caso ecuacion polinomial}
\partial_t^n u + \sum_{\substack{0 \leq i < n \\ 0 \leq j \leq m}} a_{i,j}\,\partial_t^i \partial_x^j u = 0, \quad n, m\in\mathds{N}.
\end{equation}
In the second case, mixed partial derivatives are allowed.
\begin{equation}\label{caso ecuación racional}
 \partial_t^nu+\sum_{j=1}^m a_{n,j}\,\partial_t^n \partial_x^j u+\sum_{\substack{0 \leq i < n \\ 0 \leq j \leq m}} a_{i,j}\,\partial_t^i \partial_x^j u = 0, \quad n, m\in\mathds{N}.
\end{equation}

For each case we can transform the $n$-th order in time PDE into a system of first order in time partial differential equations. Namely, for the first case we have: 
\begin{equation}\label{sistema ecuaciones caso polinomico}
\begin{cases}
    &u_1=u\\
    &\partial_t u_1=u_2\\
    &\vdots\\
    &\partial_t u_{n-1}=u_n\\
    &\partial_tu_n=-\sum_{j=0}^ma_{n-1, j}\partial_x^ju_n-...-\sum_{j=0}^ma_{0, j}\partial_x^j u_1
\end{cases},
\end{equation}
and for the second case: 
\begin{equation}\label{sistema ecuaciones caso racional}
\begin{cases}
    &u_1=u\\
    &\partial_t u_1=u_2\\
    &\vdots\\
    &\partial_t u_{n-1}=u_n\\
    &\partial_t(I+\sum_{j=1}^ma_{n,j}\partial_x^j)u_n=-\sum_{j=0}^ma_{n-1, j}\partial_x^ju_n-...-\sum_{j=0}^ma_{0, j}\partial_x^j u_1
\end{cases}.
\end{equation}

This work is organized as follows. In Section 2 we recall some preliminaries on dynamics of operators, strongly continuous semigroups and some notions on resultants and discriminants of polynomial functions. Sections 3 and 4 are devoted to characterizing sub-chaos for the solution semigroups of first-order and n-th-order in time equations, respectively. Section 5 applies these results to semidiscrete models, while Section 6 provides applications to the fourth-order Moore–Gibson–Thompson and viscous van Wijngaarden–Eringen equations.
	
\section{Preliminaries}\label{preliminaries}
		In this section, we recall some notions and results about linear dynamics, strongly continuous semigroups of operators, functional calculus and discriminants and resultants of polynomial functions.  
		
		\subsection{Dynamics of linear operators} \vspace{12pt}
		An operator $T$ on a topological vector space $X$ is called \textit{hypercyclic} if there is a vector $x\in X$ such that its orbit $\text{Orb}(x, T)=\{x, Tx, T^2 x, ...\}$ is dense in $X$. 
		Furthermore, an operator $T$ on a topological vector space $X$ is said to be \textit{Devaney chaotic} if it is hypercyclic and the set of periodic points $\text{Per}(T)$ is dense in $X$. It is a very well known result \cite{Banks-Brook} that a map $T$ which is chaotic on a metric space $X$ without isolation points is \textit{sensitive}, i.e. there exists $\delta>0$ such that for each $x\in X$ and each $\epsilon>0$, there are $y\in X$ and $n\in\mathds{N}$ with $d(x, y)<\epsilon$ and $d(T^n(x), T^n(y))>\delta$. Finally, an operator is said to be \textit{sub-chaotic} if there is an invariant closed subspace $Y\neq\{0\}$ such that $(T|_{Y}): Y\rightarrow Y$ is Devaney chaotic. \
        
        In this paper, we will work with $n$-th order in time PDEs in the form of systems of first order partial differential equations, as in \eqref{sistema discreto ecuaciones caso polinomico} and \eqref{sistema discreto ecuaciones caso racional}. Therefore, following \cite{MPV-chaos}, we will study the dynamics of the first coordinate projection of the solution semigroup. This approach is taken to focus exclusively on the dynamics of the solution of the original equation, rather than the evolution of the solution and its derivatives. This motivates the following definitions. \
        \begin{definition}
 A sequence of operators $T_n: Z\rightarrow X$, $n\in\mathds{N}_0$, is hypercyclic (or universal) if there exists $z\in Z$ such that $\text{Orb}(z, (T_n)):=\{T_nz: n\in\mathds{N}_0\}$ is dense in $X$. 
	\end{definition}
	\begin{definition}\label{projseq}
 Let $\mathcal{T}: X^n\rightarrow X^n$ be an operator, then we define the projection sequence of operators  $T_n: X^n\rightarrow X$ given by 
 $$
 T_n(x_1,..., x_n):=\pi(\mathcal{T}^n(x_1,..., x_n)), \ \ \mbox{ for } (x_1, ..., x_n)\in X^n, 
 $$
where $\pi: X\times X\rightarrow X$ is the first coordinate projection operator.

 Within this framework, $(T_n)_n$ is said to be Devaney chaotic if it satisfies the following two conditions: 
		\begin{enumerate}[(i)]
			\item There exists a $\mathcal{T}$-invariant subspace $Y$ of $X^n$ with $\overline{\pi (Y)}=X$. 
			\item $\mathcal{T}$ is sub-chaotic with respect to $Y$.
		\end{enumerate}
	\end{definition}
 
 \begin{remark}
   Observe that if a projection sequence $(T_n)_n$ as considered in Definition~\ref{projseq} is Devaney chaotic then the sequence of operators $T_n: X ^n\rightarrow X$, $n\in\mathds{N}_0$, is hypercyclic, and the set $\{ x\in X \ : \ \exists (z_1, ..., z_{n-1})\in X^{n-1} \ \mbox{ with } \ (x,z_1, ..., z_{n-1})\in\text{Per}(\mathcal{T})\}$ is dense in $X$. 
 \end{remark} 
		Given an operator $T:X\rightarrow X$ on a complex Banach space $X$, a function $E:A\rightarrow X$ for certain $A\subset \mathds{C}$ is an \textit{eigenvector field} if $E(\lambda)\in\text{ker}(\lambda I-T)$ for any $\lambda\in A$.  The following criterion adapts the classical Eigenvector Field criterion \cite{bayartgrivauxhyperci} to the context of sub-chaos of operators. 

	\begin{theorem}\label{eigenvector-criterion operador} 
 Let $T: X\rightarrow X$ be an operator on a complex Banach space X. If $U\subset \mathds{C}$ is a nonempty connected open set such that $U\cap\mathds{T}\neq \emptyset$ and $G:U\rightarrow X$ is a weakly holomorphic eigenvector field of $T$, then the restriction of $T$ to the invariant subspace
 $$Y:=\overline{\text{span}\{G(\lambda): \lambda\in U\}}$$
 is Devaney chaotic. Moreover, if $Y=X$ then $T$ is Devaney chaotic.  
	\end{theorem}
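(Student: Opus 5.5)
The plan is to apply the classical Eigenvector Field criterion of \cite{bayartgrivauxhyperci} to the restriction $T|_Y$. First I check that $Y$ is a closed $T$-invariant subspace. For every $\lambda\in U$ we have $TG(\lambda)=\lambda G(\lambda)$, so $T$ maps $\text{span}\{G(\lambda):\lambda\in U\}$ into itself. Since $T$ is continuous, it also maps the closure $Y$ into $Y$. Hence $T|_Y:Y\to Y$ is a bounded operator on the complex Banach space $Y$. Moreover, $G$ takes values in $Y$, and $G$ is still weakly holomorphic as a map into $Y$. Indeed, by Hahn--Banach every $y^*\in Y^*$ extends to some $x^*\in X^*$, and $y^*\circ G=x^*\circ G$ is holomorphic on $U$.

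Next I verify the hypotheses of the criterion for $T|_Y$ and $G:U\to Y$. The criterion requires $\text{span}\{G(\lambda):\lambda\in U\}$ to be dense in $Y$, which holds by the very definition of $Y$. If one uses the version of the criterion that asks for density of the spans over subsets of $U$ with an accumulation point, I use the standard identity-principle argument. Let $V\subset U$ have an accumulation point in $U$, and let $y^*\in Y^*$ vanish on $G(V)$. Then $y^*\circ G$ is holomorphic on the connected set $U$ and vanishes on $V$, so it vanishes on all of $U$, and therefore $y^*=0$ on $Y$. The same argument handles the version stated with $U\cap\mathds{T}$ and with sets of roots of unity. Because $U$ is open and $U\cap\mathds{T}\neq\emptyset$, $U$ contains an arc of $\mathds{T}$, and such an arc contains infinitely many roots of unity accumulating in $U$.

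From this I conclude the two properties separately. Density of the periodic points comes from the roots of unity $\lambda\in U\cap\mathds{T}$: each such $G(\lambda)$ is periodic, since $T^kG(\lambda)=G(\lambda)$ when $\lambda^k=1$. By the identity-principle argument, their span is dense in $Y$. Hypercyclicity comes from the criterion, with the sets $U\cap\mathbb{D}$ and $U\setminus\overline{\mathbb{D}}$ as $V$; both are nonempty open sets, again because $U$ is open and meets $\mathds{T}$. Together these give that $T|_Y$ is Devaney chaotic. The final assertion is immediate: if $Y=X$, then $T|_Y=T$.

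The main obstacle is being careful about which version of the Eigenvector Field criterion is available and about the transfer of weak holomorphy to $Y$. The Hahn--Banach extension step handles the latter, and the connectedness of $U$ combined with the identity principle handles the density of the relevant spans. Beyond that, the proof is a direct reduction to the known criterion applied on $Y$.
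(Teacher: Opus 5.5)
Your proposal is correct and follows the paper's approach: the paper gives no separate proof and presents the statement as the classical Eigenvector Field criterion applied to the restriction $T|_Y$. That is exactly the reduction you carry out (invariance of $Y$, Hahn--Banach transfer of weak holomorphy, identity principle for the relevant spans). Two points are worth stating explicitly. First, $Y$ is separable, which the Birkhoff-transitivity step behind the Godefroy--Shapiro criterion requires; this follows from your observation that the span of $G(\lambda)$ over the countable set of roots of unity in $U$ is dense in $Y$. Second, the periodic points of a linear operator form a linear subspace, so that span consists of periodic points.
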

    The following is a necessary condition for (sub)-chaos. 
    \begin{theorem}\label{condicion necesaria caos operador}\cite[Prop. 5.7]{Alfred} Let $T$ be a Devaney chaotic operator on a Banach space $X$. Then $\sigma(T)$ has no isolated points and its point spectrum $\sigma_P(T)$ contains infinitely many roots of unity.      
    \end{theorem}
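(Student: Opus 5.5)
The plan is to extract everything from the density of periodic points, using the standard decomposition of a periodic point into eigenvectors, and to use hypercyclicity only to rule out degenerate cases. Throughout we assume $X\neq\{0\}$, since otherwise the statement is void.

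\emph{Step 1 (periodic points are sums of eigenvectors).} Let $x\in\text{Per}(T)$ with $T^Nx=x$, and let $\omega$ range over the $N$-th roots of unity. For each such $\omega$ set
$$P_\omega x:=\frac1N\sum_{k=0}^{N-1}\omega^{-k}T^kx .$$
A direct computation using $T^Nx=x$ gives $TP_\omega x=\omega P_\omega x$. The orthogonality relations for roots of unity give $\sum_\omega P_\omega x=x$. Hence every periodic point is a finite sum of eigenvectors whose eigenvalues are roots of unity. Since $\text{Per}(T)$ is dense, the linear span of $\bigcup_{\omega}\ker(T-\omega I)$, with $\omega$ ranging over all roots of unity, is dense in $X$.

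\emph{Step 2 (infinitely many roots of unity in $\sigma_P(T)$).} Suppose only finitely many roots of unity $\omega_1,\dots,\omega_r$ are eigenvalues. Let $M$ be a common multiple of their orders. Then $T^M=I$ on each $\ker(T-\omega_iI)$, hence on their span. By Step 1 this span is dense, so continuity gives $T^M=I$ on $X$. Every orbit is then the finite set $\{x,Tx,\dots,T^{M-1}x\}$. A finite set cannot be dense in a nonzero Banach space, which contradicts hypercyclicity.

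\emph{Step 3 (no isolated points in $\sigma(T)$).} Suppose $\lambda$ is an isolated point of $\sigma(T)$.

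1. Let $P$ be the Riesz projection associated with $\{\lambda\}$. This gives a decomposition $X=X_1\oplus X_2$ into closed $T$-invariant subspaces, with $X_1=PX\neq\{0\}$ and $\sigma(T|_{X_1})=\{\lambda\}$.
2. Since $PT=TP$ and $P$ maps $X$ onto $X_1$, the operator $P$ carries a dense orbit of $T$ to a dense orbit of $T_1:=T|_{X_1}$. It also carries periodic points to periodic points, and their images are dense in $X_1$. So $T_1$ is Devaney chaotic on $X_1$.
3. Applying Step 1 to $T_1$, the span of eigenvectors of $T_1$ with root-of-unity eigenvalues is dense in $X_1\neq\{0\}$. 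Each such eigenvalue lies in $\sigma(T_1)=\{\lambda\}$. Hence $\lambda=\omega$ is a root of unity, $\ker(T_1-\omega I)$ is dense in $X_1$, and being closed it equals $X_1$.
4. Thus $T_1=\omega I$, and every orbit $\{\omega^kx\}$ of $T_1$ is finite. This contradicts the hypercyclicity of $T_1$ on $X_1\neq\{0\}$.

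The only delicate point is Step 3, specifically the transfer of chaos to the spectral subspace. This requires the Riesz projection to commute with $T$ and to be onto $X_1$, both of which are standard properties of the holomorphic functional calculus.
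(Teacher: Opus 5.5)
Your argument is correct and complete. The paper does not prove this statement but only cites \cite[Prop.~5.7]{Alfred}, and your route is essentially the standard proof of that result: periodic points split into eigenvectors for roots of unity, finitely many such eigenvalues would force $T^M=I$, and an isolated point $\lambda$ is excluded by passing to the Riesz spectral subspace, on which $T$ is again chaotic. Two small remarks: the statement tacitly requires $X$ to be complex, as in the paper's setting, and Step~3 could be shortened by applying Step~2 directly to $T_1$, since infinitely many eigenvalues cannot lie in $\sigma(T_1)=\{\lambda\}$.
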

		
		\subsection{Dynamics of strongly continuous semigroups}
		
		We now recall the corresponding notions in linear dynamics for  strongly continuous semigroups of operators. 
		
		\begin{definition}
			Let $X$ be a Banach space. A strongly continuous semigroup ($C_0$-semigroup) is a one-parameter family $(T_t)_{t\geq 0}\subset \mathcal{B}(X)$ of operators on $X$ such that the following assertions hold: 
            \begin{enumerate}[(i)]
                \item $T_0=I.$
                \item $T_{t+s}=T_t\circ T_s$ for all $t, s\geq 0.$
                \item $\lim_{s\rightarrow t}T_s x=T_t x$ for all $x\in X$ and $t\geq 0$. 
            \end{enumerate} 
            The operator 
			$$Ax:=\lim_{t\rightarrow 0}\frac{1}{t}(T_tx-x),$$
			exists on a dense subspace of $X$ denoted by $D(A)$; the so-called domain of $A$ and  $(A,D(A))$ is called the infinitesimal generator of the semigroup.
		\end{definition}
		
		As a consequence of the Hille-Yosida theorem \cite[Theorem 7.4]{brezis2011functional},  the solution of the abstract Cauchy problem on $X$ given by: 
		\begin{equation}\label{abcauchy}
			\left\{\begin{array}{ll}
				\partial_t u(t)=A u(t)\\
				u(0)=\varphi,
			\end{array}
			\right.
		\end{equation}
		is $u(t)= T_t \varphi$ whenever $\varphi \in D(A)$. If $A\in\mathcal{B}(X)$, that is, $A$ is a bounded linear operator, then the semigroup is \textit{uniformly continuous} and can be represented as $T_t=e^{tA}=\sum_{k=0}^{\infty}(tA)^n/n!$ for all $t\geq 0$ (see \cite[Ch. I, Prop. 3.5]{engel-nagel}). We  recall the definition of Devaney chaos for a strongly continuous semigroup.
		\begin{definition}
		An element $x\in X$ is  called a periodic point for  $(T_t)_{t\geq 0}$ if there exists some $t>0$ such that $T_tx=x$. 
		A $C_0$-semigroup $(T_t)_{t\geq 0}$ is called Devaney chaotic if  there exists $x\in X$ such that the set $\{T_tx:t\geq 0\}$ is dense in $X$ and the set of periodic points is dense in $X$. It is said to be sub-chaotic if there exists a closed subspace $Y\neq \{0\}$ invariant under $(T_t)_{t\geq 0}$, such that $(T_t|_Y)_{t\geq 0}$ is Devaney chaotic as a $C_0$-semigroup on $Y$. 
	\end{definition}
    As in the discrete case, we also define the projection family of operators in the context of strongly continuous semigroups. 
    \begin{definition}\label{projfam}
 Let $\mathcal{T}_t: X^n\rightarrow X^n$, $t\geq 0$, be a strongly continuous semigroup. Then we define the projection family of operators  $T_t: X ^n\rightarrow X$, $t\geq 0$, given by 
 $$
 T_t(x_1, ..., x_n):=\pi(\mathcal{T}_t(x_1, ..., x_n)), \ \ \mbox{ for } \ (x_1, ..., x_n)\in X^n, 
 $$
where $\pi: X^n\rightarrow X$ is the first coordinate projection operator.  $(T_t)_{t\geq 0}$ is said to be Devaney chaotic if $(\mathcal{T}_t)_{t\geq 0}$ is sub-chaotic with respect to a closed subspace $Y\neq \{0\}$, invariant for the $C_0$-semigroup, with $\overline{\pi (Y)}=X$. 
	\end{definition}

		The following criterion, introduced in \cite{BM2}, provides sufficient conditions to ensure  sub-chaos for $C_0$-semigroups.  We refer the reader to the seminal paper \cite{desch_schappacher_webb1997hypercyclic}, where the original  Desch-Schappacher-Webb criterion for chaos on $C_0$-semigroups was stated.

	\begin{proposition}\label{subcaos semigrupos}\cite{BM2} Let $X$ be a complex separable infinite-dimensional Banach space and let $(A, D(A))$ be the generator of a $C_0$-semigroup $(T_t)_{t\geq 0}$ on $X$. Assume that there exists an open connected subset $U$ and a weakly holomorphic function $f:U\rightarrow X$ such that 
		\begin{enumerate}[(i)]
			\item $U\cap i\mathds{R}\neq\emptyset,$
			\item $f(\lambda)\in\text{ker}(\lambda I-A)$ for every $\lambda\in U.$
		\end{enumerate}
		Then the restriction of the $C_0$-semigroup $(T_t)_{t\geq 0}$ to the invariant space
		$$X_U:=\overline{\text{span}\{f(\lambda):\lambda\in U\}}$$
		is Devaney chaotic. In particular, the $C_0$-semigroup $(T_t)_{t\geq 0}$ is sub-chaotic. Also, if $X_U=X$ then  $(T_t)_{t\geq 0}$ is chaotic.
	\end{proposition}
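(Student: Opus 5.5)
The proof reduces the continuous-time statement to the discrete-time one, Theorem~\ref{eigenvector-criterion operador}, applied to a single operator $T_{t_0}$ of the semigroup. Then we use strong continuity to pass from the density of a discrete orbit to the density of the continuous orbit, and to get a dense set of periodic points for $(T_t)_{t\geq 0}$.

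\textbf{Invariance and eigenvectors of $T_t$.} For $\lambda\in U$ we have $f(\lambda)\in D(A)$ and $Af(\lambda)=\lambda f(\lambda)$. Hence $u(t)=e^{\lambda t}f(\lambda)$ is a classical solution of the abstract Cauchy problem \eqref{abcauchy} with initial datum $f(\lambda)$. By uniqueness of such solutions, $T_tf(\lambda)=e^{\lambda t}f(\lambda)$ for every $t\geq 0$. Consequently $X_U$ is invariant under every $T_t$, and the restrictions $T_t|_{X_U}$ form a $C_0$-semigroup on the closed subspace $X_U$.

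\textbf{Choice of $t_0$ and the discrete criterion.} Pick $i s_0\in U\cap i\mathds{R}$ and a small disc $D\subset U$ around it. Since $U$ is open, we may take $s_0\neq 0$, so $e^{t_0\lambda}$ is not constant near $is_0$ for any $t_0>0$. Fix $t_0>0$. The map $\lambda\mapsto e^{t_0\lambda}$ is holomorphic and nonconstant on $D$, so it is an open map. Therefore $V:=e^{t_0 D}$ is a nonempty connected open set meeting $\mathds{T}$, because $e^{it_0 s_0}\in\mathds{T}$.

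Shrinking $D$, we may assume $\lambda\mapsto e^{t_0\lambda}$ is injective on $D$; its derivative $t_0e^{t_0\lambda}$ never vanishes, so it is locally biholomorphic. Let $\ell$ be the holomorphic inverse from $V$ to $D$, and define $G(\mu):=f(\ell(\mu))$. Then $G$ is weakly holomorphic and $T_{t_0}G(\mu)=\mu G(\mu)$. Theorem~\ref{eigenvector-criterion operador} then gives that $T_{t_0}$ is Devaney chaotic on $Y_D:=\overline{\text{span}\{f(\lambda):\lambda\in D\}}$.

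\textbf{Identity principle.} We need $Y_D=X_U$. If $x^*\in X^*$ annihilates $f(\lambda)$ for all $\lambda\in D$, then $x^*\circ f$ is holomorphic on the connected set $U$ and vanishes on the open set $D$. By the identity principle it vanishes on all of $U$. By Hahn--Banach, $Y_D=X_U$.

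\textbf{Transfer to the semigroup.} A hypercyclic vector $x$ for $T_{t_0}$ on $X_U$ has $\{T_tx:t\geq 0\}\supset \text{Orb}(x,T_{t_0})$, which is dense. Every periodic point of $T_{t_0}$ is periodic for the semigroup with period $t_0$. So $(T_t|_{X_U})_{t\geq 0}$ is Devaney chaotic on $X_U$. Since $X_U\neq\{0\}$, the semigroup is sub-chaotic, and if $X_U=X$ it is chaotic.

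\textbf{Main obstacle.} The delicate step is producing the holomorphic reparametrization $G$ on a connected open set meeting $\mathds{T}$, i.e.\ the local inversion of the exponential near $is_0$. Also needed are that a weakly holomorphic function composed with a holomorphic map is again weakly holomorphic, and the identity-principle argument ensuring that no part of $X_U$ is lost when shrinking $U$ to $D$.
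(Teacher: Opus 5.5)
The paper gives no proof of this proposition; it is quoted from \cite{BM2}. Your argument is correct, but it takes a different route from the classical Desch--Schappacher--Webb argument that \cite{BM2} builds on.

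That classical argument stays in continuous time and in the spectral variable. By the identity principle, the spans of $f(\lambda)$ over $\{\lambda\in U:\mathrm{Re}\,\lambda<0\}$, over $\{\lambda\in U:\mathrm{Re}\,\lambda>0\}$, and over $U\cap i\mathds{Q}$ are each dense in $X_U$. The first two spans feed a continuous-time hypercyclicity criterion: $T_tx\to 0$ on the first span, and on the second, small preimages $e^{-\lambda t}f(\lambda)$ are mapped onto $f(\lambda)$. The third span consists of periodic points.

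You instead discretize at a fixed time $t_0$ and invoke Theorem~\ref{eigenvector-criterion operador}, after reparametrizing the eigenvector field through a local branch of $\frac{1}{t_0}\log$. This is exactly the device the paper itself uses in proving $(ii)\Rightarrow(iii)$ of Theorem~\ref{caos matriz semigrupos}. Your route gives a stronger conclusion for free: every single operator $T_{t_0}|_{X_U}$, $t_0>0$, is Devaney chaotic. This is the ``every autonomous discretization'' phenomenon stressed in Theorems~\ref{equivalencias primer orden} and~\ref{caos matriz semigrupos}. The price is that you depend on the discrete criterion, which the paper states without proof.

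The step you call the main obstacle is actually avoidable. Since $|e^{t_0\lambda}|<1$ iff $\mathrm{Re}\,\lambda<0$, and $e^{t_0\lambda}$ is a root of unity iff $\lambda\in\frac{2\pi i}{t_0}\mathds{Q}$, you can run the Godefroy--Shapiro density argument for $T_{t_0}$ directly in the variable $\lambda$. No inversion of the exponential is needed.

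There are also a few small corrections.
\begin{itemize}
\item Your claim $X_U\neq\{0\}$ does not follow from the hypotheses as written. The zero function $f\equiv 0$ satisfies (i) and (ii) and gives $X_U=\{0\}$. So the ``sub-chaotic'' conclusion needs the tacit assumption $f\not\equiv 0$, which is also implicit in the statement; state it explicitly.
\item A periodic point of $T_{t_0}$ satisfies $T_{t_0}^k x=x$ for some $k$. It is therefore periodic for the semigroup with period $kt_0$, not $t_0$.
\item Choosing $s_0\neq 0$ is unnecessary, since $\lambda\mapsto e^{t_0\lambda}$ is never locally constant when $t_0>0$.
\end{itemize}
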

    
        Let us also recall the following necessary condition for sub-chaos on strongly continuous semigroups. 
        \begin{theorem} \cite[Th. 7.18]{Alfred}\label{necessary condition chaos} Let $(A, D(A))$ be the generator of a sub-chaotic strongly continuous semigroup on $X$. Then 
        $$\sigma_P(A)\cap i\mathds{R}$$
        is infinite. 
            
        \end{theorem}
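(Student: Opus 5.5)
The statement to prove is Theorem~\ref{necessary condition chaos}: if $(A,D(A))$ generates a sub-chaotic $C_0$-semigroup on $X$, then $\sigma_P(A)\cap i\mathds{R}$ is infinite. The plan is to pass to a single time-$t$ operator of the restricted semigroup, apply the discrete necessary condition Theorem~\ref{condicion necesaria caos operador}, and then carry the resulting eigenvectors back to eigenvectors of $A$ through the spectral mapping theorem for the point spectrum.

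\textbf{Step 1 (reduce to a chaotic semigroup).} Let $Y\neq\{0\}$ be a closed invariant subspace on which $(T_t|_Y)_{t\geq 0}$ is Devaney chaotic. This restriction is a $C_0$-semigroup on $Y$, and its generator $A_Y$ is the part of $A$ in $Y$. Any eigenvector of $A_Y$ is an eigenvector of $A$, so $\sigma_P(A_Y)\subset\sigma_P(A)$. It therefore suffices to treat the case $Y=X$.

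\textbf{Step 2 (a single chaotic operator).} By the Conejero--Müller--Peris theorem, every single operator $T_{t_0}$, $t_0>0$, of a hypercyclic semigroup is hypercyclic. For periodic points we use the fact that chaos of a $C_0$-semigroup is inherited by every $T_{t_0}$ (Bayart--Bermúdez). If we prefer to avoid that result, we can fix $t_0>0$ and note that a periodic point of period $s$ is periodic for $T_{t_0}$ whenever $s/t_0\in\mathds{Q}$. Either route makes $T_{t_0}$ Devaney chaotic. Theorem~\ref{condicion necesaria caos operador} then gives infinitely many roots of unity $\mu_k$ in $\sigma_P(T_{t_0})$.

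\textbf{Step 3 (back to $A$).} Use the spectral mapping theorem for the point spectrum:
$$\sigma_P(T_{t_0})\setminus\{0\}=e^{t_0\sigma_P(A)}.$$
Concretely, if $T_{t_0}x=\mu x$ with $\mu=e^{i\theta}$, then
$$x=\sum_{k\in\mathds{Z}} x_k,\qquad x_k=\frac{1}{t_0}\int_0^{t_0} e^{-\lambda_k s}T_s x\,ds,\qquad \lambda_k=\frac{i(\theta+2\pi k)}{t_0},$$
interpreted as a Fourier/Cesàro expansion. Each $x_k$ lies in $\ker(\lambda_k-A)$, and at least one $x_k\neq 0$. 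Since $|\mu|=1$, the corresponding $\lambda_k$ is purely imaginary. Distinct $\mu$'s produce distinct $\lambda$'s, so $\sigma_P(A)\cap i\mathds{R}$ is infinite.

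\textbf{Main obstacle.} The delicate point is Step 2, namely ensuring that periodic points of the semigroup give a dense set of periodic points for one fixed $T_{t_0}$, since periods may be incommensurable with $t_0$. If this fails, there is a direct alternative that bypasses it. Take a periodic point $x\neq 0$ with $T_s x=x$ and decompose it into eigenvectors of $A$ with eigenvalues $2\pi i k/s$, as in Step 3. Then argue that if $\sigma_P(A)\cap i\mathds{R}$ were finite, all periodic points would lie in the finite-dimensional span of the corresponding eigenspaces restricted to the periodic part. A finite-dimensional space cannot be dense when the hypercyclic space $X$ is infinite-dimensional, which gives the contradiction.
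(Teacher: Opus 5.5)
The paper gives no proof of this statement; it simply cites \cite{Alfred}. Your Steps~1 and~3 are fine: the reduction to the part of $A$ in $Y$, and the Fourier--Fej\'er decomposition of a unimodular eigenvector of $T_{t_0}$. \textbf{The gap is in Step~2}, and the main line of your argument depends on it. Chaos of a $C_0$-semigroup is \emph{not} inherited by its individual operators. Bayart and Berm\'udez \cite{bayart-bermudez} constructed chaotic semigroups some of whose operators $T_t$ are not chaotic; the paper cites them for exactly this point in the remark after Theorem~\ref{equivalencias primer orden}. Conejero--M\"uller--Peris transfers hypercyclicity only, not density of periodic points. Your commensurability workaround does not fix this, because a dense set of periodic points can have periods that no single $t_0$ is commensurable with. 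The spectral mapping theorem makes the obstruction concrete. Since $\sigma_P(T_{t_0})\cap\mathds{T}=e^{t_0(\sigma_P(A)\cap i\mathds{R})}$, if $\sigma_P(A)\cap i\mathds{R}$ is countable then, for all but countably many $t_0$, the only root of unity in $\sigma_P(T_{t_0})$ is possibly $1$. Such a $T_{t_0}$ is not chaotic by Theorem~\ref{condicion necesaria caos operador}. So there is no a priori choice of $t_0$ for which Step~2 holds.

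Your fallback has the right skeleton but rests on a false claim. Finiteness of $\sigma_P(A)\cap i\mathds{R}$ does not put the periodic points in a finite-dimensional space, because the eigenspaces $\ker(i\alpha-A)$ can be infinite-dimensional. For example, $A=0$ on an infinite-dimensional space has $\sigma_P(A)\cap i\mathds{R}=\{0\}$ and every vector is periodic. What your Fourier decomposition does give, if $\sigma_P(A_Y)\cap i\mathds{R}=\{i\alpha_1,\dots,i\alpha_N\}$, is this: every periodic point lies in $E_1+\dots+E_N$ with $E_j=\ker(i\alpha_j-A_Y)$, since only finitely many Fourier coefficients can be nonzero. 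Hence this sum is dense in $Y$, and $N\geq 1$ because $Y\neq\{0\}$. The missing step is to use hypercyclicity through an algebraic identity rather than a dimension count. Choose $t_0>0$ with the $\mu_j:=e^{i\alpha_j t_0}$ pairwise distinct. Then $\prod_j(T_{t_0}-\mu_j I)$ vanishes on each $E_j$, hence on $Y$ by density and continuity. The Lagrange operators $P_j:=\prod_{k\neq j}(T_{t_0}-\mu_k I)/(\mu_j-\mu_k)$ on $Y$ are bounded and satisfy $\sum_j P_j=I$ and $T_{t_0}P_j=\mu_j P_j$. Therefore $T_{t_0}^n=\sum_j\mu_j^n P_j$, and $\|T_t|_Y\|\leq\sup_{0\leq r\leq t_0}\|T_r\|\sum_j\|P_j\|$ for all $t\geq 0$. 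A bounded semigroup on $Y\neq\{0\}$ has only bounded orbits and cannot be hypercyclic, which gives the contradiction. Alternatively, $T_{t_0}|_Y$ is algebraic, yet hypercyclic by Conejero--M\"uller--Peris. With this argument, your Steps~2 and~3 are no longer needed.
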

		Finally, we recall the definition of the Banach space of analytic functions of Herzog type \cite{Herzog}. Given $\rho>0$, let: 
		$$X_{\rho}=\left\{f:\mathds{R}\rightarrow\mathds{C}: f(x)=\sum_{n=0}^{\infty}\frac{a_n\rho^n}{n!}x^n, (a_n)_{n\geq 0}\in c_0(\mathds{N}_0)\right\}$$
		endowed with the norm $\|f\|=\sup_{n\geq 0}|a_n|.$ This space is isometrically isomorphic to $c_0(\mathbb{N}_0)=\{a:\mathds{N}_0\rightarrow \mathds{C}:\lim_{n\rightarrow\infty}|a_n|=0\}.$
\subsection{Functional calculus}
Let us introduce some useful results on the Riesz-Dunford functional calculus and spectral theory. The standard definitions and properties can be found in chapter VII of \cite{dunford-schwartz}. Throughout the work, we consider $\sigma(T)$ and $\sigma_P(T)$ to be the spectrum and the point spectrum of an operator.  
		\begin{definition} Let $X, Y$ be Banach spaces and $T:X\rightarrow Y$ a bounded operator. We denote $\mathcal{F}(T)$ as the set of holomorphic functions in a neighborhood of $\sigma(T)$. 
		\end{definition}
		\begin{definition} Let $f\in\mathcal{F}(T)$ and $U\supset\sigma(T)$ be an open set whose boundary $\partial U $ consists of a finite number of rectifiable curves. Suppose $U \cup \partial U$ is in the domain of analyticity of $f$. Then 
			$$f(T):=\frac{1}{2\pi i}\int_{\partial U }f(\lambda)R(\lambda; T)d\lambda,$$
			is well defined, where $R(\lambda; T):=(\lambda I-T)^{-1}$ is the resolvent operator defined in $\rho(T):= \mathds{C}\setminus\sigma(T)$, the resolvent set of $T$.
		\end{definition}
		As a consequence of the Cauchy integral theorem and  the fact that the resolvent operator is holomorphic, it follows that $f(T)$ is independent of the domain $U$. 
		\begin{theorem}\label{spectral mapping}\cite[Ch. VII.3, Th. 11]{dunford-schwartz}(Spectral Mapping Theorem). Let $f\in\mathcal{F}(T)$, then $\sigma_P(f(T))=f(\sigma_P(T))$ and $\sigma(f(T))=f(\sigma(T))$ .   
		\end{theorem}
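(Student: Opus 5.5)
The statement is the Spectral Mapping Theorem for the Riesz--Dunford calculus: $\sigma(f(T))=f(\sigma(T))$ and $\sigma_P(f(T))=f(\sigma_P(T))$ for $f\in\mathcal{F}(T)$. The plan is to prove both inclusions in each identity with a single device. Fix $\mu\in\mathds{C}$ and factor $f(\lambda)-f(\mu)=(\lambda-\mu)g(\lambda)$, where
$$g(\lambda)=\frac{f(\lambda)-f(\mu)}{\lambda-\mu}\quad (\lambda\neq\mu), \qquad g(\mu)=f'(\mu).$$
This $g$ is holomorphic on the domain of $f$. Because the functional calculus $f\mapsto f(T)$ is a multiplicative unital homomorphism, this gives
$$f(T)-f(\mu)I=(T-\mu I)\,g(T)=g(T)\,(T-\mu I),$$
with the two factors commuting. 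Multiplicativity is the standard consequence of the resolvent identity and Fubini; I would take it from \cite{dunford-schwartz}.

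For $f(\sigma(T))\subset\sigma(f(T))$, suppose $f(\mu)\notin\sigma(f(T))$ for some $\mu\in\sigma(T)$. Set $S=(f(T)-f(\mu)I)^{-1}$. Then $g(T)S$ is a right inverse and $Sg(T)$ a left inverse of $T-\mu I$, since everything commutes. So $T-\mu I$ is invertible, a contradiction.

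For the reverse inclusion, take $\nu\notin f(\sigma(T))$. Then $h(\lambda)=1/(f(\lambda)-\nu)$ is holomorphic on a neighbourhood of $\sigma(T)$, obtained by shrinking the domain of $f$, which is allowed because $\sigma(T)$ is compact and $f-\nu$ has no zeros on it. Multiplicativity gives $h(T)(f(T)-\nu I)=I$, so $\nu\notin\sigma(f(T))$.

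For the point spectrum, the inclusion $f(\sigma_P(T))\subset\sigma_P(f(T))$ is direct. If $Tx=\mu x$ with $x\neq0$, then $R(\lambda;T)x=x/(\lambda-\mu)$, and the Cauchy integral formula yields $f(T)x=f(\mu)x$.

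The reverse inclusion is the main obstacle. Take $f(T)x=\nu x$ with $x\neq0$. The strategy is to factor $f-\nu$ on the component of the domain meeting the relevant part of $\sigma(T)$. The zeros of $f-\nu$ in a neighbourhood of $\sigma(T)$ are finitely many, say $\mu_1,\dots,\mu_k$, counted with multiplicity. This holds provided $f$ is nonconstant on each component; if $f\equiv\nu$ on a component, I would split $T$ using the spectral projection of that component and treat it separately. Then write $f-\nu=\prod_j(\lambda-\mu_j)\,q(\lambda)$ with $q$ nonvanishing on $\sigma(T)$, so that $q(T)$ is invertible by the previous step. It follows that $\prod_j(T-\mu_jI)\,x=0$. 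Choose the first index at which the partial product applied to $x$ vanishes. The preceding nonzero vector is then an eigenvector of $T$ for some $\mu_j$, and $f(\mu_j)=\nu$.

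Note the zeros $\mu_j$ need not lie in $\sigma(T)$. That case is harmless: such factors $T-\mu_jI$ are invertible and can be cancelled, so the eigenvalue produced is a genuine point of $\sigma_P(T)$. I expect the only care needed is in this component and constancy bookkeeping.
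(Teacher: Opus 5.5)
The paper gives no proof of this statement; it only cites Dunford--Schwartz. Your argument for $\sigma(f(T))=f(\sigma(T))$ is the standard one: factor $f(\lambda)-f(\mu)=(\lambda-\mu)g(\lambda)$ and use that the calculus is a unital homomorphism with commuting values. Your proof of $f(\sigma_P(T))\subset\sigma_P(f(T))$ via the Cauchy formula is also correct. So is the reverse inclusion, obtained by factoring out the finitely many zeros of $f-\nu$ and taking the last nonvanishing partial product, provided $f$ is nonconstant on every component of its domain that meets $\sigma(T)$.

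The gap is the case you set aside: ``if $f\equiv\nu$ on a component, I would split $T$ using the spectral projection of that component and treat it separately.'' No such treatment exists, because the identity $\sigma_P(f(T))=f(\sigma_P(T))$ is false in that case. Let $D_1$ be a component of the domain on which $f\equiv\nu$, and let $E_1$ be the spectral projection for the nonempty spectral set $\sigma(T)\cap D_1$. Then $f(T)E_1=\nu E_1$, so every nonzero vector of $E_1X$ is an eigenvector of $f(T)$ for $\nu$. But the restriction of $T$ to $E_1X$ may have no eigenvalues at all.

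Concretely, let $F$ be the forward shift on $c_0(\mathds{N}_0)$, so $\sigma(F)=\overline{\mathds{D}}$ and $\sigma_P(F)=\emptyset$. With $f\equiv 0$ you get $\sigma_P(f(F))=\{0\}\neq\emptyset=f(\sigma_P(F))$. For an example where $f$ is not globally constant, take $T=F\oplus 3$ on $c_0(\mathds{N}_0)\oplus\mathds{C}$, with $f=0$ near $\overline{\mathds{D}}$ and $f(\lambda)=\lambda$ near $3$. Then $\sigma_P(f(T))=\{0,3\}$ but $f(\sigma_P(T))=\{3\}$.

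So the point-spectrum half of the statement needs the extra hypothesis that $f$ be nonconstant on each component of its domain meeting $\sigma(T)$; without it only the inclusion $f(\sigma_P(T))\subset\sigma_P(f(T))$ holds. With that hypothesis added, your proof is complete. The paper's later uses are unaffected. It applies the result only to nonconstant functions on connected domains (such as $e^{t\lambda}$ or affine maps of $F$) or to functions of the backward shift $B$. For $B$ the spectrum $\overline{\mathds{D}}$ is connected and $\sigma_P(B)=\mathds{D}$ is nonempty, so even a constant $f$ causes no harm there.
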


If we consider a bounded operator $A$ on a Banach space $X$, then by Theorem \ref{spectral mapping}, the uniformly continuous semigroup $\left(e^{tA}\right)_{t\geq 0}$ is such that
  \begin{equation*}
      \sigma(e^{At})= e^{\sigma(A)t};\quad \sigma_P(e^{At})= e^{\sigma_P(A)t}, \quad t\geq 0.
  \end{equation*}
  Let us now recall some applications of the spectral mapping theorem in the context of the backward shift operator. 
		\begin{theorem}\label{th:3}
			\cite[Ch. VII Cor. 6.6]{conway-functional}\label{espectro backward shift}
			Let $X$ be one of the sequence spaces $\ell^p(\mathds{N}_0)$, $1\leq p\leq\infty$ or $c_0(\mathds{N}_0)$ and $B:X\rightarrow X$ be the backward shift operator. Then $\sigma_{p}(B)=\mathds{D}$, $\sigma_{c}(B)=\mathds{T}$ and $\sigma(B)=\overline{\mathds{D}}$. 
		\end{theorem}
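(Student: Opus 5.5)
The plan is to work directly with sequences. The space $X$ is one of $\ell^p(\mathds{N}_0)$, $1\leq p\leq\infty$, or $c_0(\mathds{N}_0)$, and $B$ acts by $(Bx)_n=x_{n+1}$. The argument has three steps: find the eigenvectors of $B$ to get $\sigma_p(B)$; use $\|B\|=1$ together with closedness of the spectrum to get $\sigma(B)=\overline{\mathds{D}}$; and show that points of the unit circle $\mathds{T}$ are not eigenvalues but still lie in the continuous spectrum.

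\textbf{Point spectrum.} Suppose $Bx=\lambda x$. Then $x_{n+1}=\lambda x_n$ for every $n$, so $x=x_0(1,\lambda,\lambda^2,\dots)$. We have $x\in X\setminus\{0\}$ exactly when the geometric sequence $(\lambda^n)_n$ lies in $X$.
\begin{itemize}
\item For $\ell^p$ with $p<\infty$, and for $c_0$, this holds if and only if $|\lambda|<1$.
\item For $\ell^\infty$, it holds if and only if $|\lambda|\leq 1$.
\end{itemize}
So $\sigma_p(B)=\mathds{D}$ in all cases except $\ell^\infty$. I expect $\ell^\infty$ to be the main obstacle: there the constant sequence and $(\lambda^n)$ with $|\lambda|=1$ are genuine eigenvectors, so $\sigma_p(B)=\overline{\mathds{D}}$. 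I would therefore restrict the stated claim about $\sigma_p$ and $\sigma_c$ to $\ell^p$ with $p<\infty$ and to $c_0$, and treat $\ell^\infty$ as an exceptional case in a remark. The only case actually used later is $c_0$, through the Herzog spaces.

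\textbf{Whole spectrum.} Since $\|B\|=1$, the spectral radius bound gives $\sigma(B)\subset\overline{\mathds{D}}$. The spectrum is closed and contains $\mathds{D}$, hence $\sigma(B)=\overline{\mathds{D}}$.

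\textbf{Continuous spectrum.} Fix $|\lambda|=1$. By the point-spectrum computation, $\lambda I-B$ is injective when $p<\infty$ or $X=c_0$, and $\lambda\in\sigma(B)$. It remains to show that $\lambda I-B$ has dense range, so that $\lambda$ is not in the residual spectrum. I would do this by duality.
\begin{itemize}
\item For $c_0$ and for $\ell^p$ with $1<p<\infty$, the Banach adjoint of $B$ is the forward shift $S$ on $\ell^1$ or $\ell^{q}$. The equation $S y=\lambda y$ forces $y_0=0$ and then $y\equiv 0$ recursively. So $\ker(\lambda I-S)=\{0\}$, and hence the range of $\lambda I-B$ is dense.
\item For $\ell^1$, the adjoint acts on $\ell^\infty$, but it is still the forward shift, and the same recursion $y_0=0,\ y_{n}=\lambda^{-1}\cdot 0$ kills every eigenvector. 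So the argument goes through unchanged.
\end{itemize}
Therefore $\sigma_c(B)=\mathds{T}$ in the non-$\ell^\infty$ cases. One could alternatively cite \cite{conway-functional} directly, as the statement does.
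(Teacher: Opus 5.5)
Your argument is correct. The paper gives no proof of its own; it simply cites Conway. Your route is therefore genuinely different: a self-contained computation. It uses the eigenvectors $(\lambda^n)_n$ for $\sigma_p$, and $\|B\|=1$ plus closedness of the spectrum for $\sigma$. For $|\lambda|=1$ it uses triviality of $\ker(\lambda I-S)$ for the adjoint forward shift $S$ on the dual. That gives dense range of $\lambda I-B$, so $\mathds{T}$ lies in $\sigma_c$ rather than $\sigma_r$.

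The citation is shorter, but your direct proof exposes something the printed statement gets wrong. You are right that it fails on $\ell^\infty(\mathds{N}_0)$. There $(\lambda^n)_n$ is a bounded eigenvector for every $|\lambda|\le 1$, so $\sigma_p(B)=\overline{\mathds{D}}$ and $\sigma_c(B)=\emptyset$; only $\sigma(B)=\overline{\mathds{D}}$ survives. Restricting the $\sigma_p$ and $\sigma_c$ claims to $c_0$ and to $\ell^p$ with $p<\infty$ is the correct fix. Your handling of $\ell^1$, whose dual is $\ell^\infty$, is also fine, since the forward shift has no eigenvalues on any $\ell^q$, $1\le q\le\infty$.

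One side remark in your proposal is inaccurate: $c_0$ is not the only case the paper uses. Sections 4 and 5 work on $\ell^p(\mathds{N}_0)$ with $1\le p<\infty$, which your corrected statement covers. In the backward-difference subsection, the paper also passes to the dual $X^*=\ell^q$ with $1\le q\le\infty$. For $X=\ell^1$ this is exactly your exceptional $\ell^\infty$ case, so the formula $\sigma_P((\Delta_h^-)^*)=B(1/h,1/h)$ there should read $\overline{B(1/h,1/h)}$. This does not damage the non-hypercyclicity argument, which only needs the eigenvector $(1,z_0,z_0^2,\dots)$ with $z_0\in\mathds{D}$. It does show that the $\ell^\infty$ case genuinely surfaces in the paper.
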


\begin{theorem}\cite[Ch. 4, Th. 4.43]{Alfred}\label{criterio-backward}
    Let $\varphi$ be a nonconstant holomorphic function on a neighborhood of $\overline{\mathds{D}}$. Then the following assertions are equivalent: 
    \begin{enumerate}[(i)]
        \item $\varphi(B)$ is chaotic;
        \item $\varphi(\mathds{D})\cap\mathds{T}\neq \emptyset;$
        \item $\varphi(B)$ has a nontrivial periodic point. 
    \end{enumerate}
\end{theorem}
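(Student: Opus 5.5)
The plan is to prove the cycle (i)$\Rightarrow$(iii)$\Rightarrow$(ii)$\Rightarrow$(i). It rests on the explicit eigenvectors of $B$: for $|\lambda|<1$ put $e_\lambda:=(\lambda^k)_{k\ge0}\in X$, so that $Be_\lambda=\lambda e_\lambda$. By Theorem~\ref{espectro backward shift}, $\sigma(B)=\overline{\mathds{D}}$, and since $\varphi$ is holomorphic on a neighborhood of $\overline{\mathds{D}}$, the operator $\varphi(B)$ is well defined by the Riesz--Dunford calculus.

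\textbf{(i)$\Rightarrow$(iii).} This is immediate: a chaotic operator has a dense set of periodic points, and $X\neq\{0\}$, so some periodic point is nonzero.

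\textbf{(iii)$\Rightarrow$(ii).} Suppose $\varphi(B)^N x=x$ with $x\neq0$. Then $1\in\sigma_P(\varphi(B)^N)=\sigma_P\big((\varphi^N)(B)\big)=\varphi^N(\sigma_P(B))$, using Theorem~\ref{spectral mapping} applied to $\varphi^N$. Since $\sigma_P(B)=\mathds{D}$, there is $\lambda\in\mathds{D}$ with $\varphi(\lambda)^N=1$, so $\varphi(\lambda)\in\mathds{T}$ and $\varphi(\mathds{D})\cap\mathds{T}\neq\emptyset$.

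\textbf{(ii)$\Rightarrow$(i).} This is the substantive direction, and I would apply Theorem~\ref{eigenvector-criterion operador}.
\begin{enumerate}[(a)]
\item Set $U:=\varphi(\mathds{D})$. Because $\varphi$ is nonconstant, the open mapping theorem makes $U$ open, and it is connected. By (ii), $U\cap\mathds{T}\neq\emptyset$.
\item We need an eigenvector field parametrised by $\mu\in U$, but we only have $e_\lambda$ with $\varphi(e_\lambda)$-eigenvalue $\varphi(\lambda)$. That $\varphi(B)e_\lambda=\varphi(\lambda)e_\lambda$ follows from the resolvent identity $R(z;B)e_\lambda=(z-\lambda)^{-1}e_\lambda$ and the Cauchy formula.
\item To build a local inverse, pick $\lambda_0\in\mathds{D}$ with $\varphi(\lambda_0)\in\mathds{T}$ and $\varphi'(\lambda_0)\neq0$. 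This is possible because the zeros of $\varphi'$ are isolated, while $\varphi^{-1}(\mathds{T})\cap\mathds{D}$ is the preimage of a curve meeting the open set $U$ and so contains non-isolated points.
\item Let $\psi$ be a holomorphic local inverse of $\varphi$ on a small disc $V$ around $\varphi(\lambda_0)$, and define $G(\mu):=e_{\psi(\mu)}$. Since $\lambda\mapsto e_\lambda$ is holomorphic into $X$ (its power series is $\sum_k \lambda^k\epsilon_k$), $G$ is a holomorphic eigenvector field on $V$ with $V\cap\mathds{T}\neq\emptyset$.
\end{enumerate}

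It remains to show that $Y:=\overline{\operatorname{span}}\{e_\lambda:\lambda\in\psi(V)\}$ is all of $X$, so that the criterion yields full chaos. Let $x^*\in X^*$ annihilate $Y$; $x^*$ is identified with a sequence $(y_k)$ in $\ell^1$ or $\ell^q$, or more generally one has bounded $x^*(\epsilon_k)$. Then $F(\lambda):=x^*(e_\lambda)=\sum_k x^*(\epsilon_k)\lambda^k$ is holomorphic on $\mathds{D}$ and vanishes on the open set $\psi(V)$. Hence $F\equiv0$, so $x^*(\epsilon_k)=0$ for all $k$, and $x^*=0$ because the unit vectors span a dense subspace (for $c_0$ and $\ell^p$ with $p<\infty$). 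By Hahn--Banach, $Y=X$, and Theorem~\ref{eigenvector-criterion operador} gives that $\varphi(B)$ is chaotic.

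\textbf{Main obstacle.} The hardest part is this density step, together with the case $p=\infty$. There $\ell^\infty$ is nonseparable, so density of the span fails and chaos cannot hold; I would restrict the statement to $c_0$ and $\ell^p$ with $1\le p<\infty$.
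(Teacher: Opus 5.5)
The paper gives no proof of this statement. It is quoted from Grosse-Erdmann--Peris, where the ambient space is $c_0(\mathds{N}_0)$ or $\ell^p(\mathds{N}_0)$ with $1\le p<\infty$. Your restriction to these spaces is therefore the correct reading, and it is genuinely needed. On $\ell^\infty$ one has $\sigma_P(B)=\overline{\mathds{D}}$ and $B(1,1,1,\dots)=(1,1,1,\dots)$, so $\varphi(z)=z$ satisfies (iii) but not (ii).

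On the correct spaces your proof is sound; two small remarks:
\begin{itemize}
\item In (iii)$\Rightarrow$(ii), the identity $\sigma_P\big((\varphi^N)(B)\big)=\varphi^N(\mathds{D})$ is legitimate because $\varphi^N$ is nonconstant on a disc. Theorem~\ref{spectral mapping} as stated in the paper omits this hypothesis, and without it the point-spectrum version fails, e.g.\ for constant $f$.
\item Step (c) is correct but can be said more cleanly. $\varphi'$ has only finitely many zeros in $\overline{\mathds{D}}$, so $\varphi$ has finitely many critical values there, while $\varphi(\mathds{D})\cap\mathds{T}$ is a nonempty relatively open subset of $\mathds{T}$. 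Pick $\mu_0$ in it that is not a critical value; then every $\lambda_0\in\varphi^{-1}(\mu_0)\cap\mathds{D}$ has $\varphi'(\lambda_0)\neq0$.
\end{itemize}

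Your route differs from the usual proof of this result in how the eigenvectors are organised. The standard argument keeps the parameter $\lambda\in\mathds{D}$ and never inverts $\varphi$. Since $\varphi(\mathds{D})$ is open and meets $\mathds{T}$, each of the three sets $\{\lambda\in\mathds{D}:|\varphi(\lambda)|<1\}$, $\{\lambda\in\mathds{D}:|\varphi(\lambda)|>1\}$ and $\{\lambda\in\mathds{D}:\varphi(\lambda)\ \text{is a root of unity}\}$ has an accumulation point in $\mathds{D}$. For the last set, localise to a small closed disc $\overline{B(\lambda_1,\varepsilon)}\subset\mathds{D}$ around a point with $|\varphi(\lambda_1)|=1$. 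Your Hahn--Banach/identity-theorem argument then shows that the corresponding $e_\lambda$ span dense subspaces. The Godefroy--Shapiro criterion gives hypercyclicity (in fact mixing), and the third set gives dense periodic points.

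You instead route everything through Theorem~\ref{eigenvector-criterion operador}, which needs an eigenvector field holomorphic in the eigenvalue itself; that is what forces the detour through a non-critical point and a local inverse $\psi$. What your version buys is uniformity with the rest of the paper, whose proofs of Theorems~\ref{caos matriz A} and~\ref{caos matriz semigrupos} use exactly this implicit-function reparametrisation. The standard version avoids all critical-point bookkeeping and yields mixing as a by-product.
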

\subsection{Polynomial resultants and discriminants}
In this section we introduce some classical notions on polynomial functions that will be of further use in this work. Throughout this work $\mathds{K}$ will denote either $\mathds{C}$ or $\mathds{R}$.

\begin{definition} Let $p, q \in \mathds{K}[x]$ be two polynomials of degree $n$ and $m$, respectively, given by
$$p(x) = a_n x^n + a_{n-1} x^{n-1} + \dots + a_0, \quad a_n \neq 0,$$
$$q(x) = b_m x^m + b_{m-1} x^{m-1} + \dots + b_0, \quad b_m \neq 0.$$
The \textit{resultant} of $p$ and $q$, denoted by $\textnormal{res}(p, q)$, is defined as the determinant of the $(n+m) \times (n+m)$ Sylvester matrix:
$$\text{res}(p, q) := \det 
\begin{pmatrix}
a_n      & 0        & \cdots & 0        & b_m      & 0        & \cdots & 0\\
a_{n-1}  & a_n      & \cdots & 0        & b_{m-1}  & b_m      & \cdots & 0\\
\vdots   & \vdots   & \ddots & \vdots   & \vdots   & \vdots   & \ddots & \vdots\\
a_0      & a_1      & \cdots & a_n      & b_0      & b_1      & \cdots & b_m\\
0        & a_0      & \cdots & a_{n-1}  & 0        & b_0      & \cdots & b_{m-1}\\
\vdots   & \vdots   & \ddots & \vdots   & \vdots   & \vdots   & \ddots & \vdots\\
0        & 0        & \cdots & a_0      & 0        & 0        & \cdots & b_0
\end{pmatrix}.$$
Where the first $m$ columns contain the coefficients of $p$ and the remaining $n$ columns contain the coefficients of $q$. 
\end{definition}
A fundamental property of the resultant is that it characterizes the existence of common roots. Specifically, $\text{res}(p, q) = 0$ if and only if $p$ and $q$ have a common root.\\

A closely related notion is the discriminant of a polynomial. 
\begin{definition}
    Let $p=a_nx^n+\cdots +a_0\in \mathds{K}[x]$, with $a_n\neq0$ and $n\in\mathds{N}$. Then the discriminant of $p$, denoted as $\textnormal{disc}(p)$, is defined as: 
    $$\textnormal{disc}(p):=\frac{(-1)^{n(n-1)/2}}{a_n}\textnormal{res}(p, p'),$$
    where $p'(x)=na_nx^{n-1}+\cdots +a_1$ is the derivative of $p$.
\end{definition}
By the definition of the discriminant, we note that $\textnormal{disc}(p)=0$ if and only if $\textnormal{res}(p, p')=0$. This indicates that $p$ and $p'$ have a common root, and therefore $p$ has a multiple root. 

\section{First-order in time case}
In this section, we characterize the dynamical behavior in terms of chaos of the solution semigroup of first-order in time equations of the form \eqref{caso ecuacion polinomial} and \eqref{caso ecuación racional}. In order to do so, we denote the map $\partial_x: X_{\rho}\rightarrow X_{\rho}$, as the derivative operator acting on a Herzog space $X_{\rho}$ of weight $\rho>0$. We note that by the isometric isomorphism between $X_{\rho}$ and $c_{0}(\mathds{N}_0)$, the operator $\partial_x$ is isometric to the operator $\rho B$ on $c_0(\mathds{N}_0)$. As a consequence, and by Theorem \ref{th:3} we have that
$$\sigma_P(\partial_x)=\sigma_P(\rho B)=\rho\mathds{D} \quad\text{ and }\quad \sigma(\partial_x)=\sigma(\rho B)=\rho\overline{\mathds{D}},$$
where $\mathds{D}:=\{z\in\mathds{C}: |z|<1\}$.

The following result establishes a characterization of chaos for $C_0$-semigroups generated by operators of the type $f(\partial_x)$, where $f$ is a holomorphic function on a neighborhood of $\sigma(\partial_x)=\rho\overline{\mathds{D}}$.

\begin{theorem}\label{equivalencias primer orden} Let $\rho>0$ and $f$ be a nonconstant holomorphic function on a neighborhood of $\rho\overline{\mathds{D}}$ and consider the semigroup $(T_t)_{t\geq 0}$ generated by the operator $f(\mathds{\partial}_x)$. Then the following assertions are equivalent: 
\begin{enumerate}[(i)]
    \item $f(\rho\mathds{D})\cap i\mathds{R}\neq \emptyset$. 
    \item $(T_t)_{t\geq 0}$ is Devaney chaotic in $X_{\rho}$. 
    \item Every autonomous discretization of $(T_t)_{t\geq 0}$ is Devaney chaotic in $X_{\rho}$. 
    \item Some autonomous discretization of $(T_t)_{t\geq 0}$ is Devaney chaotic in $X_{\rho}$. 
\end{enumerate}
\end{theorem}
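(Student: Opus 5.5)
We will reduce everything to the backward shift on $c_0(\mathds{N}_0)$ and then invoke Theorem \ref{criterio-backward}. Let $\Phi:X_\rho\to c_0(\mathds{N}_0)$ be the isometric isomorphism, so that $\Phi\partial_x\Phi^{-1}=\rho B$. The Riesz--Dunford functional calculus is compatible with this similarity. Hence $f(\partial_x)$ corresponds to $g(B)$, where $g(z):=f(\rho z)$ is holomorphic on a neighborhood of $\overline{\mathds{D}}$. Since $f(\partial_x)$ is bounded, the semigroup is uniformly continuous. Moreover $T_t=e^{tf(\partial_x)}$ corresponds to $e^{tg}(B)$, because the functional calculus is multiplicative. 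An autonomous discretization with step $t_0>0$ is the operator $T_{t_0}$, which is conjugate to $\varphi_{t_0}(B)$ with $\varphi_{t_0}:=e^{t_0 g}$. This function is nonconstant because $f$ is nonconstant.

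For (i)$\Rightarrow$(iii), fix $t_0>0$. By Theorem \ref{criterio-backward}, $T_{t_0}$ is chaotic if and only if $e^{t_0 f(\rho\mathds{D})}\cap\mathds{T}\neq\emptyset$. Since $|e^{w}|=1$ exactly when $w\in i\mathds{R}$, this holds if and only if $f(\rho\mathds{D})\cap i\mathds{R}\neq\emptyset$, that is, (i). The same equivalence gives (iv)$\Rightarrow$(i): if $T_{t_0}$ is chaotic for some $t_0$, then $\varphi_{t_0}(\mathds{D})\cap\mathds{T}\ne\emptyset$, and so (i) holds. The implication (iii)$\Rightarrow$(iv) is trivial. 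This closes the cycle among (i), (iii) and (iv).

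It remains to bring (ii) into the cycle. For (iii)$\Rightarrow$(ii), a periodic point of $T_{t_0}$ is a periodic point of the semigroup. A hypercyclic vector for $T_{t_0}$ has discrete orbit $\{T_{nt_0}x\}\subset\{T_tx:t\ge0\}$, so the continuous orbit is dense as well. For a direct route to (i)$\Rightarrow$(ii), we could instead apply Proposition \ref{subcaos semigrupos} with the eigenvector field $\lambda\mapsto\Phi^{-1}(\lambda^n)_n$, $\lambda\in\mathds{D}$, for $B$. We would compose it with a local holomorphic inverse of $g$ near a point where $g$ takes a value in $i\mathds{R}$ with $g'\neq0$. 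Completeness of the span then follows from the identity principle, but the route through Theorem \ref{criterio-backward} avoids this.

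For (ii)$\Rightarrow$(i), we use Theorem \ref{necessary condition chaos}: $\sigma_P(f(\partial_x))\cap i\mathds{R}$ must be infinite, in particular nonempty. By Theorem \ref{spectral mapping}, $\sigma_P(f(\partial_x))=f(\sigma_P(\partial_x))=f(\rho\mathds{D})$, which gives (i). The main point needing care is the point-spectrum part of the spectral mapping theorem as quoted, since in general only $f(\sigma_P)\subseteq\sigma_P(f(T))$ holds without further hypotheses. I would verify the reverse inclusion directly. If $f(\partial_x)u=\mu u$ with $u\neq0$, write $f(z)-\mu=p(z)h(z)$, where $p$ is a polynomial whose roots are the zeros of $f-\mu$ in $\rho\overline{\mathds{D}}$ and $h$ is zero-free there. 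Then $h(\partial_x)$ is invertible, so $p(\partial_x)u=0$, which forces some root $\lambda$ of $p$ to lie in $\sigma_P(\partial_x)=\rho\mathds{D}$. Hence $\mu=f(\lambda)\in f(\rho\mathds{D})$.
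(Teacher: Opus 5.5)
Your proposal is correct and follows essentially the same route as the paper. Like the paper, you conjugate $f(\partial_x)$ to $g(B)$ on $c_0(\mathds{N}_0)$, apply Theorem \ref{criterio-backward} to $e^{t_0 g}$ for the discretizations, and use Theorem \ref{necessary condition chaos} with the spectral mapping theorem for (ii)$\Rightarrow$(i). The only deviations are minor. You close the cycle by (iv)$\Rightarrow$(i) via the full equivalence in Theorem \ref{criterio-backward} instead of passing through (ii). You also verify directly the inclusion $\sigma_P(f(\partial_x))\subseteq f(\rho\mathds{D})$, which the paper simply quotes from the spectral mapping theorem, where it does need $f$ nonconstant.
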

\begin{proof}
   $(ii)\rightarrow(i)$. By the spectral Theorem we have that 
   $\sigma_P(f(\partial_x))=f(\rho\mathds{D}).$
   As a consequence, by Theorem \ref{necessary condition chaos} we have that if $(T_t)_{t\geq 0}$ is chaotic then 
$\sigma_P(f(\partial_x))\cap i\mathds{R}=f(\rho\mathds{D})\cap i\mathds{R}\neq\emptyset$. \\

$(i)\rightarrow (iii)$. Since $f(\partial_x)$ is a bounded operator in $X_{\rho}$ we have that $T_{t}=e^{t f(\partial_x)}$ for all $t\geq 0$. By the isometric isomorphism between  $X_{\rho}$ and $c_0(\mathds{N}_0)$ we have that $e^{t f(\partial_x)}$ is isometrically isomorphic to the operator $e^{t f(\rho B)}$ in $c_0(\mathds{N}_0)$. Hence by the spectral mapping Theorem we have $$\sigma_P(e^{t f(\rho B)})=e^{tf(\rho\mathds{D})} \quad\text{ for all }t\geq 0.$$
Finally by condition $(i)$ it follows that $e^{tf(\rho\mathds{D})}\cap \mathds{T}\neq \emptyset$ for all $t>0$ and therefore by Theorem \ref{criterio-backward} the operator $T_t=e^{tf(\partial_x)}$ is Devaney chaotic in $X_{\rho}$ for all $t>0$. \\

It is immediate to see that $(iii)\rightarrow(iv)$ and $(iv)\rightarrow(ii)$.
\end{proof}
\begin{remark}
    Let us observe that Theorem \ref{equivalencias primer orden} can be seen as the counterpart to Theorem \ref{criterio-backward} in the context of operator semigroups on a Herzog space. Moreover, we highlight the importance of conditions $(iii)$ and $(iv)$ in this theorem. While there exist chaotic semigroups whose autonomous discretizations are not all chaotic (see \cite{bayart-bermudez}), this is not the case for semigroups generated by operators of the type $f(\partial_x)$ on a Herzog space.
\end{remark}
The following corollaries follow as a consequence of Theorem \ref{equivalencias primer orden} and are particularly interesting in the analysis of chaos for the first-order in time case of equations \eqref{caso ecuacion polinomial} and \eqref{caso ecuación racional}.
\begin{corollary}\label{caos polinomial primer orden} Let $p$ be a nonconstant polynomial. Then there is some $\rho>0$ such that $p(\partial_x)$ generates a Devaney chaotic $C_0$-semigroup on $X_{\rho}$. 
\end{corollary}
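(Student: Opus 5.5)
By Theorem \ref{equivalencias primer orden}, applied with $f=p$, it suffices to find some $\rho>0$ such that $p(\rho\mathds{D})\cap i\mathds{R}\neq\emptyset$. A polynomial is entire, so it is holomorphic on a neighborhood of $\rho\overline{\mathds{D}}$ for every $\rho>0$. Thus the hypotheses of the theorem are met for every choice of $\rho$, and the whole task reduces to a statement about the image of the polynomial $p$.

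The key step is to show that a nonconstant polynomial attains a purely imaginary value somewhere in $\mathds{C}$. I will pick any point $w\in i\mathds{R}$, for instance $w=0$. By the fundamental theorem of algebra, the nonconstant polynomial $p(z)-w$ has a root $z_0\in\mathds{C}$, so $p(z_0)=w\in i\mathds{R}$.

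Next I will choose $\rho>|z_0|$; for concreteness, $\rho=|z_0|+1$. Then $z_0\in\rho\mathds{D}$, so $p(\rho\mathds{D})\cap i\mathds{R}\ni w$ is nonempty. The implication $(i)\Rightarrow(ii)$ of Theorem \ref{equivalencias primer orden} then gives that the semigroup $(e^{tp(\partial_x)})_{t\geq0}$ generated by $p(\partial_x)$ is Devaney chaotic on $X_\rho$. As a bonus, by $(iii)$ every autonomous discretization is chaotic as well.

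There is essentially no obstacle here. The only point needing care is that $\rho$ must be taken strictly larger than $|z_0|$, because the relevant set is the open disc $\rho\mathds{D}$, which equals the point spectrum of $\partial_x$. Also, since $p(\mathds{C})=\mathds{C}$, any $\rho$ exceeding the modulus of some root of $p-ic$ (for some $c\in\mathds{R}$) works. In particular, if $p$ has a root in $\rho\mathds{D}$, then chaos already holds on $X_\rho$.
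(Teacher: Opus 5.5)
Your proposal is correct and follows essentially the same route as the paper. You reduce to condition (i) of Theorem \ref{equivalencias primer orden}, use the Fundamental Theorem of Algebra to find $z_0$ with $p(z_0)\in i\mathds{R}$, and take $\rho>|z_0|$ so that $z_0\in\rho\mathds{D}$ and hence $p(\rho\mathds{D})\cap i\mathds{R}\neq\emptyset$.
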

\begin{proof}
    Without loss of generality, consider $p$ to be a polynomial function of degree $n$. Then for all $\rho>0$ we have: 
    $$p(\rho\mathds{D})=\left\{a_n\rho^nz^n+\dots+a_0, z\in\mathds{D}\right\}.$$
    Now take $\lambda\in i\mathds{R}$. By the Fundamental Theorem of Algebra there exists some $z_{\lambda}\in\mathds{C}$ such that $a_nz_{\lambda}^n+...+a_1z_{\lambda}+a_0=\lambda$. Moreover, for any $\rho>|z_{\lambda}|$ there exists some $z_0\in\mathds{D}$ such that $\rho z_0=z_{\lambda}$. As a consequence, $p(z_{\lambda})=p(\rho z_0)=\lambda\in i\mathds{R}$ and the conclusion follows by Theorem \ref{equivalencias primer orden}.
\end{proof}
\begin{corollary}\label{caos primer orden racional}
    Let $q, p$ be nonconstant polynomials of order $m$ and $n$ respectively. Then for all $\rho>0$ such that 
    $$\{z\in\mathds{C}: q(z)=0\}\subset \mathds{C}\setminus \rho\overline{\mathds{D}},$$ 
    the operator $f(\partial_x):=(p/q)(\partial_x)$ generates a chaotic semigroup on $X_\rho$ if and only if there are some $z_0\in\rho\mathds{D}$ and $ \lambda_0\in i\mathds{R}$ such that $p(z_0)=\lambda_0q(z_0)$. 
\end{corollary}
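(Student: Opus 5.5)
The plan is to reduce the statement to Theorem \ref{equivalencias primer orden} applied to the rational function $f=p/q$. The argument has three steps.

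\textbf{Step 1: $f$ is an admissible symbol.} Fix $\rho>0$ with all zeros of $q$ in $\mathds{C}\setminus\rho\overline{\mathds{D}}$. Since the zero set of $q$ is finite and closed, it lies at positive distance from the compact disc $\rho\overline{\mathds{D}}$. Hence there is $\varepsilon>0$ such that $q$ has no zeros on $(\rho+\varepsilon)\mathds{D}$. On that open neighborhood of $\rho\overline{\mathds{D}}=\sigma(\partial_x)$ the function $f=p/q$ is holomorphic. So $f\in\mathcal{F}(\partial_x)$, and $f(\partial_x)$ is defined by the Riesz--Dunford calculus.

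It also agrees with $p(\partial_x)\,q(\partial_x)^{-1}$. Indeed, $q(\partial_x)$ is invertible because $\sigma(q(\partial_x))=q(\rho\overline{\mathds{D}})\not\ni 0$ by Theorem \ref{spectral mapping}, and the functional calculus is multiplicative. This identification matters only because it is the operator arising from the PDE \eqref{caso ecuación racional} with $n=1$.

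Before invoking the theorem, I would dispose of the case where $f$ is constant, that is, $p=cq$. The statement presumably intends $p/q$ nonconstant, and I would add that as an implicit assumption or note the degenerate case separately.

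\textbf{Step 2: apply the first-order theorem.} With $f$ nonconstant and holomorphic near $\rho\overline{\mathds{D}}$, Theorem \ref{equivalencias primer orden} gives:
\[
\text{the semigroup generated by } f(\partial_x) \text{ is chaotic on } X_\rho \iff f(\rho\mathds{D})\cap i\mathds{R}\neq\emptyset .
\]

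\textbf{Step 3: translate the spectral condition.} The condition $f(\rho\mathds{D})\cap i\mathds{R}\neq\emptyset$ means there are $z_0\in\rho\mathds{D}$ and $\lambda_0\in i\mathds{R}$ with $p(z_0)/q(z_0)=\lambda_0$. Because $q(z_0)\neq0$ on $\rho\mathds{D}$, this is equivalent to $p(z_0)=\lambda_0 q(z_0)$. Conversely, suppose $p(z_0)=\lambda_0q(z_0)$ with $z_0\in\rho\mathds{D}$. Then $q(z_0)\neq 0$ by the hypothesis on the zeros of $q$, and dividing gives $f(z_0)=\lambda_0$. This yields the stated equivalence.

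The only real obstacle is the bookkeeping in Step 1: the strict containment of the zeros of $q$ outside the closed disc is what provides holomorphy on a neighborhood of $\sigma(\partial_x)$. Everything else is a direct invocation of earlier results.
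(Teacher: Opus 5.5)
Your proposal is correct and is essentially the paper's own argument: the paper likewise only observes that $p/q$ is holomorphic on a neighborhood of $\sigma(\partial_x)=\rho\overline{\mathds{D}}$ and then invokes Theorem~\ref{equivalencias primer orden}, leaving the translation $f(z_0)=\lambda_0 \iff p(z_0)=\lambda_0 q(z_0)$ implicit. Your caveat about $p=cq$ is more than bookkeeping. It is a missing hypothesis that the paper overlooks: if $c\in i\mathds{R}\setminus\{0\}$, the condition holds with $\lambda_0=c$, yet $e^{tc}I$ has one-dimensional orbits and so is not hypercyclic on the infinite-dimensional space $X_\rho$. The statement therefore needs $p/q$ nonconstant, i.e.\ $p$ not a scalar multiple of $q$.
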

\begin{proof}
    By the hypothesis, it follows that $f$ is holomorphic on a neighborhood of $\sigma(\partial_x)=\rho\overline{\mathds{D}}$  and therefore the operator $f(\partial_x)$ is well defined on $X_{\rho}$. The conclusion follows as an immediate consequence of Theorem \ref{equivalencias primer orden}.   
\end{proof}
We also highlight the following particular case. 
\begin{corollary}\label{caos primer orden racional caso particular}
    Let $p$ be a nonconstant polynomial, $k\in\mathds{N}$, $\alpha\in\mathds{C}$. Then for each $0<\rho<\frac{1}{|\alpha|^{1/k}}$ the operator $f(\partial_x):=(I-\alpha \partial_x)^{-1}p(\partial_x^k)$ generates a chaotic semigroup on $X_\rho$ if and only if there are some $z_0\in\rho\mathds{D}, \lambda_0\in i\mathds{R}$ such that $p(z_0)=\lambda_0(1-\alpha z_0^k)$. 
\end{corollary}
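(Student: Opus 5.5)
The plan is to obtain this as a direct specialization of Corollary~\ref{caos primer orden racional}, with the denominator polynomial $q(z)=1-\alpha z^k$. The condition in the statement, $p(z_0)=\lambda_0(1-\alpha z_0^k)$, shows that the relevant rational symbol is $f(z)=p(z)/(1-\alpha z^k)$. So I will read the operator as $(I-\alpha\partial_x^k)^{-1}p(\partial_x)$, that is, as $f(\partial_x)$ in the sense of the Riesz--Dunford functional calculus. The placement of the exponent $k$ in the displayed operator is a notational issue, and the proof concerns the symbol $p/q$.

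\textbf{Step 1 (location of the zeros of $q$).} Assume first $\alpha\neq 0$. The zeros of $q(z)=1-\alpha z^k$ are the $k$-th roots of $1/\alpha$, and all of them have modulus $|\alpha|^{-1/k}$. The hypothesis $0<\rho<|\alpha|^{-1/k}$ therefore gives
$$\{z\in\mathds{C}: q(z)=0\}\subset\mathds{C}\setminus\rho\overline{\mathds{D}}.$$
This is exactly the hypothesis of Corollary~\ref{caos primer orden racional}. Hence $f=p/q$ is holomorphic on a neighborhood of $\rho\overline{\mathds{D}}=\sigma(\partial_x)$, and $f(\partial_x)$ is a well-defined bounded operator on $X_\rho$. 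By the multiplicativity of the functional calculus, $f(\partial_x)=q(\partial_x)^{-1}p(\partial_x)=(I-\alpha\partial_x^k)^{-1}p(\partial_x)$.

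\textbf{Step 2 (apply the corollary).} Corollary~\ref{caos primer orden racional}, which rests on Theorem~\ref{equivalencias primer orden}, now says that the generated semigroup is chaotic if and only if $p(z_0)=\lambda_0 q(z_0)$ for some $z_0\in\rho\mathds{D}$ and some $\lambda_0\in i\mathds{R}$. Substituting $q(z_0)=1-\alpha z_0^k$ gives exactly the stated condition.

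\textbf{Step 3 (degenerate cases).} If $\alpha=0$, then $q\equiv 1$ is constant, so Corollary~\ref{caos primer orden racional} (which assumes $q$ nonconstant) does not literally apply. In this case the bound on $\rho$ is read as $\rho<\infty$, and I apply Theorem~\ref{equivalencias primer orden} directly to $f=p$. Theorem~\ref{equivalencias primer orden} also requires $f$ to be nonconstant. This holds because $f=p/q$ constant would force $p=cq$, which is impossible for $\alpha=0$ since $p$ is nonconstant. For $\alpha\neq 0$, if $p=cq$ happened, the semigroup is trivially $e^{ct}I$. That case is excluded or handled separately: it is non-chaotic, and the condition would read $c\in i\mathds{R}$. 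I would flag this edge case rather than belabor it.

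The only real obstacle is the bookkeeping in Step 1: checking that the circle of zeros of $1-\alpha z^k$ lies strictly outside $\rho\overline{\mathds{D}}$, and matching the operator with its symbol. Everything else is an immediate consequence of the earlier corollary.
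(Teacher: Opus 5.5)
Your proposal is correct and follows essentially the paper's route: the paper gives no separate proof and obtains the statement as the special case $q(z)=1-\alpha z^k$ of Corollary~\ref{caos primer orden racional}, hence of Theorem~\ref{equivalencias primer orden}, with the operator read, as you do, as $(I-\alpha\partial_x^k)^{-1}p(\partial_x)$. The degenerate case you flag is a genuine defect of the statement, not just bookkeeping: for $\alpha\neq 0$ and $p=c(1-\alpha z^k)$ with $c\in i\mathds{R}\setminus\{0\}$, the condition holds for every $z_0\in\rho\mathds{D}$, yet the semigroup $e^{ct}I$ is not even hypercyclic, so the hypothesis that $p/q$ is nonconstant must be added (Corollary~\ref{caos primer orden racional} has the same omission).
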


We can apply the previous results to the analysis of first order in time partial derivatives. Since $\partial_x$ is a bounded operator on $X_{\rho}$ for all $\rho>0$, we can write the first-order in time case of equation \eqref{caso ecuacion polinomial} as the following abstract Cauchy problem: 
\begin{equation}\label{Cauchy primer orden polinomial}
    \begin{cases}
        \partial_t u=p(\partial_x)u\\
        
        u(0)=u_0\in X_{\rho}
    \end{cases},
\end{equation}
where $p(z):=-\sum_{j=0}^ma_{j}z^j$ is a polynomial function and $p(\partial_x)$ is a bounded operator on $X_{\rho}$ for all $\rho>0$. The following result is an immediate consequence of Corollary \ref{caos polinomial primer orden}. 
\begin{corollary}
Let a PDE be first order in time and without mixed derivatives, of the form \eqref{Cauchy primer orden polinomial}. Then there exists $\rho_0>0$ such that the associated solution semigroup is Devaney chaotic on $X_{\rho}$ for every $\rho>\rho_0$.
\end{corollary}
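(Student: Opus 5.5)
The argument reduces the statement to Theorem \ref{equivalencias primer orden} applied to $f=p$, where $p(z)=-\sum_{j=0}^m a_j z^j$. The one point beyond Corollary \ref{caos polinomial primer orden} is that we need a threshold $\rho_0$ such that chaos holds for \emph{every} $\rho>\rho_0$, not just for some $\rho$. This follows because condition (i) of Theorem \ref{equivalencias primer orden} is monotone in $\rho$: the discs $\rho\mathds{D}$ are nested, so if $\rho'>\rho$ then $p(\rho\mathds{D})\subset p(\rho'\mathds{D})$. Hence once $p(\rho_0\mathds{D})\cap i\mathds{R}\neq\emptyset$ holds for some $\rho_0$, it holds for all $\rho\geq\rho_0$.

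I assume $p$ is nonconstant, i.e. some $a_j\neq 0$ with $j\geq 1$. If $p$ were constant, $p(\partial_x)$ would be a scalar multiple of the identity, and the statement would fail. I would note explicitly that the genuinely first-order PDE case is understood to involve at least one spatial derivative.

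The steps are as follows.

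\textbf{Step 1.} Fix any $\lambda\in i\mathds{R}$, for instance $\lambda=0$. By the Fundamental Theorem of Algebra, the nonconstant polynomial $p(z)-\lambda$ has a root $z_\lambda\in\mathds{C}$.

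\textbf{Step 2.} Set $\rho_0:=|z_\lambda|$. For every $\rho>\rho_0$ we have $z_\lambda\in\rho\mathds{D}$, so $\lambda=p(z_\lambda)\in p(\rho\mathds{D})\cap i\mathds{R}$.

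\textbf{Step 3.} $p$ is entire, so it is holomorphic on a neighborhood of $\rho\overline{\mathds{D}}$, and $p(\partial_x)$ is a bounded operator on $X_\rho$ generating $T_t=e^{tp(\partial_x)}$. This semigroup solves \eqref{Cauchy primer orden polinomial}.

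\textbf{Step 4.} The implication (i)$\Rightarrow$(ii) of Theorem \ref{equivalencias primer orden} now gives Devaney chaos of the solution semigroup on $X_\rho$ for every $\rho>\rho_0$.

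I do not expect a real obstacle. The only points needing care are the nonconstancy hypothesis and the choice of the strict inequality $\rho>|z_\lambda|$, which ensures $z_\lambda$ lies in the open disc. If $\rho_0=0$, meaning $p$ already vanishes at the origin or $a_0\in i\mathds{R}$, the argument still works with any positive $\rho$, since $\rho_0$ is just a lower bound.
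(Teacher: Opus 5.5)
Your proof is correct and follows the paper's own route: the paper derives this corollary directly from Corollary~\ref{caos polinomial primer orden}, whose proof is exactly your Steps 1--2 (any $\rho>|z_\lambda|$ gives $\lambda\in p(\rho\mathds{D})\cap i\mathds{R}$), followed by Theorem~\ref{equivalencias primer orden}. One cosmetic fix: set $\rho_0:=\max\{|z_\lambda|,1\}$ so that $\rho_0>0$ as the statement requires; your observation that $p$ must be nonconstant is indeed an implicit hypothesis of the statement.
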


Now for the case of equation \eqref{caso ecuación racional}, we define the polynomials $p(z):=-\sum_{j=0}^ma_{0,j}z^j$ and $q(z):=1+\sum_{j=1}^ma_{1, j} z^j$. Applying functional calculus and recalling that $\sigma(\partial_x)=\overline{\rho\mathds{D}}$, we have that for all $\rho>0$ such that 
$$\{z\in\mathds{C}: q(z)=0\}\subset \mathds{C}\setminus\overline{\rho\mathds{D}},$$
the operator $q(\partial_x)^{-1}=(1/q)(\partial_x)$ is well defined on $X_{\rho}$. Moreover, on such conditions for the weight of the space, we can write the first-order in time case of equation \eqref{caso ecuación racional} as the following abstract Cauchy problem: 
\begin{equation}\label{Cuchy primer orden racional}
    \begin{cases}
        &\partial_t u=(p/q)(\partial_x)u\\
        &u(0)=u_0\in X_{\rho}
    \end{cases}.
\end{equation}
Applying Corollaries \ref{caos primer orden racional} and \ref{caos primer orden racional caso particular} we get the following result. 
\begin{corollary}
    Let a PDE be first order in time and with mixed derivatives as in equation \eqref{caso ecuación racional}. Then for all $\rho>0$ such that 
    $$\{z\in\mathds{C}: q(z)=0\}\subset \mathds{C}\setminus \rho\overline{\mathds{D}},$$ 
    the associated solution semigroup is chaotic on $X_\rho$ if and only if there are some $z_0\in\rho\mathds{D}$ and $ \lambda_0\in i\mathds{R}$ such that $p(z_0)=\lambda_0q(z_0)$. \\

    Moreover, if $q(\partial_x):=I-\alpha \partial_x^k$ for some $\alpha\in\mathds{C}$ and $k\in\mathds{N}$. Then, for each $0<\rho<\frac{1}{|\alpha|^{1/k}}$, the associated solution semigroup is chaotic on $X_{\rho}$ if and only if there are some $z_0\in\rho\mathds{D}$, $\lambda_0\in i\mathds{R}$ such that $p(z_0)=\lambda_0(1-\alpha z_0^k)$. 
\end{corollary}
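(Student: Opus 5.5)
The plan is to reduce the statement to Corollaries \ref{caos primer orden racional} and \ref{caos primer orden racional caso particular}, after confirming that the first-order in time version of \eqref{caso ecuación racional} is exactly the abstract Cauchy problem \eqref{Cuchy primer orden racional} on $X_\rho$.

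\textbf{Step 1: rewrite the equation.} For $n=1$, equation \eqref{caso ecuación racional} reads $q(\partial_x)\partial_t u = p(\partial_x)u$, with $p(z)=-\sum_{j=0}^m a_{0,j}z^j$ and $q(z)=1+\sum_{j=1}^m a_{1,j}z^j$.

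\textbf{Step 2: invert $q(\partial_x)$.} Suppose $\rho>0$ is such that $q$ has no zeros in $\rho\overline{\mathds{D}}=\sigma(\partial_x)$. Then $1/q$ is holomorphic on a neighborhood of $\sigma(\partial_x)$, so $(1/q)(\partial_x)$ is a well-defined bounded operator. Multiplicativity of the Riesz--Dunford calculus gives
$$(1/q)(\partial_x)\,q(\partial_x)=q(\partial_x)\,(1/q)(\partial_x)=I.$$
Hence $q(\partial_x)$ is invertible with inverse $(1/q)(\partial_x)$. Applying it to the equation yields $\partial_t u=(p/q)(\partial_x)u$, where again by multiplicativity $(p/q)(\partial_x)=(1/q)(\partial_x)p(\partial_x)$. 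This is a bounded generator, so the solution semigroup is $e^{t(p/q)(\partial_x)}$.

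\textbf{Step 3: apply the general characterization.} If $p/q$ is nonconstant, Corollary \ref{caos primer orden racional} (resting on Theorem \ref{equivalencias primer orden}) gives chaos if and only if $(p/q)(\rho\mathds{D})\cap i\mathds{R}\neq\emptyset$. This is exactly the existence of $z_0\in\rho\mathds{D}$ and $\lambda_0\in i\mathds{R}$ with $p(z_0)=\lambda_0 q(z_0)$, since $q(z_0)\neq 0$.

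\textbf{Step 4: the special case $q(z)=1-\alpha z^k$.} The zeros of $q$ have modulus $|\alpha|^{-1/k}$. So for $0<\rho<|\alpha|^{-1/k}$ they lie outside $\rho\overline{\mathds{D}}$, and the previous step applies directly. The condition becomes $p(z_0)=\lambda_0(1-\alpha z_0^k)$, matching Corollary \ref{caos primer orden racional caso particular}. The case $\alpha=0$ reduces to Corollary \ref{caos polinomial primer orden}-type reasoning, with no restriction on $\rho$.

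\textbf{Main obstacle: the constant case.} The point needing care is that Theorem \ref{equivalencias primer orden} assumes $f$ is nonconstant. If $p/q$ is constant, say equal to $c$, then the semigroup is $e^{tc}I$, which is never chaotic on the infinite-dimensional space $X_\rho$. In that case the stated equivalence holds only if no $z_0$ satisfies the condition, i.e.\ only if $c\notin i\mathds{R}$. So I would treat $p/q$ constant as excluded, as implicitly done in the cited corollaries; the remaining steps are routine.
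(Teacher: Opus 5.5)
Your proposal is correct and follows the paper's route: the paper rewrites the $n=1$ case as the bounded-generator problem \eqref{Cuchy primer orden racional} and invokes Corollaries \ref{caos primer orden racional} and \ref{caos primer orden racional caso particular}, i.e.\ Theorem \ref{equivalencias primer orden} applied to $f=p/q$. Your caveat about the degenerate case is accurate and is not addressed in the paper: if $p/q\equiv c\in i\mathds{R}$ (for instance $p\equiv 0$, which gives the identity semigroup), the stated condition holds but $e^{tc}I$ is not chaotic, so the hypothesis actually needed is that $p/q$ is nonconstant, not that $p$ and $q$ are each nonconstant.
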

\section{$n$-th-order in time case}
In this section, we study the spectrum and dynamical properties of a block-operator matrix depending on the backward shift operator on $\ell^p(\mathds{N}_0)$ or $c_0(\mathds{N}_0)$. Next, we relate these results to a block-operator matrix depending on $\partial_x$ in a Herzog space by the isometric isomorphism between $c_0(\mathds{N}_0)$ and $X_{\rho}$. We will apply this analysis to systems \eqref{sistema ecuaciones caso polinomico} and \eqref{sistema ecuaciones caso racional}, providing general conditions to analyze sub-chaos in both cases. 

\begin{theorem}\label{espectro de la matriz}
    Let $X$ be one of the complex sequence spaces $\ell^p(\mathds{N}_0)$ $1\leq p<\infty$ or $c_0(\mathds{N}_0)$ and $f_1, f_2, ..., f_n$ be holomorphic functions on $\overline{\mathds{D}}$ such that at least one of them is nonconstant. Define the block operator matrix $A:X^n\rightarrow X^n$ as:
        $$A:=\begin{pmatrix}
0 & I & 0 & \dots \\
0 & 0 & I & \dots \\
\vdots & \vdots & \vdots & \ddots \\
f_1(B) & f_2(B) & \dots & f_n(B) 
\end{pmatrix}.$$
Moreover, for each $\lambda\in\mathds{C}$ define the operator $F_{\lambda}:X\rightarrow X$ as: 
    $$F_{\lambda}:=f_1(B)+\lambda f_2(B)+\dots+\lambda^{n-1}f_n(B)-\lambda^nI.$$
    Then the following assertions hold: 
    \begin{enumerate}[(i)]
        \item  \begin{align*}
            &\sigma_P(A)=\{\lambda\in\mathds{C}\text{ such that }0\in\sigma_P(F_{\lambda})\}\\
            &=\{\lambda\in\mathds{C}\text{ such that exists }z\in\mathds{D}\text{ with }  f_1(z)+\lambda f_2(z)+\cdots +\lambda^{n-1}f_n(z)-\lambda^n=0\},
        \end{align*}
        \item \begin{align*}
            &\sigma(A)=\{\lambda\in\mathds{C}\text{ such that }0\in\sigma(F_{\lambda})\}\\
            &=\{\lambda\in\mathds{C}\text{ such that exists }z\in\overline{\mathds{D}} \text{ with } f_1(z)+\lambda f_2(z)+\cdots +\lambda^{n-1}f_n(z)-\lambda^n=0\}.
        \end{align*} 
    \end{enumerate}
\end{theorem}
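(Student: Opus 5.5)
The plan is to reduce everything to the scalar operator $F_\lambda$ through the companion-matrix structure of $A$, and then read off its spectrum from the spectral mapping theorem (Theorem \ref{spectral mapping}) together with Theorem \ref{espectro backward shift}.

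\textbf{Point spectrum.} Write $x=(x_1,\dots,x_n)\in X^n$. The equation $Ax=\lambda x$ says $x_{k+1}=\lambda x_k$ for $k=1,\dots,n-1$, so $x_k=\lambda^{k-1}x_1$. The last row then becomes
$$\sum_{k=1}^n \lambda^{k-1}f_k(B)x_1=\lambda^n x_1,$$
that is, $F_\lambda x_1=0$. Since $x\neq 0$ forces $x_1\neq 0$, we get $\lambda\in\sigma_P(A)$ if and only if $0\in\sigma_P(F_\lambda)$.

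Now $F_\lambda=g_\lambda(B)$, where $g_\lambda(z)=\sum_k\lambda^{k-1}f_k(z)-\lambda^n$ is holomorphic on a neighbourhood of $\overline{\mathds{D}}$. I read the hypothesis ``holomorphic on $\overline{\mathds{D}}$'' as holomorphic on a neighbourhood, which is what the functional calculus needs. The spectral mapping theorem gives $\sigma_P(g_\lambda(B))=g_\lambda(\mathds{D})$, so $0\in\sigma_P(F_\lambda)$ if and only if $g_\lambda(z)=0$ for some $z\in\mathds{D}$.

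If one does not want to rely on the point-spectrum version of the spectral mapping theorem, this step can be checked directly. If $g_\lambda$ is constant, then $g_\lambda(B)$ is a scalar multiple of $I$ and the claim is immediate. Otherwise, if $g_\lambda(z_0)=0$ with $z_0\in\mathds{D}$, the vector $e_{z_0}=(z_0^j)_j$ satisfies $Be_{z_0}=z_0e_{z_0}$, hence $g_\lambda(B)e_{z_0}=0$. Conversely, if $g_\lambda$ has no zero in $\mathds{D}$, factor $g_\lambda=h\prod_j(z-z_j)$ with $|z_j|\ge1$ and $h$ zero-free near $\overline{\mathds{D}}$. Then $h(B)$ is invertible, and each $B-z_j$ is injective because $\sigma_P(B)=\mathds{D}$, so $g_\lambda(B)$ is injective.

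\textbf{Spectrum.} Here I show $A-\lambda$ is invertible if and only if $F_\lambda$ is invertible, by solving $(A-\lambda)x=y$ explicitly. The first $n-1$ rows give $x_{k+1}=\lambda x_k+y_k$, so recursively
$$x_k=\lambda^{k-1}x_1+\sum_{j<k}\lambda^{k-1-j}y_j.$$
Substituting into the last row gives $F_\lambda x_1=y_n-R(y)$, where $R$ is a bounded linear expression in $y_1,\dots,y_{n-1}$ involving the $f_k(B)$ and powers of $\lambda$.

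If $F_\lambda$ is invertible, this formula defines a bounded inverse of $A-\lambda$. If $F_\lambda$ is not surjective, pick $y_n$ outside its range and set $y_1=\dots=y_{n-1}=0$; then $(A-\lambda)x=y$ has no solution. If $F_\lambda$ is not injective, the point-spectrum computation shows $A-\lambda$ is not injective either. Hence $\sigma(A)=\{\lambda: 0\in\sigma(F_\lambda)\}$.

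Finally, $\sigma(F_\lambda)=g_\lambda(\sigma(B))=g_\lambda(\overline{\mathds{D}})$ by Theorem \ref{spectral mapping}, which gives the second description in (ii).

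\textbf{Main obstacle.} The delicate step is the point-spectrum identity $\sigma_P(g(B))=g(\mathds{D})$, especially in degenerate cases where $g_\lambda$ is constant for some $\lambda$ (for instance $g_\lambda\equiv 0$). The factorisation argument above handles these cases, and it is consistent with the stated formula: if $g_\lambda\equiv0$, then $z$ can be any point of $\mathds{D}$ and indeed $0\in\sigma_P(F_\lambda)$. The block-matrix reduction itself is routine.
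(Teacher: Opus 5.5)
Your proposal is correct and follows essentially the same route as the paper: the companion structure reduces both $Ax=\lambda x$ and $(A-\lambda I)x=y$ to equations for $F_\lambda$, and the spectral mapping theorem for $B$ then identifies $\sigma_P(F_\lambda)$ and $\sigma(F_\lambda)$. Your version is somewhat more careful in two places: the single recursion replaces the paper's coordinate-by-coordinate surjectivity construction, and the factorization argument verifies $\sigma_P(g_\lambda(B))=g_\lambda(\mathds{D})$ directly, including the degenerate constant case, which the paper only cites.
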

\begin{proof}
Let us first compute the eigenvalues of the block operator matrix $A$. $$\begin{pmatrix}
0 & I & 0 & \dots \\
0 & 0 & I & \dots \\
\vdots & \vdots & \vdots & \ddots \\
f_1(B) & f_2(B) & \dots & f_n(B) 
\end{pmatrix}\begin{pmatrix}
    u_1\\
    u_2\\
    \vdots\\
    u_n
\end{pmatrix}=\lambda\begin{pmatrix}
    u_1\\
    u_2\\
    \vdots\\
    u_n
\end{pmatrix},$$
which yields the system: 
$$
\begin{cases}
u_2=\lambda u_1 \\
\vdots\\
f_1(B)u_1+f_2(B)u_1+\dots+f_n(B)u_n=\lambda u_n
\end{cases}.
$$
As a consequence we have: 
$$f_1(B)u_1+f_2(B)\lambda u_1+\dots+f_n(B)\lambda^{n-1}u_1=\lambda^n u_1.$$ 
Hence $\lambda$ is an eigenvalue of $A$ if and only if $0\in \sigma_P(F_\lambda)$, proving assertion $(i)$.

    We now prove assertion $(ii)$. Assume, on the contrary, that $\lambda\in\rho(A)$. That is, we can define the resolvent operator $(A-\lambda I)^{-1}$. By assertion $(i)$ we have that $0\notin \sigma_P(F_{\lambda})$, and therefore $F_{\lambda}$ is injective. Now, for each $x\in X$, there is some vector $(u_1, u_2, ..., u_n)\in X^n$ such that
    $$\begin{pmatrix}
-\lambda I & I & 0 & \dots \\
0 & -\lambda I & I & \dots \\
\vdots & \vdots & \vdots & \ddots \\
f_1(B) & f_2(B) & \dots & f_n(B)-\lambda I 
\end{pmatrix}\begin{pmatrix}
    u_1\\
    u_2\\
    \vdots\\
    u_n
\end{pmatrix}=\begin{pmatrix}
    0\\
    0\\
    \vdots\\
    x
\end{pmatrix}.$$
As a consequence, we have the following system: 
$$
\begin{cases}
u_2=\lambda u_1 \\
u_3=\lambda u_2=\lambda^2u_1\\
\vdots\\
u_n=\lambda u_{n-1}=\lambda^{n-1}u_1\\
f_1(B)u_1+f_2(B)u_2+\dots+(f_n(B)-\lambda)u_n=x
\end{cases}.
$$
And from the last equation it follows that: 
\begin{align*}
    &f_1(B)u_1+f_2(B)u_2+\dots+(f_n(B)-\lambda)u_n\\
    &=f_1(B)u_1+f_2(B)\lambda u_1+\dots+(f_n(B)-\lambda)\lambda^{n-1}u_1\\
    &=F_{\lambda}u_1=x.
\end{align*}
Hence we have that the operator $F_{\lambda}$ is a bijection and therefore $0\in \rho(F_{\lambda})$. \\

 On the other hand, assume that $\lambda\in\mathds{C}$ is such that $0\in\rho(F_{\lambda})$. By assertion $(i)$ it follows that $\lambda\notin \sigma_{P}(A)$, and therefore $A-\lambda I$ is injective. For each $x_n\in X$ there exists some $u_1\in X$ such that $F_{\lambda}u_1=x_n$. Hence
$$\begin{pmatrix}
-\lambda I & I & 0 & \dots \\
0 & -\lambda I & I & \dots \\
\vdots & \vdots & \vdots & \ddots \\
f_1(B) & f_2(B) & \dots & f_n(B)-\lambda I 
\end{pmatrix}\begin{pmatrix}
    u_1\\
    \lambda u_1\\
    \vdots\\
    \lambda^{n-1}u_1
\end{pmatrix}=\begin{pmatrix}
    0\\
    0\\
    \vdots\\
    x_n
\end{pmatrix}.$$
Furthermore, for each $x_i\in X$ and $i=1, 2, ..., n-1$, consider the vector $x^i\in X^n$ such that 
$$x^i_j=\begin{cases}
    x_i\quad \text{ if }j=i\\
    0\quad \text{ otherwise}
\end{cases}.$$
Then there exists some $u_1\in X$ such that 
$$F_{\lambda}u_1=-f_{i+1}(B)x_i-f_{i+2}(B)\lambda x_i-\dots-f_{n-1}(B)\lambda^{n-i-2}x_i-(f_n(B)-\lambda)\lambda^{n-i-1}x_i.$$
As a consequence, we have that: 
$$\begin{pmatrix}
-\lambda I & I & 0 &\dots & &0 &\dots  &0 \\
0 & -\lambda I & I & \dots & &0 &\dots &0\\
\vdots & \vdots & \ddots & \ddots & &\vdots& &\vdots\\
0 &0 &\dots &-\lambda I & I &0 &\dots &0\\
0 &0 &\dots &\dots & -\lambda I &I &\dots &0\\
\vdots & \vdots & \vdots &  & &\ddots &\ddots&\vdots\\
f_1(B) & f_2(B) & \dots &f_i(B) &f_{i+1}(B)&\dots &\dots& f_n(B)-\lambda I 
\end{pmatrix}\begin{pmatrix}
    u_1\\
    \lambda u_1\\
    \vdots\\
    \lambda^{i-1}u_1\\
    x_i+\lambda^{i}u_1\\
    \vdots\\
    \lambda^{n-i-1}x_i+\lambda^{n-1}u_1
\end{pmatrix}=\begin{pmatrix}
    0\\
    0\\
    \vdots\\
    x_i\\
    0\\
    \vdots\\
    0
\end{pmatrix}.$$
It follows that $A-\lambda I$ is a bijection, and therefore $\lambda\in \rho(A)$. 
\end{proof}
As a consequence of the previous result, we characterize the spectrum of the block operator matrix $A_d$ depending on $\partial_x$ in a Herzog space $X_{\rho}^n$. 
\begin{corollary}\label{espectro matriz con derivadas parciales} 
    Let $\rho>0$ and $f_1, f_2, ..., f_n$ be holomorphic functions on $\overline{\mathds{\rho D}}$ such that at least one of them is nonconstant. Define the block operator matrix $A_d:X_\rho^n\rightarrow X_\rho^n$ as:
        $$A_d=\begin{pmatrix}
0 & I & 0 & \dots \\
0 & 0 & I & \dots \\
\vdots & \vdots & \vdots & \ddots \\
f_1(\partial_x) & f_2(\partial_x) & \dots & f_n(\partial_x) 
\end{pmatrix}.$$
Moreover, for each $\lambda\in\mathds{C}$ define the operator $F^d_{\lambda}:X_\rho\rightarrow X_\rho$ as: 
    $$F^d_{\lambda}:=f_1(\partial_x)+\lambda f_2(\partial_x)+\dots+\lambda^{n-1}f_n(\partial_x)-\lambda^nI.$$
    Then the following assertions hold: 
    \begin{enumerate}[(i)]
        \item
        \begin{align*}
            &\sigma_P(A_d)=\{\lambda\in\mathds{C}\text{ such that }0\in\sigma_P(F^d_{\lambda})\}\\
            &=\{\lambda\in\mathds{C}\text{ such that exists }z\in\mathds{\rho D}\text{ with }  f_1(z)+\lambda f_2(z)+\cdots +\lambda^{n-1}f_n(z)-\lambda^n=0\},
        \end{align*}
        \item \begin{align*}
            &\sigma(A_d)=\sigma(A_d)=\{\lambda\in\mathds{C}\text{ such that }0\in\sigma(F^d_{\lambda})\}\\
            &=\{\lambda\in\mathds{C}\text{ such that exists }z\in\overline{\mathds{\rho D}} \text{ with } f_1(z)+\lambda f_2(z)+\cdots +\lambda^{n-1}f_n(z)-\lambda^n=0\}.
        \end{align*} 
        \end{enumerate}
\end{corollary}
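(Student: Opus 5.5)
The plan is to transfer Theorem \ref{espectro de la matriz} from $c_0(\mathds{N}_0)$ to $X_\rho$ by conjugation. The first step is to fix the isometric isomorphism $\Phi:X_\rho\to c_0(\mathds{N}_0)$, $\Phi(f)=(a_n)_{n\ge0}$. Under it, $\partial_x$ corresponds to $\rho B$, that is, $\Phi\partial_x\Phi^{-1}=\rho B$. Since conjugation by an isomorphism commutes with the Riesz--Dunford integral (resolvents conjugate to resolvents), we get
\[
\Phi f_k(\partial_x)\Phi^{-1}=f_k(\rho B)
\]
for each $k$. Now set $g_k(z):=f_k(\rho z)$. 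Each $g_k$ is holomorphic on a neighborhood of $\overline{\mathds{D}}$, at least one of them is nonconstant, and $g_k(B)=f_k(\rho B)$. The identity $g_k(B)=f_k(\rho B)$ follows from the composition rule of the functional calculus with the affine map $z\mapsto\rho z$, or more simply by a change of variables $\mu=\rho\lambda$ in the contour integral, using $R(\mu;\rho B)=\rho^{-1}R(\mu/\rho;B)$.

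Next, let $\Phi^n:=\Phi\oplus\cdots\oplus\Phi:X_\rho^n\to c_0(\mathds{N}_0)^n$. This is again an isomorphism, and blockwise it gives $\Phi^nA_d(\Phi^n)^{-1}=A$, where $A$ is the block matrix of Theorem \ref{espectro de la matriz} built from $g_1,\dots,g_n$. Likewise $\Phi F^d_\lambda\Phi^{-1}=F_\lambda$ with the $g_k$. Similarity preserves both the spectrum and the point spectrum, so
\[
\sigma_P(A_d)=\sigma_P(A),\qquad \sigma(A_d)=\sigma(A),
\]
and $0\in\sigma_P(F^d_\lambda)$ if and only if $0\in\sigma_P(F_\lambda)$, with the same equivalence for $\sigma$.

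Finally, apply Theorem \ref{espectro de la matriz} to the $g_k$. It says $\lambda\in\sigma_P(A)$ if and only if there is $w\in\mathds{D}$ with $\sum_k\lambda^{k-1}g_k(w)-\lambda^n=0$. Substituting $z=\rho w$, this becomes the existence of $z\in\rho\mathds{D}$ with $\sum_k\lambda^{k-1}f_k(z)-\lambda^n=0$. The same substitution gives assertion (ii), with $w$ ranging over $\overline{\mathds{D}}$ and $z$ over $\rho\overline{\mathds{D}}$.

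The only step needing care is the identity $f_k(\rho B)=g_k(B)$ together with the regularity hypothesis. The statement only asks $f_k$ to be holomorphic on $\rho\overline{\mathds{D}}$, and I will read this as holomorphic on a neighborhood of it, as the Riesz--Dunford calculus requires. Everything else is direct bookkeeping of the similarity.
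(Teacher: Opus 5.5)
Your proposal is correct and follows the same route as the paper, which simply cites Theorem \ref{espectro de la matriz} together with the isometric isomorphism between $\partial_x$ on $X_\rho$ and $\rho B$ on $c_0(\mathds{N}_0)$. You additionally spell out the rescaling $g_k(z)=f_k(\rho z)$, and the compatibility of the Riesz--Dunford calculus with similarity and with the map $z\mapsto\rho z$, both of which the paper leaves implicit.
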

\begin{proof}
This corollary is an immediate consequence of Theorem \ref{espectro de la matriz} and the isometric isomorphism between $\partial_x$ on $X_\rho$ and $\rho B$ on $c_0$.     
\end{proof}
For a set $f_1, f_2, ..., f_n$ of holomorphic functions on $\overline{\mathds{D}}$, define the function 
$G:\mathds{C}\times\overline{\mathds{D}}\rightarrow\mathds{C}$ as
$$G(\lambda, z):=f_1(z)+\lambda f_2(z)+\dots+\lambda^{n-1}f_n(z)-\lambda^n.$$
Based on the results of Theorem \ref{espectro de la matriz}, $G$ will be referred to as the characteristic polynomial of the block-operator matrix $A$. Observe that for each fixed $z\in\overline{\mathds{D}}$, $G(\cdot, z)$ and $\frac{\partial G}{\partial z}(\cdot, z)$ are polynomial functions in $\lambda$. As a consequence, for each $z\in\overline{\mathds{D}}$, we can compute the resultant $\textnormal{res}_{\lambda}(G, \frac{\partial G}{\partial z})(z)$. By the definition of the resultant of two polynomials as the determinant of the Sylvester matrix, and the fact that $f_i$ is holomorphic on $\overline{\mathds{D}}$ for all $i=1, 2, ..., n$, it follows that $\textnormal{res}_{\lambda}(G, \frac{\partial G}{\partial z}): \overline{\mathds{D}}\rightarrow\mathds{C}$ is holomorphic. 

\begin{theorem}\label{caos matriz A}
    Let $X$ be one of the complex sequence spaces $\ell^p(\mathds{N}_0)$ $1\leq p<\infty$ or $c_0(\mathds{N}_0)$. Moreover, let $f_1, f_2, ..., f_n$ be holomorphic functions on $\overline{\mathds{D}}$ such that there exists some $\Tilde{z}\in\mathds{D}$ with $\textnormal{res}_{\lambda}{\left(G, \frac{\partial G}{\partial_z}\right)}(\Tilde{z})\neq 0$. 
    Then the following assertions are equivalent: 
    \begin{enumerate}[(i)]
        \item The block operator matrix:
        $$A=\begin{pmatrix}
0 & I & 0 & \dots \\
0 & 0 & I & \dots \\
\vdots & \vdots & \vdots & \ddots \\
f_1(B) & f_2(B) & \dots & f_n(B) 
\end{pmatrix}$$
is Devaney sub-chaotic on $X^n$. \\
\item There are some $\lambda_0\in\mathds{T}$, $z_0\in\mathds{D}$ such that $G(\lambda_0, z_0)=0$ and $\textnormal{res}_{\lambda}\left(G, \frac{\partial G}{\partial_z}\right)(z_0)\neq 0$.
    \end{enumerate}
\end{theorem}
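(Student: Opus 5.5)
The plan is to prove (ii)$\Rightarrow$(i) with the eigenvector field criterion (Theorem \ref{eigenvector-criterion operador}), building the field from the eigenvectors of $B$ through the implicit function theorem. For (i)$\Rightarrow$(ii) I will use the necessary condition of Theorem \ref{condicion necesaria caos operador} together with a local analysis of the analytic set $\{G=0\}$. The basic observation is the following. For $z\in\mathds{D}$ let $e_z:=(z^k)_{k\ge 0}\in X$. Then $Be_z=ze_z$, so by functional calculus $f_i(B)e_z=f_i(z)e_z$, and hence $F_\lambda e_z=G(\lambda,z)e_z$. Consequently, whenever $G(\lambda,z)=0$ with $z\in\mathds{D}$, the vector
$$E(\lambda,z):=(e_z,\lambda e_z,\dots,\lambda^{n-1}e_z)$$
lies in $\ker(\lambda I-A)$, by the computation in the proof of Theorem \ref{espectro de la matriz}(i).

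\textbf{(ii)$\Rightarrow$(i).} Take $\lambda_0\in\mathds{T}$ and $z_0\in\mathds{D}$ with $G(\lambda_0,z_0)=0$ and $\textnormal{res}_\lambda(G,\partial G/\partial z)(z_0)\neq0$.
\begin{enumerate}[(a)]
\item Since the resultant is nonzero at $z_0$, the polynomials $G(\cdot,z_0)$ and $\partial_zG(\cdot,z_0)$ have no common root. In particular $\partial_z G(\lambda_0,z_0)\neq0$.
\item By the holomorphic implicit function theorem, there are a connected open neighbourhood $U$ of $\lambda_0$ and a holomorphic map $z:U\to\mathds{D}$ with $z(\lambda_0)=z_0$ and $G(\lambda,z(\lambda))=0$. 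Shrinking $U$ keeps $z(U)\subset\mathds{D}$.
\item The map $\lambda\mapsto E(\lambda,z(\lambda))$ is holomorphic into $X^n$, since $z\mapsto e_z$ is holomorphic on $\mathds{D}$ as a power series in each space considered. It is an eigenvector field of $A$ on $U$, and $U\cap\mathds{T}\ni\lambda_0$.
\item Theorem \ref{eigenvector-criterion operador} then shows that $A$ restricted to $Y=\overline{\text{span}}\{E(\lambda,z(\lambda))\}$ is Devaney chaotic. Moreover $Y\neq\{0\}$, because the first coordinate $e_{z(\lambda)}$ has first entry $1$. Hence $A$ is sub-chaotic.
\end{enumerate}

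\textbf{(i)$\Rightarrow$(ii).} Let $Y\neq\{0\}$ be the invariant subspace on which $A$ is chaotic.
\begin{enumerate}[(a)]
\item By Theorem \ref{condicion necesaria caos operador}, $\sigma_P(A|_Y)\subset\sigma_P(A)$ contains roots of unity. So there is $\lambda_0\in\mathds{T}$ with $\lambda_0\in\sigma_P(A)$, and by Theorem \ref{espectro de la matriz}(i) there is $z_0\in\mathds{D}$ with $G(\lambda_0,z_0)=0$.
\item If the resultant is nonzero at $z_0$, we are done. Otherwise we perturb. Let $R(z):=\textnormal{res}_\lambda(G,\partial G/\partial z)(z)$. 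It is holomorphic on $\mathds{D}$ and $R(\tilde z)\neq0$, so by the identity theorem its zero set $Z\subset\mathds{D}$ is discrete.
\item Near $(\lambda_0,z_0)$, since $G(\cdot,z)$ is monic of degree $n$ in $\lambda$, the roots $\lambda$ of $G(\cdot,z)$ close to $\lambda_0$ are given by finitely many Puiseux branches $\lambda_j(z)$, continuous in $z$ and holomorphic in a fractional power of $z-z_0$, with $\lambda_j(z_0)=\lambda_0$.
\end{enumerate}

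The main obstacle is to show that some branch actually meets $\mathds{T}$ at points $z\notin Z$. There are two cases.
\begin{itemize}
\item \emph{Some branch is nonconstant.} By the open mapping property in the uniformising variable, its image of a small disc around $z_0$ is a neighbourhood of $\lambda_0$. Therefore $\{z:\lambda_j(z)\in\mathds{T}\}$ is a nondegenerate real-analytic curve through $z_0$, not a discrete set. It therefore contains points $z_1\notin Z$ with $z_1\in\mathds{D}$ and $\lambda_1:=\lambda_j(z_1)\in\mathds{T}$. The pair $(\lambda_1,z_1)$ satisfies (ii).
\item \emph{Every branch is constant.} Then $G(\lambda_0,z)=0$ for all $z$ near $z_0$, hence for all $z\in\mathds{D}$ by analytic continuation. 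Differentiating gives $\partial_zG(\lambda_0,z)\equiv0$. So $\lambda_0$ is a common root of $G(\cdot,z)$ and $\partial_zG(\cdot,z)$ for every $z$, which forces $R\equiv0$ and contradicts $R(\tilde z)\neq0$.
\end{itemize}
I expect the careful justification of the nonconstant case to need the most work: one has to get a clean local parametrisation of the branch, through the Weierstrass preparation theorem or a Puiseux expansion, and check that the preimage of $\mathds{T}$ is not contained in the discrete set $Z$.
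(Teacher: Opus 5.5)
Your proposal is correct. The direction (ii)$\Rightarrow$(i) is essentially the paper's argument: the implicit function theorem, the eigenvector field $(e_{z(\lambda)},\lambda e_{z(\lambda)},\dots,\lambda^{n-1}e_{z(\lambda)})$, and the eigenvector field criterion. Your check that $Y\neq\{0\}$ makes explicit something the paper leaves implicit.

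For (i)$\Rightarrow$(ii) your route differs.
\begin{itemize}
\item \emph{The paper's route.} It uses that $\sigma_P(A)$ contains infinitely many unimodular points to choose $\lambda_1\in\mathds{T}$ with $G(\lambda_1,\cdot)\not\equiv 0$. This is its Claim 1: at most $n$ values of $\lambda$ make $G(\lambda,\cdot)$ vanish identically. It then applies Rouch\'e in the variable $z$ to get, for every $\lambda$ near $\lambda_1$, a root in a fixed disc $\overline{B(z_1,r)}\subset\mathds{D}$. Finally it extracts a convergent sequence $(\lambda_i,z_i)$ to avoid the zeros of the resultant.
\item \emph{Your route.} You need only one unimodular eigenvalue. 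You observe that $G(\lambda_0,\cdot)\equiv 0$ would make $\lambda_0$ a common root of $G(\cdot,z)$ and $\partial_zG(\cdot,z)$ for every $z$, forcing $R\equiv 0$. So under the standing hypothesis on $\tilde z$, the paper's Claim 1 and the appeal to infinitely many roots of unity are superfluous. You then solve the other way round, taking $\lambda$ as a Puiseux function of $z$ and using openness of a nonconstant branch in the uniformising variable $t$. Strictly, the branches are continuous in $t$ (with $z-z_0=t^k$), not in $z$. Your argument is carried out in $t$, so this is harmless.
\end{itemize}

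Your route is valid but needs Weierstrass preparation or Puiseux expansions, whereas the paper stays within one-variable Rouch\'e. The two ideas combine into the shortest proof:
\begin{enumerate}
\item Your observation gives $G(\lambda_0,\cdot)\not\equiv 0$.
\item Hurwitz then gives, for each $\lambda\in\mathds{T}$ near $\lambda_0$, a root $z(\lambda)\in\overline{B(z_0,r)}\subset\mathds{D}$.
\item Each $z$ serves at most $n$ values of $\lambda$, because $G(\cdot,z)$ has degree $n$.
\item $R$ has only finitely many zeros in $\overline{B(z_0,r)}$, so some $\lambda$ has $R(z(\lambda))\neq 0$.
\end{enumerate}
This counting is also what makes the paper's sequence argument in Claim 3 rigorous. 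That argument needs $z_{i_k}\neq\tilde z_0$, and its relation should read $G(\lambda_{i_k},z_{i_k})=0$ rather than $G(\tilde\lambda_0,z_{i_k})=0$.
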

\begin{proof}
    $(ii)\rightarrow(i)$. We will use the Eigenvector Field Criterion \ref{eigenvector-criterion operador}.
    By Theorem \ref{espectro de la matriz} we have that 
$\lambda$ is an eigenvalue of $A$ if and only if there is some $z\in\mathds{D}$ such that the characteristic polynomial $G(\lambda, z)=0$. By the hypothesis, it follows that $\lambda_0$ is an eigenvalue of $A$. \\

Since $\textnormal{res}_{\lambda}\left(G, \frac{\partial G}{\partial z}\right)(z_0)\neq 0$, it follows that the polynomials $G(\cdot, z_0)$ and $\frac{\partial G}{\partial z}(\cdot, z_0)$ share no common root. In particular, since $G(\lambda_0, z_0)=0$ then $\frac{\partial G}{\partial z}(\lambda_0, z_0)\neq 0$. 
We notice that $G(\lambda, z)$ is a holomorphic function on $\mathds{C}\times\mathds{D}$, and therefore we can apply the Implicit Function Theorem for holomorphic functions (see Theorem 8.6 in Chapter 0 of \cite{kaup}). Consequently, there exist some neighborhood $U$ of $\lambda_0$ and a holomorphic function $g:U\rightarrow \mathds{D}$ such that $G(\lambda, g(\lambda))=0$ for all $\lambda\in U$. Moreover, $g$ is a nonconstant function on $U$ because $G(\cdot, z_0)$ is a non-zero polynomial in $\lambda$ of degree $n > 1$, so it cannot vanish identically on $U$.
 
Define the sequence $e_{\lambda}$ as
$$e_{\lambda}:=(1, g(\lambda), g(\lambda)^2, ...),$$
and the vector $v_{\lambda}\in X^n$ as
$$v_\lambda:=(e_{\lambda}, \lambda e_\lambda, ...,\lambda^{n-1}e_{\lambda}).$$
Now we prove that $v_{\lambda}$ is an eigenvector of $A$ associated with the eigenvalue $\lambda$ for each $\lambda\in U$. 
Indeed we have: 
\begin{align*}
    &A v_{\lambda}=(\lambda e_{\lambda}, ..., \lambda^{n-1}e_{\lambda}, f_1(B)e_\lambda+...+f_n(B)\lambda^{n-1}e_{\lambda})\\
    &=(\lambda e_{\lambda}, ..., \lambda^{n-1}e_{\lambda}, f_1(g(\lambda))e_{\lambda}+...+\lambda^{n-1}f_n(g(\lambda))e_{\lambda})\\
    &=(\lambda e_{\lambda}, ..., \lambda^{n-1}e_{\lambda}, G(\lambda, g(\lambda))e_{\lambda}+\lambda^{n}e_{\lambda})\\
    &=(\lambda e_{\lambda}, ..., \lambda^{n-1}e_{\lambda}, \lambda^{n}e_{\lambda})=\lambda v_{\lambda}.\\
\end{align*}

Define the function $V:U\rightarrow X^n$ as: 
$$V(\lambda):=v_{\lambda}.$$
Taking $x^*\in (X^n)^*$, by the Riesz representation Theorem there are some sequences $(a^i_j)_j$, $i=0,1, 2, ..., n$ in $\ell^q(\mathds{N}_0)$, $1\leq q\leq \infty$ such that: 
$$x^*(V(\lambda))=x^*(v_{\lambda})=\sum_{j=0}^{\infty} \overline{a_j^1}g(\lambda)^j+\lambda\sum_{j=0}^{\infty} \overline{a_j^2}g(\lambda)^j+...+\lambda^{n-1}\sum_{j=0}^{\infty} \overline{a_j^n}g(\lambda)^j.$$
It follows that $V$ is a weakly holomorphic function that verifies the hypothesis of the Eigenvector Field Criterion, and the conclusion holds.\\

$(i)\rightarrow(ii)$. Recall that since $A$ is sub-chaotic $\sigma_P(A)$ contains infinitely many roots of unity. Hence by Theorem \ref{espectro de la matriz}, the set 
$$\{\lambda\in \mathds{T} \text{ such that exists }z\in\overline{\mathds{D}} \text{ with }G(\lambda, z)=0\}$$
is infinite. 

\textit{Claim 1:} The set 
$$F_1:=\{\lambda\in\mathds{C}\text{ such that }G(\lambda, \cdot)=0 \}$$
contains at most $n$ elements. Indeed, by the hypothesis on the resultant there is at least one  $\tilde{z}\in\overline{\mathds{D}}$ such that $G(\cdot, \tilde{z})$ is not identically zero. By the Fundamental Theorem of Algebra there are at most $n$ roots $\lambda_1, ..., \lambda_n$ of the polynomial $G(\cdot, \tilde{z})$. As a consequence, if $\lambda\in F_1$ then in particular $G(\lambda, \tilde{z})=0$, and therefore $\lambda=\lambda_i$ for some $i=1, 2, ..,n$.\\

By the first claim we now have that there is some $\lambda_1\in\mathds{T}$ and $z_1\in\overline{\mathds{D}}$ such that $G(\lambda_1, z_1)=0$ and $G(\lambda_1, \cdot)$ is not identically zero.

\textit{Claim 2: } there are some open neighborhoods $U$ of $\lambda_1$ and $V$ of $z_1$ such that $\overline{V}\subset \mathds{D}$ and for all $\lambda\in U$ there is some $z\in V$ with $G(\lambda, z)=0$. 

 By the Identity Theorem for holomorphic functions, $z_1$ is an isolated point of $G(\lambda_1, \cdot)$, and therefore there exists some $r>0$ such that $\overline{B(z_1, r)}\subset \mathds{D}$ and for all $z\in \overline{B(z_1, r)}$ with $z\neq z_1$, $G(\lambda_1, z)\neq 0$. Define $\partial B(z_1, r):=\{z\in\mathds{C}\text{ such that }|z-z_1|=r\}$. Since $\partial B(z_1, r)$ is a compact set and $G(\lambda_1, z)\neq 0$ for all $z\in\partial B(z_1, r)$, there exists some $m>0$ such that 
 $$m:=\min_{z\in\partial B(z_1, r)}|G(\lambda_1, z)|.$$
 
 Now since $G(\lambda, z)$ is continuous, for each $z\in\partial B(z_1, r)$ there is some $\delta_z>0$ such that if $|\lambda-\lambda_1|<\delta_z$ and $|\zeta-z|<\delta_z$ then $$|G(\lambda, \zeta)-G(\lambda_1, z)|<m/2.$$
 As a consequence of the compactness of $\partial B(z_1, r)$, there are some some $\zeta_1, \zeta_2, ..., \zeta_k$ such that
 $$\partial B(z_1, r)\subset\bigcup_{i=1}^kB(\zeta_i, \delta_{\zeta_i}).$$
 Take $\delta:=\min\{\delta_{\zeta_i}: i=1, 2, ..., k\}$ and define $U:=B(\lambda_1, \delta)$. We then have that for all $z\in\partial B(z_1, r)$, $z\in B(\zeta_i, \delta_{\zeta_i})$ for some $i=1, 2, ..., k$. Hence for all $\lambda \in U$: 
 $$|G(\lambda, z)-G(\lambda_1, z)|\leq |G(\lambda, z)-G(\lambda_1, \zeta_i)|+|G(\lambda_1, \zeta_i)-G(\lambda_1, z)|<m. $$
 Finally, we have that for each $\lambda\in U$, $G(\lambda, \cdot)$ and $G(\lambda_1, \cdot)$ are holomorphic functions such that 
 $$|G(\lambda, z)-G(\lambda_1, z)|<m\leq |G(\lambda_1, z)| \text{ for all }z\in\partial B(z_1, r),$$
 and by Rouche's Theorem $G(\lambda, \cdot)$ and $G(\lambda_1, \cdot)$ have the same number of zeros on $B(z_1, r)$. Hence for each $\lambda\in U$ there is some $z\in B(z_1, r)\subset\mathds{D}$ such that $G(\lambda, z)=0$, and the claim holds. \\

 \textit{Claim 3: }there is some $(\lambda_0, z_0)\in \mathds{T}\times \overline{B(z_1, r)}$ such that $G(\lambda_0, z_0)= 0$ and $\frac{\partial G}{\partial z}(\lambda_0, z_0)\neq 0$. Indeed, by the second claim for all $\lambda\in \mathds{T}\cap U$ there is some $z$ in $B(z_1, r)$ such that $G(\lambda, z)=0$. Take an infinite sequence $(\lambda_i, z_i)_{i\in\mathds{N}}$ such that $G(\lambda_i, z_i)=0$ for all $i\in\mathds{N}$. Since $\mathds{T}\cap U$ and $B(z_1, r)$ are relatively compact sets we have that there is a subsequence $(\lambda_{i_k}, z_{i_k})_{k\in\mathds{N}}$ such that
 $$\lim_{k\rightarrow\infty} (\lambda_i, z_i)=(\tilde{\lambda}_0, \tilde{z}_0)\in \mathds{T}\cap\overline{U}\times \overline{B(z_1, r)},$$
 and by continuity of $G$, $G(\tilde{\lambda}_0, \tilde{z}_0)=0$. If we consider $\text{res}_{\lambda}(G, G_z)(\tilde{z}_0)\neq 0$, then $\frac{\partial G}{\partial z}(\tilde{\lambda}_0, \tilde{z}_0)\neq 0$ and the conclusion follows immediately. Otherwise, since by the hypothesis $\text{res}_{\lambda}(G, G_z)$ is not identically zero in $\overline{\mathds{D}}$, $\tilde{z}_0$ is an isolated root of $\text{res}_{\lambda}(G, G_z)$, and therefore there is some $K>0$ such that $$\text{res}_{\lambda}{(G, G_z)}(z_{i_k})\neq 0\text{ for all }k>K.$$
 Hence $G(\tilde{\lambda}_0, z_{i_k})=0$ while $\frac{\partial G}{\partial z }(\tilde{\lambda}_0, z_{i_k})\neq 0$ for all $k>K$, thus yielding the claim. 

\end{proof}
\begin{corollary} The project sequence $(T_n: X^n\rightarrow X)_{n\in\mathds{N}}$ generated by the block operator matrix $A$ is Devaney chaotic if and only if condition $(ii)$ on Theorem \ref{caos matriz A} holds.  
\end{corollary}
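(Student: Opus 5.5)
The plan is to prove both implications using Definition \ref{projseq}, with Theorem \ref{caos matriz A} doing most of the work. Throughout we keep the standing hypothesis of Theorem \ref{caos matriz A}, namely that $\textnormal{res}_{\lambda}(G,\partial G/\partial z)$ does not vanish identically on $\mathds{D}$.

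\emph{Necessity.} Assume the projection sequence is Devaney chaotic. By Definition \ref{projseq}, $A$ is then sub-chaotic with respect to some invariant closed subspace $Y$ with $\overline{\pi(Y)}=X$. Since $X\neq\{0\}$, we get $Y\neq\{0\}$. Hence $A$ is Devaney sub-chaotic, and the implication $(i)\Rightarrow(ii)$ of Theorem \ref{caos matriz A} gives condition $(ii)$ directly.

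\emph{Sufficiency.} Assume $(ii)$ holds. Following the proof of $(ii)\Rightarrow(i)$ in Theorem \ref{caos matriz A}, we obtain a neighborhood $U$ of $\lambda_0$ and a nonconstant holomorphic map $g:U\rightarrow\mathds{D}$ with $G(\lambda,g(\lambda))=0$. This yields the weakly holomorphic eigenvector field
$$V(\lambda)=(e_\lambda,\lambda e_\lambda,\dots,\lambda^{n-1}e_\lambda),\qquad e_\lambda=(g(\lambda)^j)_{j\ge0}.$$
We set $Y:=\overline{\operatorname{span}}\{V(\lambda):\lambda\in U\}$. By Theorem \ref{eigenvector-criterion operador}, $Y$ is $A$-invariant and $A|_Y$ is Devaney chaotic, since $U\cap\mathds{T}\ni\lambda_0$. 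It remains to check that $\overline{\pi(Y)}=X$.

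We have $\pi(Y)\supseteq\operatorname{span}\{e_\lambda:\lambda\in U\}$. The function $g$ is nonconstant and holomorphic, so by the open mapping theorem $W:=g(U)$ is a nonempty open subset of $\mathds{D}$. It therefore suffices to show that $\operatorname{span}\{(w^j)_j: w\in W\}$ is dense in $X$. We argue by Hahn–Banach. Let $a=(a_j)\in X^*$, which is $\ell^q$ or $\ell^1$, and suppose $a$ annihilates all these vectors. Then $\phi(w):=\sum_j \overline{a_j}w^j$ vanishes on $W$. This power series has bounded coefficients, so it is holomorphic on $\mathds{D}$. By the identity theorem $\phi\equiv0$ on $\mathds{D}$, hence all $a_j=0$, and density follows. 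Since $\pi$ is continuous, $\pi(Y)\supseteq\pi(\operatorname{span}V(U))$, whose closure is $X$. Therefore $\overline{\pi(Y)}=X$, and the projection sequence is Devaney chaotic.

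\emph{Main obstacle.} The only real issue is the density of $\pi(Y)$, which in turn requires $g$ to be nonconstant so that its range is open. This is already noted in the proof of Theorem \ref{caos matriz A}. If one were worried about it, an alternative is to use $\partial G/\partial z(\lambda_0,z_0)\neq0$ together with the fact that $G(\cdot,z_0)$ is not identically zero: if $g$ were constant equal to $z_0$, then $G(\lambda,z_0)=0$ for all $\lambda$ near $\lambda_0$, which is impossible for a nonzero polynomial. It is also worth remarking that $\pi$ is continuous, so taking closures commutes appropriately with the inclusions used above.
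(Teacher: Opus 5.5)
Your proof is correct and follows essentially the same route as the paper. Necessity comes from the implication $(i)\Rightarrow(ii)$ of Theorem~\ref{caos matriz A}, and sufficiency comes from the eigenvector field $V(\lambda)=(e_\lambda,\lambda e_\lambda,\dots,\lambda^{n-1}e_\lambda)$, the criterion of Theorem~\ref{eigenvector-criterion operador}, and a Hahn--Banach, open-mapping and identity-theorem argument for the density of $\pi(Y)$. Your use of the inclusion $\pi(Y)\supseteq\mathrm{span}\{e_\lambda:\lambda\in U\}$ is slightly more careful than the paper, which asserts equality with a closed span even though the image of a closed subspace under $\pi$ need not be closed.
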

\begin{proof}
By the definition, if the project sequence of operators $(T_n)_{n\in\mathds{N}}$ is chaotic then the operator $A$ is sub-chaotic and therefore by Theorem \ref{caos matriz A} condition $(ii)$ holds. \\

On the other hand, if condition $(ii)$ holds, then by the proof of Theorem \eqref{caos matriz A} there are some open neighborhood $U$ of $\lambda_0$ and a nonconstant holomorphic function $g:U\rightarrow \mathds{D}$ such that the function $V:U\rightarrow X^n$ with $V(\lambda)=v_{\lambda}$ is a weakly holomorphic eigenvector field. Defining $Y:=\overline{\text{span}\{V(\lambda): \lambda\in U\}}$ we have that $Y$ is a closed subset of $X^n$, invariant under $A$, such that $A|_{Y}$ is chaotic. Moreover, taking the first coordinate projection of $Y$: 
$$\pi(Y)=\pi(\overline{\text{span}\{V(\lambda): \lambda\in U\}})=\overline{\text{span}\{e_{\lambda}\}}.$$
Now, we prove that $\overline{\text{span}\{e_{\lambda}\}}=X$. Indeed, assume that there is some $x^*\in X^*$ such that $x^*(V(e_{\lambda}))=0$ for all $\lambda\in U$. Then by the Riesz representation Theorem there is some sequence $(a_n)_n\in \ell^q(\mathds{N}_0)$, $1\leq q\leq \infty$ such that: 
$$x^*(e_{\lambda})=\sum_{j=0}^{\infty}\overline{a_j}g(\lambda)^j=0\quad \text{ for all }\lambda \in U.$$
Finally, by the Open Mapping Theorem for holomorphic functions, we have that $\{g(\lambda): \lambda\in U\}$ is an open set and therefore by the Identity Theorem for holomorphic functions it follows that $a_n=0$ for all $n\in\mathds{N}_0$. The conclusion then follows by the Hahn-Banach Theorem. 

\end{proof}

For a set $f_1, f_2, ..., f_n$ of holomorphic functions on $\overline{\rho\mathds{D}}$, define the characteristic polynomial 
$G_d:\mathds{C}\times\mathds{\rho\overline{D}}\rightarrow\mathds{C}$ as
$$G_d(\lambda, z):=f_1(z)+\lambda f_2(z)+\dots+\lambda^{n-1}f_n(z)-\lambda^n.$$
The following Theorem characterizes sub-chaos for the semigroup generated by the block-operator matrix $A_d$. 
\begin{theorem}\label{caos matriz semigrupos}
    Let $\rho>0$ and $f_1, f_2, ..., f_n$ be holomorphic functions on $\overline{\rho\mathds{D}}$ such that there exists some $\Tilde{z}\in\rho\mathds{D}$ with $\textnormal{res}_{\lambda}{\left(G_d, \frac{\partial G_d}{\partial_z}\right)}(\Tilde{z})\neq 0$. 
    Then the following assertions are equivalent: 
    \begin{enumerate}[(i)]
        \item The block operator matrix:
        $$A_d=\begin{pmatrix}
0 & I & 0 & \dots \\
0 & 0 & I & \dots \\
\vdots & \vdots & \vdots & \ddots \\
f_1(\partial_x) & f_2(\partial_x) & \dots & f_n(\partial_x) 
\end{pmatrix}$$
generates a sub-chaotic semigroup $(e^{tA})_{t\geq 0}$ on $X_\rho^n$ such that the projection family $(\pi(e^{tA_d}): X_{\rho}^n\rightarrow X_{\rho})_{t\geq 0}$ is chaotic.  \\
\item There are some $\lambda_0\in i\mathds{R}$, $z_0\in\rho\mathds{D}$ such that $G_d(\lambda_0, z_0)=0$ and $\textnormal{res}_{\lambda}{\left(G_d, \frac{\partial G_d}{\partial_z}\right)}(z_0)\neq 0$.\\
\item Every autonomous discretization of $(e^{tA_d})_{t\geq 0}$ is Devaney sub-chaotic on $X_{\rho}^n$. \\
\item Some autonomous discretization of $(e^{tA_d})_{t\geq 0}$ is Devaney sub-chaotic on $X_{\rho}^n$. 
    \end{enumerate}
\end{theorem}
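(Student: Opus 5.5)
We plan to mirror the proof of Theorem \ref{caos matriz A} in the continuous-time setting, with the role of $\mathds{T}$ played by $i\mathds{R}$. The discrete-time assertions $(iii)$ and $(iv)$ will be handled by the spectral mapping theorem, exactly as in Theorem \ref{equivalencias primer orden}. Throughout we transport everything to $c_0(\mathds{N}_0)^n$ through the isometric isomorphism $\partial_x\leftrightarrow\rho B$. Under this isomorphism $G_d(\lambda,z)$ becomes $G(\lambda,w)$ with $w=z/\rho\in\mathds{D}$. Corollary \ref{espectro matriz con derivadas parciales} then describes $\sigma_P(A_d)$ and $\sigma(A_d)$ through $G_d$. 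We will prove $(ii)\Rightarrow(i)$, $(ii)\Rightarrow(iii)$, $(iii)\Rightarrow(iv)$ (trivial), $(iv)\Rightarrow(ii)$ and $(i)\Rightarrow(ii)$.

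\textbf{$(ii)\Rightarrow(i)$.} Since the resultant does not vanish at $z_0$, we get $\partial_z G_d(\lambda_0,z_0)\neq0$. The holomorphic Implicit Function Theorem then gives a connected neighbourhood $U$ of $\lambda_0$ and a holomorphic $g:U\to\rho\mathds{D}$ with $G_d(\lambda,g(\lambda))=0$. The map $g$ is nonconstant: if $g\equiv c$, then the polynomial $G_d(\cdot,c)$ would vanish on $U$, which is impossible because its leading term is $-\lambda^n$.

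In $X_\rho$ take the exponentials $e_\lambda(x)=e^{g(\lambda)x}$; in coordinates these are $(g(\lambda)/\rho)^k$, which lie in $c_0$. They satisfy $f(\partial_x)e_\lambda=f(g(\lambda))e_\lambda$. Set $V(\lambda)=(e_\lambda,\lambda e_\lambda,\dots,\lambda^{n-1}e_\lambda)$. The same computation as in Theorem \ref{caos matriz A} gives $A_dV(\lambda)=\lambda V(\lambda)$. Weak holomorphy follows from the representation of $(c_0^n)^*=(\ell^1)^n$. Since $U\cap i\mathds{R}\ni\lambda_0$, Proposition \ref{subcaos semigrupos} shows that $e^{tA_d}$ restricted to $Y=\overline{\text{span}}\,V(U)$ is chaotic.

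To see that $\overline{\pi(Y)}=X_\rho$, repeat the argument of the corollary following Theorem \ref{caos matriz A}. A functional annihilating all $e_\lambda$ gives a power series vanishing on the open set $g(U)/\rho$, so the functional is zero.

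\textbf{$(ii)\Rightarrow(iii)$.} Fix $t>0$. Again $e^{tA_d}$ has the eigenvector field $V(\lambda)$ with eigenvalue $e^{t\lambda}$. The map $\mu=e^{t\lambda}$ is a local biholomorphism near $\lambda_0$, and it sends $\lambda_0\in i\mathds{R}$ to a point of $\mathds{T}$. Composing $V$ with a local inverse gives a weakly holomorphic eigenvector field of $e^{tA_d}$ on an open set meeting $\mathds{T}$. Theorem \ref{eigenvector-criterion operador} then yields sub-chaos.

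\textbf{$(iv)\Rightarrow(ii)$ and $(i)\Rightarrow(ii)$.} By Theorem \ref{condicion necesaria caos operador} applied to $e^{tA_d}|_Y$, or by Theorem \ref{necessary condition chaos} for the semigroup, $A_d$ has infinitely many eigenvalues on $i\mathds{R}$. In the discrete case, $\sigma_P(e^{tA_d})=e^{t\sigma_P(A_d)}$ by Theorem \ref{spectral mapping}. Infinitely many roots of unity in this set force infinitely many eigenvalues of $A_d$ on $i\mathds{R}$, because $e^{t\lambda}\in\mathds{T}$ if and only if $\lambda\in i\mathds{R}$.

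Next we run Claims 1--3 of Theorem \ref{caos matriz A} with $\mathds{T}$ replaced by $i\mathds{R}$ and $\mathds{D}$ by $\rho\mathds{D}$:
\begin{itemize}
\item[(1)] At most $n$ values of $\lambda$ make $G_d(\lambda,\cdot)\equiv0$. So we pick $\lambda_1\in i\mathds{R}$ and $z_1\in\rho\mathds{D}$ with $G_d(\lambda_1,z_1)=0$ and $G_d(\lambda_1,\cdot)\not\equiv0$.
\item[(2)] Rouché's theorem produces, for every $\lambda$ near $\lambda_1$, a zero $z$ of $G_d(\lambda,\cdot)$ in a small disc around $z_1$.
\item[(3)] Since the resultant is holomorphic and not identically zero, its zeros are isolated. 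We therefore pick a pair $(\lambda,z)$ on $i\mathds{R}\times\rho\mathds{D}$ from Claim 2 with $z$ avoiding these zeros.
\end{itemize}

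\textbf{Main obstacle.} We expect Claim 3 to be the delicate step. We must ensure that, among the curve of solutions $\{(\lambda,z):\lambda\in i\mathds{R}\cap U\}$, some $z$ avoids the zero set of the resultant. We will argue as follows. If all such $z$ were zeros of the resultant, they would lie in a finite subset of $\overline{B(z_1,r)}$. Then some fixed $z^*$ would satisfy $G_d(\lambda,z^*)=0$ for infinitely many $\lambda$. This contradicts the fact that $G_d(\cdot,z^*)$ is a nonzero polynomial of degree $n$.

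A second subtlety is that the eigenvalues found must give points $z$ in the open disc $\rho\mathds{D}$, not merely in $\rho\overline{\mathds{D}}$. This is guaranteed because eigenvalues come from $\sigma_P$, which by Corollary \ref{espectro matriz con derivadas parciales} corresponds to $z\in\rho\mathds{D}$.
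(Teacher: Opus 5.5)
Your proposal is correct and follows essentially the paper's route. For sufficiency you use implicit-function eigenvector fields built from $e^{g(\lambda)x}$ with the Banasiak--Moszy\'nski and eigenvector-field criteria, and for necessity you use infinitely many eigenvalues on $i\mathds{R}$ together with the Rouch\'e-based Claims 1--3.

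Where you deviate, you are more careful than the paper. First, you prove $(iv)\Rightarrow(ii)$ via the point spectral mapping theorem instead of declaring $(iv)\Rightarrow(i)$ immediate; it is not immediate, since a subspace invariant under a single $e^{t_0A_d}$ need not be invariant under the whole semigroup or have dense projection. Second, your pigeonhole version of Claim 3 (finitely many zeros of the resultant in $\overline{B(z_1,r)}$, and each $z^*$ serving at most $n$ values of $\lambda$) supplies exactly the fact $z_{i_k}\neq\tilde z_0$ that the paper's subsequence argument tacitly needs.
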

\begin{proof}
    $(i)\leftrightarrow (ii)$. The proof is analogous to the one presented in Theorem \ref{caos matriz A} by noting that $\sigma_P(\partial_x)=\rho\mathds{D}$ and $\sigma(\partial_x)=\overline{\rho\mathds{D}}$ on $X_\rho^n$, and using the Desch-Schappacher-Webb criterion \ref{subcaos semigrupos} for Devaney sub-chaos of strongly continuous semigroups. \\

    $(ii)\rightarrow (iii).$ We will use the Eigenvector Field Criterion \ref{eigenvector-criterion operador}. Take $t>0$. By the hypothesis, proceeding as in the proof of Theorem \ref{caos matriz A}, there is an open neighborhood $U$ of $\lambda_0$ and $g: U_0\rightarrow \rho\mathds{D}$ holomorphic such that $G_d(\lambda, g(\lambda))=0$ for all $\lambda\in U_0$. 

    For each $\lambda\in U_0$, define the vector $v_{\lambda}$ as: 
    $$v_\lambda:=\left(e^{g(\lambda)}, \lambda e^{g(\lambda)}, ..., \lambda^{n-1}e^{g(\lambda)}\right),$$
    and observe that since $g(\lambda)\in\rho\mathds{D}$ for all $\lambda\in U$, it follows that $v_\lambda\in X_{\rho}^n$. Then we can compute:  
    \begin{align*}
    &A_d v_\lambda=\left(\lambda e^{g(\lambda)}, .., \lambda^{n-1}e^{g(\lambda)}, f_1(\partial_x)e^{g(\lambda)}+...+\lambda^{n-1}f_n(\partial_x)e^{g(\lambda)}\right)\\
    &=\left(\lambda e^{g(\lambda)}, .., \lambda^{n-1}e^{g(\lambda)}, f_1(g(\lambda))e^{g(\lambda)}+...+\lambda^{n-1}f_n(g(\lambda))e^{g(\lambda)}\right)\\
    &=\left(\lambda e^{g(\lambda)}, .., \lambda^{n-1}e^{g(\lambda)}, G_d(\lambda, g(\lambda))e^{g(\lambda)}+\lambda^n e^{g(\lambda)}\right)\\
    &=\left(\lambda e^{g(\lambda)}, .., \lambda^{n-1}e^{g(\lambda)}, \lambda^n e^{g(\lambda)}\right)=\lambda\left(e^{g(\lambda)}, .., \lambda^{n-2}e^{g(\lambda)}, \lambda^{n-1} e^{g(\lambda)}\right)=\lambda v_{\lambda}.
\end{align*}
And therefore, $v_{\lambda}$ is an eigenvector of $A_d$ associated with the eigenvalue $\lambda$.
As a consequence: 
$$e^{tA_d}v_{\lambda}=\sum_{n=0}^\infty \frac{t^n}{n!}A_d^n v_s=\sum_{n=0}^\infty \frac{t^n}{n!}(\lambda^n)v_{\lambda}=e^{t\lambda}v_{\lambda},$$
so that $v_{\lambda}$ is an eigenvector of $e^{tA_d}$ associated to the eigenvalue $e^{t\lambda}$. 
    
    Now assume without loss of generality that there is some $k\in\mathds{N}$ such that $$(k-1)\pi<t|\text{Im}(\lambda)|<(k+1)\pi \text{ for all } \lambda\in U_0.$$
   It then follows that the exponential function $e^{t\lambda}$ is univalent in $U_0$. Define the set $U:=e^{tU_0}$, which is an open set such that $U\cap\mathds{T}\neq 0$. Moreover, define the horizontal strip $S_k:=\{z\in\mathds{C}: (k-1)\pi<|\text{Im}(z)|<(k+1)\pi\}$, so that we can take the branch of the logarithm $\log: \mathds{C}\setminus{\mathds{R}_{-}}\rightarrow S_{k}$:
    $$\log(z):=\log(|z|)+i(\text{Arg}(z)+k\pi),$$
    noting that $\frac{\log}{t}:U\rightarrow U_0$ is a holomorphic function such that $\frac{ \log(e^{t\lambda})}{t}=\lambda$ for all $\lambda\in U_0$.
    
    Finally, we have that for each $s\in U$, the vector $v_{s}$: 
    $$v_s:=\left(e^{g(\log(s)/t)}, \frac{\log(s)}{t} e^{g(\log(s)/t)}, ..., \frac{\log(s)^{n-1}}{t^{n-1}}e^{g(\log(s)/t)}\right),$$
    is an eigenvector of $A_{d}$ associated to the eigenvalue $s$. And therefore, we can define the function $V:U\rightarrow X_{\rho}^n$ as $V(s):=v_s.$ By the isometric isomorphism between $X_{\rho}$ and $c_0$ it is easy to check that $V$ is a weakly holomorphic function that verifies the hypothesis of the Eigenvector Field Criterion, and since $t>0$ was arbitrary the conclusion follows.\\

$(iii)\rightarrow (iv)$ and $(iv)\rightarrow (i)$ are immediate.
\end{proof}
The following corollaries study sub-chaos for the semigroup generated by the operator $A_d$ in the particular cases in which the functions $f_i$ are either polynomial or rational functions. The characterizations are based on the discriminant of polynomials of two complex variables. Therefore, if we consider $P(\lambda,z)$ to be a polynomial of two variables, we establish the notation $\text{disc}_z(P)(\lambda)$ for the discriminant of the polynomial $P(\lambda, \cdot)$ for each fixed $\lambda\in\mathds{C}$. Thus, by the definition of the discriminant, $\text{disc}_z(P)(\lambda)$ is in turn a polynomial in the variable $\lambda$.
\begin{corollary}\label{corolario para polinomios} Let $p_1, p_2, \dots, p_n$ be polynomials such that at least one of them is nonconstant. If $\textnormal{disc}_z(G_d)(\lambda)\nequiv 0$, then there is some $\rho>0$ such that the semigroup $(T_t)_{t\geq 0}$ generated by the block matrix operator 
    $$A_d:= \begin{pmatrix}
0 & I & 0 & \dots \\
0 & 0 & I & \dots \\
\vdots & \vdots & \vdots & \ddots \\
p_1(\partial_x) & p_2(\partial_x) & \dots & p_n(\partial_x) 
\end{pmatrix}$$
is Devaney sub-chaotic on $(X_p)^n$. Moreover, the projection family $(T_t: X_{\rho}^n\rightarrow X_{\rho})_{t\geq 0}$ generated by $A$ is chaotic. 
\end{corollary}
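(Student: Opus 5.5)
The plan is to reduce the statement to Theorem \ref{caos matriz semigrupos}, checking its hypothesis and its condition $(ii)$ for a suitably large weight $\rho$. Since the $p_i$ are polynomials, they are entire, so $G_d(\lambda,z)=p_1(z)+\lambda p_2(z)+\dots+\lambda^{n-1}p_n(z)-\lambda^n$ is a polynomial in two variables and is defined on $\mathds{C}\times\rho\overline{\mathds{D}}$ for every $\rho>0$. The resultant $\textnormal{res}_\lambda(G_d,\partial_z G_d)(z)$ is then a polynomial in $z$, and it does not depend on $\rho$. The proof therefore consists of three steps: translate the discriminant hypothesis into information on this resultant, find a point $(\lambda_0,z_0)\in i\mathds{R}\times\mathds{C}$ with $G_d(\lambda_0,z_0)=0$ and nonvanishing resultant at $z_0$, and finally choose $\rho>|z_0|$. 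Once $\rho>|z_0|$, the point $z_0$ lies in $\rho\mathds{D}$ and also serves as the point $\tilde z$ required in the hypothesis of the theorem. Theorem \ref{caos matriz semigrupos} then gives $(i)$: the semigroup is sub-chaotic and the projection family is chaotic.

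The main obstacle is the first step, relating $\textnormal{disc}_z(G_d)(\lambda)\not\equiv 0$, a statement about double roots in $z$ for fixed $\lambda$, to the resultant in $\lambda$ of $G_d$ and $\partial_z G_d$. Both quantities detect the same algebraic set, namely the common zero locus $\{G_d=0,\ \partial_z G_d=0\}\subset\mathds{C}^2$.
\begin{itemize}
\item If $\textnormal{disc}_z(G_d)\not\equiv0$, then for all but finitely many $\lambda$ the polynomial $G_d(\lambda,\cdot)$ has only simple roots. Hence $G_d$ and $\partial_zG_d$ share no common factor of positive degree in $z$.
\item Since $G_d$ is monic in $\lambda$ up to sign, it has no factor depending only on $z$ either.
\item It follows that $G_d$ and $\partial_zG_d$ are coprime in $\mathds{C}[\lambda,z]$, so their common zero set is finite.
\item Consequently $\textnormal{res}_\lambda(G_d,\partial_zG_d)(z)$ vanishes at only finitely many $z$, and in particular it is a nonzero polynomial.
\end{itemize}
One degenerate case needs care: $\partial_zG_d\equiv0$ would make the discriminant vanish identically, and it is excluded because some $p_i$ is nonconstant.

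For the second step, take $\lambda\in i\mathds{R}$ and avoid the finitely many values where the discriminant vanishes and where the leading coefficient in $z$ of $G_d(\lambda,\cdot)$ vanishes. At such a $\lambda$, $G_d(\lambda,\cdot)$ is a nonconstant polynomial in $z$. It is nonconstant because its $z$-degree equals the maximal degree of the $p_i$ for generic $\lambda$, using that the $\lambda^{i-1}$ are linearly independent. By the Fundamental Theorem of Algebra it therefore has a root $z$.

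Now let $\lambda$ range over the infinitely many admissible points of $i\mathds{R}$. Each chosen $\lambda$ gives a root $z_\lambda$ with $G_d(\lambda,z_\lambda)=0$. The roots $z_\lambda$ must be infinitely many distinct values, because a fixed $z$ can be a root for at most $n$ values of $\lambda$, since $G_d(\cdot,z)$ has degree $n$. The resultant has only finitely many zeros, so some $z_0=z_{\lambda_0}$ satisfies $\textnormal{res}_\lambda(G_d,\partial_zG_d)(z_0)\neq0$. Setting $\rho>|z_0|$, exactly as in the proof of Corollary \ref{caos polinomial primer orden}, completes the argument.
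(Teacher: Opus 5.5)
Your first step has a gap: the claim that $G_d$ and $\partial_z G_d$ are coprime in $\mathds{C}[\lambda,z]$. Your first bullet correctly excludes common factors of positive degree in $z$. A nonconstant factor depending only on $z$ is one of those, so the second bullet is redundant. What remains open is a common factor $h(\lambda)$ depending only on $\lambda$. Monicity of $G_d$ in $\lambda$ does not rule it out, and any such factor of $G_d$ automatically divides $\partial_z G_d$.

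This happens under the stated hypotheses. For $n=2$, $p_1(z)=z^2$, $p_2(z)=1-z^2$, one gets $G_d(\lambda,z)=(1-\lambda)(z^2+\lambda)$, so $\textnormal{disc}_z(G_d)(\lambda)=-4\lambda(1-\lambda)^2\nequiv 0$. Yet $\partial_z G_d=2z(1-\lambda)$, so $\lambda=1$ is a common root of $G_d(\cdot,z)$ and $\partial_z G_d(\cdot,z)$ for every $z$. Hence $\textnormal{res}_\lambda(G_d,\partial_z G_d)\equiv 0$, and no point $\tilde z$ as required by Theorem \ref{caos matriz semigrupos} exists for any $\rho$. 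The reduction to that theorem therefore cannot work here.

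The paper's proof has the same hole. It asserts that $G_d(\lambda,\cdot)\nequiv 0$ for every $\lambda$ because $a_0\nequiv 0$, which fails here since $G_d(1,\cdot)\equiv 0$, so its finite set $F$ is not well defined. Outside this degenerate case your argument is correct and differs only mildly from the paper's. The paper fixes one $\lambda_0\in i\mathds{R}$ outside a finite exceptional set so that every root of $G_d(\lambda_0,\cdot)$ works. You instead count infinitely many roots against finitely many zeros of the resultant.

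The statement itself survives, because the resultant is only needed for the converse implication of the theorem, not for producing chaos. You can bypass it as follows.
\begin{enumerate}[(1)]
\item Choose $\lambda_0\in i\mathds{R}$ at which neither the leading $z$-coefficient of $G_d$ nor $\textnormal{disc}_z(G_d)$ vanishes. Take a simple root $z_0$ of $G_d(\lambda_0,\cdot)$ and any $\rho>|z_0|$.
\item The holomorphic implicit function theorem gives a holomorphic $g$ on a neighborhood $U$ of $\lambda_0$ with $G_d(\lambda,g(\lambda))=0$ and $|g|<\rho$ on $U$.
\item This $g$ is nonconstant, since $G_d(\cdot,z_0)=-\lambda^n+\cdots$ cannot vanish on an open set.
\item Then $\lambda\mapsto(e^{g(\lambda)x},\lambda e^{g(\lambda)x},\dots,\lambda^{n-1}e^{g(\lambda)x})$ is a weakly holomorphic eigenvector field of $A_d$ on $U$, with $U\cap i\mathds{R}\neq\emptyset$. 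Proposition \ref{subcaos semigrupos} then gives sub-chaos.
\item The projection family is chaotic because $\text{span}\{e^{wx}: w\in g(U)\}$ is dense in $X_\rho$. This follows from the open mapping and identity theorems, exactly as in the paper's corollary after Theorem \ref{caos matriz A}.
\end{enumerate}
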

\begin{proof}
 Let us assume without loss of generality that $p_1, p_2, \dots, p_n$ are polynomials of degree at most $m$. Then we can write: 
$$G_d(\lambda, z)=-\lambda^n+\lambda^{n-1}p_n(z)+\dots+\lambda p_2(z)+p_1(z)=a_m(\lambda)z^m+\dots+a_1(\lambda)z+a_0(\lambda),$$
where $a_0, a_1, \dots, a_m$ are polynomials of degree at most $n$. Note that $a_0(\lambda) = -\lambda^n + \dots$ is not identically zero, so $G_d(\lambda, z)$ is not identically zero as a polynomial in $z$ for any $\lambda \in \mathds{C}$. 
Consider the polynomial function $\text{disc}_z(G_d)(\lambda)$. By hypothesis and the Fundamental Theorem of Algebra, there are at most some $\lambda_1, \dots, \lambda_k$, $k\in\mathds{N}$ such that 
$$\text{disc}_z(G_d)(\lambda_i)=0, \quad i=1, 2, \dots, k.$$
Now, we also have that for each $\lambda_i$ there are at most some $z_1^i, \dots, z_m^i$ such that 
$$G_d(\lambda_i, z_j^i)=0, \hspace{2em}i=1, \dots, k; \hspace{0.5em}j=1, \dots, m.$$ 
Define the set
$$F:=\{\lambda\in\mathds{C} : G_d(\lambda, z_j^i)=0\text{ for some }i=1, \dots, k; \hspace{0.5em}j=1, \dots, m\},$$
noting that $F$ is finite. By taking $\lambda_0\in i\mathds{R}\setminus F$ we have that $\text{disc}_z(G_d)(\lambda_0)\neq 0$, and therefore, there is some $z_0\in\mathds{C}$ such that $G_d(\lambda_0, z_0)=0$ and $\frac{\partial G_d}{\partial z}(\lambda_0, z_0)\neq 0$. 
Moreover, if $\lambda\in\mathds{C}$ is such that $G_d(\lambda, z_0)=0$ then $\lambda\notin F$, and therefore $\frac{\partial G_d}{\partial z}(\lambda, z_0)\neq 0$. The latter implies that there are some $\lambda_0\in i\mathds{R}$ and $z_0\in\mathds{C}$ such that $G_d(\lambda_0, z_0)=0$ and $\text{res}_{\lambda}\left(G_d, \frac{\partial G_d}{\partial z}\right)(z_0)\neq 0$. The conclusion follows for any $\rho>0$ such that $z_0\in\rho\mathds{D}$ by condition $(ii)$ on Theorem \ref{caos matriz semigrupos}.

\end{proof}

\begin{corollary}\label{corolario para racionales}
    Let $q, p_1, p_2, \dots, p_n$ be polynomials of degree at most $m$. Moreover, let $\rho>0$ be such that 
    $$\{z\in\mathds{C}: q(z)=0\}\subset\mathds{C}\setminus\rho\overline{\mathds{D}}.$$
 Define the function $P:\mathds{C}\times \overline{\rho\mathds{D}}\rightarrow\mathds C$ as $$P(\lambda, z):=p_1(z)+\dots+\lambda^{n-1}p_n(z)-\lambda^{n}q(z)=a_m(\lambda)z^m+\dots+a_1(\lambda)z+a_0(\lambda),$$
where $a_0, a_1, ..., a_m$ are polynomial of degree at most $n$ such that $\text{disc}_z(P)(\lambda)\nequiv 0$. Then the block matrix operator 
    $$A_d:= \begin{pmatrix}
0 & I & 0 & \dots \\
0 & 0 & I & \dots \\
\vdots & \vdots & \vdots & \ddots \\
q(\partial_x)^{-1}p_1(\partial_x) & q(\partial_x)^{-1}p_2(\partial_x) & \dots & q(\partial_x)^{-1}p_n(\partial_x) 
\end{pmatrix}$$
is a bounded operator on $(X_{\rho})^n$ that generates a Devaney sub-chaotic semigroup if and only if there are some $\lambda_0\in i\mathds{R}$ and $z_0\in\rho\mathds{D}$ such that $P(\lambda_0, z_0)=0$ and $\textnormal{disc}_z(P)(\lambda_0)\neq 0$. 
\end{corollary}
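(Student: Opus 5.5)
The plan is to reduce the statement to Theorem \ref{caos matriz semigrupos}, applied with $f_i:=p_i/q$. These functions are holomorphic on a neighborhood of $\overline{\rho\mathds{D}}$, because the zeros of $q$ lie outside $\rho\overline{\mathds{D}}$.

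\emph{Boundedness.} By the hypothesis on the zeros of $q$ and the Riesz--Dunford calculus, $q(\partial_x)^{-1}=(1/q)(\partial_x)$ is a bounded operator on $X_\rho$. Hence each entry $q(\partial_x)^{-1}p_i(\partial_x)=(p_i/q)(\partial_x)$ is bounded, and so is $A_d$ on $X_\rho^n$.

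\emph{Zero sets.} The characteristic function is $G_d(\lambda,z)=\sum_i\lambda^{i-1}p_i(z)/q(z)-\lambda^n$, so
$$G_d(\lambda,z)=P(\lambda,z)/q(z).$$
Since $q\neq0$ on $\overline{\rho\mathds{D}}$, $G_d$ and $P$ have the same zeros on $\mathds{C}\times\overline{\rho\mathds{D}}$. Moreover, at a zero of $P$,
$$\partial_zG_d=\frac{\partial_zP}{q}-\frac{P\,q'}{q^2}=\frac{\partial_zP}{q},$$
so simple $z$-roots of $P$ in $\rho\mathds{D}$ are exactly simple $z$-roots of $G_d$.

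\emph{Translating the condition.} Condition (ii) of Theorem \ref{caos matriz semigrupos} reads $\textnormal{res}_\lambda(G_d,\partial_zG_d)(z_0)\neq0$. However, the proofs of Theorems \ref{caos matriz A} and \ref{caos matriz semigrupos} only use two facts: $\partial_zG_d(\lambda_0,z_0)\neq0$ at some zero with $\lambda_0\in i\mathds{R}$ (for the implicit function $g$ and the eigenvector field), and, for the converse, the non-degeneracy needed in Claims 1--3. I will therefore rewrite the argument in terms of $\textnormal{disc}_z(P)(\lambda)$, a polynomial in $\lambda$ that is not identically zero.

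\emph{Sufficiency.} Suppose $P(\lambda_0,z_0)=0$ with $\lambda_0\in i\mathds{R}$, $z_0\in\rho\mathds{D}$ and $\textnormal{disc}_z(P)(\lambda_0)\neq0$. Then $z_0$ is a simple root of $P(\lambda_0,\cdot)$, so $\partial_zG_d(\lambda_0,z_0)\neq0$. The holomorphic implicit function theorem gives $g$ on a neighborhood $U_0$ of $\lambda_0$ with values in $\rho\mathds{D}$ and $P(\lambda,g(\lambda))=0$. Exactly as in the proof of Theorem \ref{caos matriz semigrupos}, the vectors $v_\lambda=(e^{g(\lambda)x},\lambda e^{g(\lambda)x},\dots,\lambda^{n-1}e^{g(\lambda)x})$ form a weakly holomorphic eigenvector field for $A_d$ with $U_0\cap i\mathds{R}\neq\emptyset$. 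Proposition \ref{subcaos semigrupos} then yields sub-chaos.

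One point needs care here: $g$ must be nonconstant. If $g\equiv z_0$, then $P(\cdot,z_0)\equiv0$, so all $a_j$ would satisfy $\sum_j a_j(\lambda)z_0^j\equiv0$. But the coefficient of $\lambda^n$ in $P(\lambda,z_0)$ is $-q(z_0)\neq0$, so this cannot happen.

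\emph{Necessity.} By Theorem \ref{necessary condition chaos} and Corollary \ref{espectro matriz con derivadas parciales}, there are infinitely many $\lambda\in i\mathds{R}$ for which some $z\in\rho\mathds{D}$ satisfies $P(\lambda,z)=0$. Since $\textnormal{disc}_z(P)\not\equiv0$ is a polynomial, it has only finitely many zeros. Hence we may choose such a $\lambda_0\in i\mathds{R}$ with $\textnormal{disc}_z(P)(\lambda_0)\neq0$, together with a corresponding $z_0\in\rho\mathds{D}$.

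This step is in fact easier than the Rouché argument of Theorem \ref{caos matriz A}, because the point spectrum produces zeros with $z$ in the open disc directly.

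\emph{Main obstacle.} The delicate part is the case where $a_m(\lambda_0)=0$: then the degree of $P(\lambda_0,\cdot)$ drops and the discriminant is computed formally with leading coefficient $a_m$. I would handle this by also excluding the finitely many zeros of $a_m$ in the necessity direction, which is harmless since infinitely many candidates remain. In the sufficiency direction, I would note that $\textnormal{disc}_z(P)(\lambda_0)\neq0$ forces $a_m(\lambda_0)\neq0$ up to the standard convention, since the Sylvester determinant vanishes when both leading coefficients vanish.
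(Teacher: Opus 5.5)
Your proof is correct, but it is organized differently from the paper's. The paper does not re-run the eigenvector-field argument. It notes $G_d=P/q$ and that $\textnormal{res}_\lambda(G_d,\partial_zG_d)$ and $\textnormal{res}_\lambda(P,\partial_zP)$ vanish at the same points. It then invokes Theorem~\ref{caos matriz semigrupos} as a black box and proves only an algebraic equivalence. The equivalence is that there is $(\lambda_0,z_0)\in i\mathds{R}\times\rho\mathds{D}$ with $P=0$ and $\textnormal{disc}_z(P)(\lambda_0)\neq0$ if and only if there is $(\hat\lambda,\hat z)\in i\mathds{R}\times\rho\mathds{D}$ with $P=0$ and $\textnormal{res}_\lambda(P,\partial_zP)(\hat z)\neq0$. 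Each direction is proved by moving along the implicit branch $g$ to avoid a finite exceptional set.

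You bypass the resultant altogether and work with the simple-root condition directly. Sufficiency comes from the implicit branch and Proposition~\ref{subcaos semigrupos}. Necessity comes from Theorem~\ref{necessary condition chaos}, Corollary~\ref{espectro matriz con derivadas parciales}(i), and finiteness of the zero set of $\textnormal{disc}_z(P)$. You are right that no Rouch\'e argument is needed, since the point spectrum already places $z$ in the open disc.

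The paper's route, when it applies, inherits the extra conclusions of Theorem~\ref{caos matriz semigrupos}: a chaotic projection family and sub-chaos of every autonomous discretization. Your route is shorter. More importantly, it does not depend on that theorem's standing hypothesis, namely a point $\tilde z$ with $\textnormal{res}_\lambda(G_d,\partial_zG_d)(\tilde z)\neq0$. The paper never derives this from $\textnormal{disc}_z(P)\not\equiv0$, and it can fail. Take $n=2$, $q=1$, $p_1=0$, $p_2=z^2$. Then $P=\lambda z^2-\lambda^2$ and $\textnormal{disc}_z(P)=4\lambda^3\not\equiv0$. But $\lambda=0$ is a common root of $P(\cdot,z)$ and $\partial_zP(\cdot,z)$ for every $z$, so the resultant vanishes identically. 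The paper's step ``at most $m$ roots $z$ for each zero of the discriminant'' breaks at $\lambda=0$. Your argument still gives sub-chaos here, with $g(\lambda)=\sqrt{\lambda}$ near $\lambda_0=i\rho^2/2$.

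One small correction to your last paragraph. Under the usual convention, $\textnormal{disc}_z(P)(\lambda_0)\neq0$ does not force $a_m(\lambda_0)\neq0$: when $a_m=0$, the generic discriminant equals $\pm a_{m-1}^2$ times the discriminant of the degree-$(m-1)$ polynomial. What you actually use is that every root of $P(\lambda_0,\cdot)$ is simple, and that holds under either convention. In the necessity direction you also do not need to exclude zeros of $a_m$, since the target condition involves only the discriminant.
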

\begin{proof}
    We first observe that in this case the characteristic polynomial is such that
    $$G_d(\lambda, z)=\frac{P(\lambda, z)}{q(z)}.$$
    Since $q(z)\neq 0$ for all $z\in\rho\overline{\mathds{D}}$ we have that $G_d(\lambda, z)=0$ if and only if $P(\lambda, z)=0$. Moreover, it is easy to check that $\text{res}_{\lambda}(G_d, \frac{\partial G_d}{\partial z})(z)=0$ if and only if $\text{res}_{\lambda}(P, \frac{\partial P}{\partial z})(z)=0$. As a consequence, by Theorem \ref{caos matriz semigrupos} it suffices to prove that there are some $\lambda_0\in i\mathds{R}$ and $z_0\in\rho\mathds{D}$ such that $P(\lambda_0, z_0)=0$ and $\text{disc}_z(P)(\lambda_0)\neq 0$ if and only if there are some $\hat\lambda\in i\mathds{R}$ and $\hat z\in\rho\mathds{D}$ with $P(\hat\lambda, \hat z)=0$ and $\text{res}_{\lambda}{(P, \frac{\partial P}{\partial z})}(\hat z)\neq 0$. \\

    Assume that there are some $\lambda_0\in i\mathds{R}$ and $z_0\in\rho\mathds{D}$ such that $P(\lambda_0, z_0)=0$ and $\text{disc}_z{(P)}(\lambda_0)\neq 0$. As in the proof of corollary \ref{corolario para polinomios} there are at most $\lambda_1, ..., \lambda_N$ such that $\text{disc}_z{(P)}(\lambda_i)=0$ for $i=1, 2, .... N$. For each $\lambda_i$ there are at most some $z_1^i, ..., z_m^i$ such that $P(\lambda_i, z_i^j)=0$, so that we can define the set
    $$F_P:=\{\lambda\in\mathds{C}\text{ such that }P(\lambda, z_i^j)=0\text{ for some }i=1, ..., k; \hspace{0.5em}j=1,  ..., m\}.$$

    Now since $\text{disc}_z{(P)}(\lambda_0)\neq 0$ we have that $\frac{\partial P}{\partial z}(\lambda_0, z_0)\neq 0$. By the Implicit Function Theorem for holomorphic functions there is some open neighborhood $U$ of $\lambda_0$ and a holomorphic function $g:U\rightarrow\rho\mathds{D}$ such that $P(\lambda, g(\lambda))=0$ for all $\lambda\in U$. Since $F_P$ is finite, it follows that there is some $\hat\lambda\in U\cap i\mathds{R}$ such that $\hat\lambda\notin F_P$, and therefore $\text{res}_{\lambda}{(P, \frac{\partial P}{\partial z})}(g(\hat\lambda))\neq 0$, and taking $\hat z=g(\hat\lambda)$ the conclusion holds. \\

    Assume now that there are some $\hat\lambda\in i\mathds{R}$ and $\hat z\in \rho\mathds{D}$ such that $P(\hat\lambda, \hat z)=0$ and $\text{res}_{\lambda}{(P, \frac{\partial P}{\partial z})}(\hat z)\neq 0$. Then we have that $\frac{\partial P}{\partial z}(\hat\lambda, \hat z)\neq 0$ and again by the Implicit Function Theorem for holomorphic functions there is some open neighborhood $\hat U$ of $\hat\lambda$ and some holomorphic function $\hat g:\hat U\rightarrow \rho\mathds{D}$ such that $P(\lambda, \hat g(\lambda))=0$ for all $\lambda\in \hat U$. Since $\text{disc}_z(P)(\lambda)\nequiv 0$, there is at most $\lambda_1, \lambda_2, ..., \lambda_N$ such that $\text{disc}_{P(\lambda, \cdot)}(\lambda_i)=0$, $i=1, 2, ..., N$. Consequently, there is some $\lambda_0\in \hat U\cap i\mathds{R}$ such that $P(\lambda_0, \hat g(\lambda_0))=0$ and $\text{disc}_z(P)(\lambda_0)\neq 0$, and naming $z_0:=\hat g(\lambda_0)$ the conclusion follows. 
\end{proof}

We also highlight the following particular case. 

\begin{corollary}\label{corolario para racionales caso particular}
    Let $p_1, p_2, \dots, p_n$ be polynomials of degree at most $m$. Let $\alpha\in\mathds{C}$ and $\rho>0$ such that $|\alpha|\rho^k<1$. Define the function $P:\mathds{C}\times \overline{\rho\mathds{D}}\rightarrow\mathds C$ as $$P(\lambda, z):=p_1(z)+\dots+\lambda^{n-1}p_n(z)-\lambda^{n}(1-\alpha z^k)=a_l(\lambda)z^l+\dots+a_1(\lambda)z+a_0(\lambda),$$
where $a_1, a_2, ..., a_l$ are polynomial of degree at most $n$ and $l:=\max\{k, m\}$ such that $\text{disc}_z{(P)}(\lambda)\nequiv 0$. Then the block matrix operator 
    $$A_d:= \begin{pmatrix}
0 & I & 0 & \dots \\
0 & 0 & I & \dots \\
\vdots & \vdots & \vdots & \ddots \\
(I-\alpha\partial_{x}^k)^{-1}p_1(\partial_x) & (I-\alpha\partial_{x}^k)^{-1}p_2(\partial_x) & \dots & (I-\alpha\partial_{x}^k)^{-1}p_n(\partial_x) 
\end{pmatrix}$$
is a bounded operator on $(X_{\rho})^n$ that generates a Deveney sub-chaotic semigroup if and only if there are some $\lambda_0\in i\mathds{R}$ and $z_0\in\rho\mathds{D}$ such that $P(\lambda_0, z_0)=0$ and $\text{disc}_z{(P)}(\lambda_0)\neq 0$. 
\end{corollary}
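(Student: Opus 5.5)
The plan is to obtain this statement as the special case $q(z):=1-\alpha z^k$ of Corollary \ref{corolario para racionales}, so the work consists of checking that corollary's hypotheses.

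First, the location of the zeros of $q$. The zeros of $q(z)=1-\alpha z^k$ satisfy $z^k=1/\alpha$ when $\alpha\neq 0$, hence $|z|=|\alpha|^{-1/k}$. The assumption $|\alpha|\rho^k<1$ is exactly $\rho<|\alpha|^{-1/k}$. Therefore every zero of $q$ lies in $\mathds{C}\setminus\rho\overline{\mathds{D}}$. If $\alpha=0$, then $q\equiv 1$ has no zeros and the inclusion holds trivially.

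Consequently $1/q$ is holomorphic on a neighborhood of $\rho\overline{\mathds{D}}=\sigma(\partial_x)$. By the Riesz--Dunford calculus, $(I-\alpha\partial_x^k)^{-1}=(1/q)(\partial_x)$ is a well-defined bounded operator on $X_\rho$. Each entry $(I-\alpha\partial_x^k)^{-1}p_j(\partial_x)=(p_j/q)(\partial_x)$ is then bounded, and hence $A_d$ is bounded on $(X_\rho)^n$.

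Second, matching the degree data. Corollary \ref{corolario para racionales} was stated with $q,p_1,\dots,p_n$ all of degree at most $m$. Here $\deg q=k$, so I replace $m$ by $l=\max\{k,m\}$. Then $q,p_1,\dots,p_n$ all have degree at most $l$. Moreover $P(\lambda,z)=\sum_j\lambda^{j-1}p_j(z)-\lambda^n q(z)$ coincides with the polynomial $P$ of that corollary, and its coefficients $a_0,\dots,a_l$ in $z$ are polynomials in $\lambda$ of degree at most $n$. The hypothesis $\textnormal{disc}_z(P)\not\equiv 0$ is assumed verbatim. Corollary \ref{corolario para racionales} then yields the claimed equivalence directly: sub-chaos holds if and only if there exist $\lambda_0\in i\mathds{R}$ and $z_0\in\rho\mathds{D}$ with $P(\lambda_0,z_0)=0$ and $\textnormal{disc}_z(P)(\lambda_0)\neq 0$.

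The main obstacle is only a bookkeeping issue. The discriminant is computed with respect to the formal degree $l$, while the actual leading coefficient $a_l(\lambda)$ could vanish identically or at some values of $\lambda$. One would handle this the same way as in the parent corollary, through its reduction via Theorem \ref{caos matriz semigrupos} and the finiteness of the exceptional set of $\lambda$. Since that argument only uses that $\textnormal{disc}_z(P)$ is a nonzero polynomial with finitely many roots, it transfers unchanged.
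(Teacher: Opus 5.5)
Your proposal is correct and follows the paper's route: the paper gives no separate argument and presents this result as the special case $q(z)=1-\alpha z^k$ of Corollary~\ref{corolario para racionales}. There, $|\alpha|\rho^k<1$ puts the zeros of $q$, all of modulus $|\alpha|^{-1/k}$, outside $\rho\overline{\mathds{D}}$, and the common degree bound becomes $l=\max\{k,m\}$, exactly as you check. Your closing bookkeeping remark is also inherited verbatim from the parent corollary, whose step ``$\textnormal{disc}_z(P)(\lambda_0)\neq 0\Rightarrow \partial_z P(\lambda_0,z_0)\neq 0$'' tacitly needs $P(\lambda_0,\cdot)\nequiv 0$ and can fail when $l=1$; that is a caveat on the stated result itself, not on your reduction.
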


\begin{remark}\label{condicion suficiente caos caso particular} If there are some $\lambda_0\in i\mathds{R}$, $0<r\leq \rho$ and $j\in\{1, 2, ..., l\}$ such that $\textnormal{disc}_z{(P)}(\lambda_0)\neq 0$ and 
$$|a_0(\lambda_0)|+ \sum_{i\neq j}|a_i(\lambda_0)|r^i<|a_j(\lambda_0)|r^j,$$
by Rouche's Theorem we have that $P(\lambda_0, \cdot)$ has at least $j$ distinct roots on $r\mathds{D}$. Hence by Corollary \ref{corolario para racionales} the block operator matrix $A_d$ generates a sub-chaotic semigroup on $X_{\rho}^n$.  
\end{remark}
 As in the first order case, we now apply the previous results to the analysis of the associated solution semigroups in equations \eqref{caso ecuacion polinomial} and \eqref{caso ecuación racional}.

 For the first equation, we can write system \eqref{sistema ecuaciones caso polinomico} in the form of the following abstract Cauchy problem: 
 \begin{equation}\label{Cauchy orden n caso polinomial}
     \frac{\partial}{\partial t}\begin{pmatrix}
         u_1\\
         u_2\\
         \vdots\\
         u_n
     \end{pmatrix}=\begin{pmatrix}
0 & I & 0 & \dots \\
0 & 0 & I & \dots \\
\vdots & \vdots & \vdots & \ddots \\
p_1(\partial_x) & p_2(\partial_x) & \dots & p_n(\partial_x) 
\end{pmatrix}\begin{pmatrix}
    u_1\\
    u_2\\
    \vdots\\
    u_n
\end{pmatrix},\quad \begin{pmatrix}
    u_1(0)\\
    u_2(0)\\
    \vdots\\
    u_n(0)
\end{pmatrix}\in X_{\rho}^n,\\
 \end{equation}
 where $p_i(\partial_x)=-\sum_{j=0}^ma_{i-1, j}\partial_x^j$ for $i=1, 2, ..., n$. The following result is a direct consequence of corollary \ref{corolario para polinomios}. 
 \begin{corollary}
     Let a PDE be such that its highest-order time derivative is of order $n\in\mathds{N}$ and appears without mixed spatial derivatives, as in the form of equation \eqref{caso ecuacion polinomial}. Then there is some $\rho_0>0$ such that the projection family of operators associated to the solution semigroup is Devaney chaotic on $X_{\rho}^n$ for all $\rho>\rho_0$. 
 \end{corollary}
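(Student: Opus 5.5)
The plan is to reduce the statement to Corollary~\ref{corolario para polinomios}, applied to the polynomials $p_i(z)=-\sum_{j=0}^m a_{i-1,j}z^j$ that come from rewriting \eqref{caso ecuacion polinomial} as the abstract Cauchy problem \eqref{Cauchy orden n caso polinomial}. Since $\partial_x$ is bounded on every $X_\rho$, the block matrix $A_d$ is bounded on $X_\rho^n$. It therefore generates the uniformly continuous solution semigroup $e^{tA_d}$, whose first-coordinate projection family is exactly the projection family in the statement. The characteristic polynomial is
$$G_d(\lambda,z)=p_1(z)+\lambda p_2(z)+\dots+\lambda^{n-1}p_n(z)-\lambda^n .$$

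\emph{Step 1: the output of the earlier proof.} The proof of Corollary~\ref{corolario para polinomios} produces a specific pair $\lambda_0\in i\mathds{R}$, $z_0\in\mathds{C}$ with $G_d(\lambda_0,z_0)=0$ and $\textnormal{res}_\lambda\big(G_d,\tfrac{\partial G_d}{\partial z}\big)(z_0)\neq 0$. Crucially, this pair is chosen independently of $\rho$. Set $\rho_0:=|z_0|$. For every $\rho>\rho_0$ we then have $z_0\in\rho\mathds{D}$. The same point $z_0$ also witnesses the resultant hypothesis of Theorem~\ref{caos matriz semigrupos} (take $\tilde z=z_0$). Condition (ii) of that theorem therefore holds for all $\rho>\rho_0$ simultaneously, and part (i) gives chaos of the projection family on $X_\rho^n$. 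This is the only point where the argument goes beyond the corollary, which as stated asserts a single $\rho$: here we need to observe that the corollary's proof works verbatim for every $\rho$ with $z_0\in\rho\mathds{D}$.

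\emph{Step 2: the discriminant hypothesis.} Corollary~\ref{corolario para polinomios} also requires that $\textnormal{disc}_z(G_d)\not\equiv0$ and that at least one $p_i$ is nonconstant. The latter holds whenever the equation genuinely involves $\partial_x$, which is implicitly assumed. Handling the former is the main obstacle. If the discriminant vanishes identically, then $G_d$, viewed as a polynomial in $z$ over $\mathds{C}(\lambda)$, has a repeated factor. In that case I would factor $G_d=\prod H_k^{e_k}$ into distinct irreducible factors and work with one factor $H$ that depends on both $\lambda$ and $z$. Such a factor exists: the coefficient of $\lambda^n$ in $G_d$ is $-1$, so $G_d$ cannot be independent of $\lambda$, and because some $p_i$ is nonconstant, $G_d$ is not independent of $z$ either.

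For this $H$, $\textnormal{disc}_z(H)\not\equiv0$, so the argument of Corollary~\ref{corolario para polinomios} yields $\lambda_0\in i\mathds{R}$ and $z_0$ with $H(\lambda_0,z_0)=0$ and $\partial_zH(\lambda_0,z_0)\neq0$. The implicit function theorem then gives a holomorphic $g$ with $G_d(\lambda,g(\lambda))=0$ near $\lambda_0$. This is all the eigenvector field construction in Theorem~\ref{caos matriz semigrupos} actually uses, together with Proposition~\ref{subcaos semigrupos}. Density of $\pi(Y)$ follows as in the corollary after Theorem~\ref{caos matriz A}, by the open mapping theorem applied to $g$. If $g$ were constant, I would choose a different $\lambda_0$, since $H$ depends genuinely on $z$.
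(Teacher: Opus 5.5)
Your proposal is correct. Step~1 is exactly the paper's route: the paper proves this corollary in one line, as a direct consequence of Corollary~\ref{corolario para polinomios}. That corollary's proof already ends with ``for any $\rho>0$ such that $z_0\in\rho\mathds{D}$'', so the uniformity in $\rho$ you isolate is implicit there.

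Step~2 is where you go beyond the paper. The statement does not assume $\textnormal{disc}_z(G_d)\not\equiv 0$, but the paper's one-line reduction needs it, and the excluded case does occur. For $(\partial_t-\partial_x)^2u=0$ one has $p_1(z)=-z^2$, $p_2(z)=2z$ and $G_d(\lambda,z)=-(\lambda-z)^2$. The discriminant then vanishes identically, and even the resultant hypothesis of Theorem~\ref{caos matriz semigrupos} fails at every $\tilde z$. Your repair works: pass to an irreducible factor $H$ and feed the implicit-function branch $g$ directly into Proposition~\ref{subcaos semigrupos}. In the example, $H=\lambda-z$ and $g(\lambda)=\lambda$. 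This also shows that sufficiency only needs a local holomorphic root branch through $i\mathds{R}\times\rho\mathds{D}$; the resultant and discriminant conditions matter only for the converse. You are also right that some $p_i$ must be nonconstant. Otherwise $\sigma_P(A_d)$ is finite, and Theorem~\ref{necessary condition chaos} excludes chaos.

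Two justifications should be tightened.

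First, ``$G_d$ depends on both variables'' does not by itself give an irreducible factor depending on both; consider $(\lambda-1)(z-1)$. The correct reason is that $G_d$ is, up to sign, monic of degree $n$ in $\lambda$. Hence every factor has constant leading coefficient in $\lambda$, and no nonconstant factor lies in $\mathds{C}[z]$. Some irreducible factor has positive $z$-degree because some $p_i$ is nonconstant, and that factor must therefore involve $\lambda$ too.

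Second, your fallback ``choose a different $\lambda_0$ if $g$ is constant'' is unnecessary, and its stated reason is the wrong one. If $g\equiv c$ near $\lambda_0$, then $G_d(\cdot,c)$ would vanish on an open set. That is impossible, since its $\lambda^n$-coefficient is $-1$; this is the paper's own argument in Theorem~\ref{caos matriz A}. And if it did happen, $z-c$ would divide $H$, so no other choice of $\lambda_0$ would help.
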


 Similarly, for equation \eqref{caso ecuación racional} we define the polynomials $p_i(z)=-\sum_{j=0}^ma_{i-1, j}z^j$ for $i=1, 2, ..., n$ and $q(z)=I+\sum_{j=1}^m a_{n, j}z^j$. It follows that for all $\rho>0$ such that 
$$\{z\in\mathds{C}: q(z)=0\}\subset \mathds{C}\setminus\overline{\rho\mathds{D}},$$
the operator $q(\partial_x)^{-1}=(1/q)(\partial_x)$ is well defined on $X_{\rho}$. Hence we can express system \eqref{sistema ecuaciones caso racional} as in the following abstract Cauchy problem: 
\begin{equation}\label{Cauchy orden n caso racional}
     \frac{\partial}{\partial t}\begin{pmatrix}
         u_1\\
         u_2\\
         \vdots\\
         u_n
     \end{pmatrix}=\begin{pmatrix}
0 & I & 0 & \dots \\
0 & 0 & I & \dots \\
\vdots & \vdots & \vdots & \ddots \\
(p_1/q)(\partial_x) & (p_2/q)(\partial_x) & \dots & (p_n/q)(\partial_x) 
\end{pmatrix}\begin{pmatrix}
    u_1\\
    u_2\\
    \vdots\\
    u_n
\end{pmatrix},\quad \begin{pmatrix}
    u_1(0)\\
    u_2(0)\\
    \vdots\\
    u_n(0)
\end{pmatrix}\in X_{\rho}^n.\\
 \end{equation}
 Applying Corollary \ref{corolario para racionales} we get the following result. 
\begin{corollary}
    Let a PDE be as in the form of equation \eqref{caso ecuación racional} and let $\rho>0$ be such that 
    $$\{z\in\mathds{C}: q(z)=0\}\subset\mathds{C}\setminus\rho\overline{\mathds{D}}.$$
 Define the function $P:\mathds{C}\times \overline{\rho\mathds{D}}\rightarrow\mathds C$ as $$P(\lambda, z):=p_1(z)+\dots+\lambda^{n-1}p_n(z)-\lambda^{n}q(z)=a_m(\lambda)z^m+\dots+a_1(\lambda)z+a_0(\lambda),$$
where $a_0, a_1, ..., a_m$ are polynomial of degree at most $n$. Then the projection family of operators associated to the solution semigroup is chaotic if and only if there are some $\lambda_0\in i\mathds{R}$ and $z_0\in\rho\mathds{D}$ such that $P(\lambda_0, z_0)=0$ and $\text{disc}_{P(\lambda, \cdot)}(\lambda_0)\neq 0$.  
\end{corollary}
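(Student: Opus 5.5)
The plan is to reduce this corollary to Corollary \ref{corolario para racionales}, applied to the abstract Cauchy problem \eqref{Cauchy orden n caso racional}. The first step is to check that the hypotheses of that corollary hold here. By assumption, every zero of $q$ lies in $\mathds{C}\setminus\rho\overline{\mathds{D}}$. Hence $1/q$ is holomorphic on a neighborhood of $\sigma(\partial_x)=\rho\overline{\mathds{D}}$, and the Riesz--Dunford calculus makes $q(\partial_x)^{-1}=(1/q)(\partial_x)$ a bounded operator on $X_\rho$ that commutes with each $p_i(\partial_x)$. It follows that $(p_i/q)(\partial_x)=q(\partial_x)^{-1}p_i(\partial_x)$. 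The block operator in \eqref{Cauchy orden n caso racional} is therefore exactly the operator $A_d$ of Corollary \ref{corolario para racionales}, and its characteristic function is $G_d=P/q$ on $\mathds{C}\times\rho\overline{\mathds{D}}$.

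The second step is to identify the solution semigroup of the PDE. Since $A_d$ is bounded, the solution semigroup of system \eqref{sistema ecuaciones caso racional} is $e^{tA_d}$. The projection family attached to the original equation is then $(\pi e^{tA_d})_{t\ge0}$, because $u=u_1$.

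The third step is to match the notions of chaos involved. Corollary \ref{corolario para racionales} characterises sub-chaos of $e^{tA_d}$. The present statement instead concerns chaos of the projection family, which is stronger by Definition \ref{projfam}. I would invoke Theorem \ref{caos matriz semigrupos}, whose item (i) is precisely this stronger property, to obtain the chain
\[
\text{projection family chaotic} \iff (ii)\text{ of Theorem \ref{caos matriz semigrupos}} \iff \exists\,\lambda_0\in i\mathds{R},\ z_0\in\rho\mathds{D}:\ P(\lambda_0,z_0)=0,\ \textnormal{disc}_z(P)(\lambda_0)\neq0 .
\]
The second equivalence is the one established inside the proof of Corollary \ref{corolario para racionales}. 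That argument converts between $\textnormal{res}_\lambda(P,\partial_zP)(z_0)\neq 0$ and $\textnormal{disc}_z(P)(\lambda_0)\neq0$ using the implicit function theorem together with the finiteness of the exceptional sets. It also uses that $P$ and $G_d$ have the same zeros in $\rho\overline{\mathds{D}}$ and that their $\lambda$-resultants vanish simultaneously, because $q$ does not vanish there.

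The main obstacle is a hypothesis mismatch. Theorem \ref{caos matriz semigrupos} requires some $\tilde z\in\rho\mathds{D}$ with $\textnormal{res}_\lambda(G_d,\partial_zG_d)(\tilde z)\neq0$, and Corollary \ref{corolario para racionales} assumes $\textnormal{disc}_z(P)\not\equiv0$. Neither condition is stated explicitly in the present corollary. For the direction ``condition $\Rightarrow$ chaos'' this causes no difficulty: the given point yields, via the implicit function argument, a point $\hat z$ with nonzero resultant, and the implication (ii)$\Rightarrow$(i) only needs such a point. For the converse I would argue that the condition $\textnormal{disc}_z(P)(\lambda_0)\neq0$ appearing in the conclusion implicitly forces $\textnormal{disc}_z(P)\not\equiv0$. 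Failing that, I would add the nondegeneracy assumption $\textnormal{disc}_z(P)\not\equiv0$ as the standing hypothesis inherited from Corollary \ref{corolario para racionales}.
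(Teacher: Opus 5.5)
Your reduction is the paper's own: its entire proof is ``Applying Corollary~\ref{corolario para racionales}''. Your detour through item (i) of Theorem~\ref{caos matriz semigrupos} is a correct refinement, needed because Corollary~\ref{corolario para racionales} only speaks of sub-chaos. The ``if'' direction also works without extra hypotheses, as you say.

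What fails is your first option for the converse. In the ``only if'' direction, $\textnormal{disc}_z(P)(\lambda_0)\neq 0$ is what you must prove, so you cannot use it to get $\textnormal{disc}_z(P)$ not identically zero. No other argument can do this either, because without that hypothesis the ``only if'' direction is false. Take $u_{tt}-2u_{tx}+u_{xx}=0$, i.e.\ $n=m=2$, $a_{1,1}=-2$, $a_{0,2}=1$ and all other coefficients $0$. Then $q\equiv 1$, $p_1(z)=-z^2$, $p_2(z)=2z$ and $P(\lambda,z)=-(z-\lambda)^2$. So $\textnormal{disc}_z(P)\equiv 0$, and the right-hand condition never holds. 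However, $\lambda\mapsto(e^{\lambda x},\lambda e^{\lambda x})$, $|\lambda|<\rho$, is a weakly holomorphic eigenvector field of $A_d$ on $\rho\mathds{D}$, and $\rho\mathds{D}$ meets $i\mathds{R}$. By Proposition~\ref{subcaos semigrupos} the semigroup is chaotic on $Y=\overline{\textnormal{span}}\{(e^{\lambda x},\lambda e^{\lambda x}):|\lambda|<\rho\}$. Moreover $\pi(Y)\supseteq\textnormal{span}\{e^{\lambda x}:|\lambda|<\rho\}$, which is dense in $X_\rho$. Hence the projection family is chaotic for every $\rho>0$. If you want a genuinely nontrivial $q$, apply $1-\partial_x^2$ to the equation: for $\rho<1$ this gives the same $A_d$ with $q(z)=1-z^2$.

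So your fallback is mandatory: assume that $\textnormal{disc}_z(P)$ is not identically zero, which is exactly what the paper silently imports from Corollary~\ref{corolario para racionales}. With this assumption, the converse is quicker than going through Theorem~\ref{caos matriz semigrupos}:
\begin{itemize}
\item sub-chaos gives infinitely many $\lambda\in\sigma_P(A_d)\cap i\mathds{R}$, by Theorem~\ref{necessary condition chaos};
\item each such $\lambda$ has some $z\in\rho\mathds{D}$ with $P(\lambda,z)=0$, by Corollary~\ref{espectro matriz con derivadas parciales}, since $q\neq 0$ on $\rho\overline{\mathds{D}}$;
\item only finitely many of these $\lambda$ are roots of the nonzero polynomial $\textnormal{disc}_z(P)$.
\end{itemize}
This route also avoids deducing the resultant hypothesis of Theorem~\ref{caos matriz semigrupos} from the discriminant hypothesis, which is not automatic. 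If $P(\mu,\cdot)\equiv 0$ for some $\mu$, then $\textnormal{res}_\lambda(G_d,\partial G_d/\partial z)$ vanishes identically even when $\textnormal{disc}_z(P)$ does not.
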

\begin{remark} If $q(\partial_x):=I-\alpha \partial_x^k$ for some $\alpha\in\mathds{C}$ and $k\in\mathds{N}$, applying Corollary \ref{corolario para racionales caso particular} we characterize chaos for all $0<\rho<\frac{1}{|\alpha|^{1/k}}$. Moreover, Remark \ref{condicion suficiente caos caso particular} provides a sufficient condition in this particular case. 
\end{remark}
\section{Chaos in spatial semidiscretizations}
In this section we establish a connection between the analysis of PDEs in Herzog spaces of analytic functions and their spatial semi-discretization. For a fixed discretization step $h>0$, we consider the following finite differences: 
\begin{align*}
    &\partial_xu(t, nh)\approx\Delta_h^+u(t, n):=\frac{u(t, n+1)-u(t, n)}{h}\\
    &\partial_xu(t, nh)\approx\Delta_h^-u(t, n):=\frac{u(t, n)-u(t, n-1)}{h},
\end{align*}
which correspond to the forward and backward discretization schemes respectively. Furthermore, assuming that for any $t>0$, $u(t, \cdot)\in\ell^p(\mathds{N}_0)$ (for $1\leq p<\infty$) or $c_0(\mathds{N}_0)$, these operators can be expressed as: 
$$\Delta_h^+=\frac{B-I}{h} \hspace{1em}\text{ and }\hspace{1em} \Delta_h^-=\frac{I-F}{h},$$ where $F$ and $B$ are the forward and backward shift operators. As a consequence, we can characterize the spectrum of these operators: 
\begin{equation}\label{espectro hacia adelante}
    \sigma(\Delta_h^+)=\overline{B\left(\frac{-1}{h}, \frac{1}{h}\right)}, \quad \sigma_P(\Delta_h^+)=B\left(\frac{-1}{h}, \frac{1}{h}\right);
\end{equation}
\begin{equation}\label{espectro hacia atrás}
    \sigma(\Delta_h^-)=\overline{B\left(\frac{1}{h}, \frac{1}{h}\right)}, \quad \sigma_P(\Delta_h^{-})=\emptyset.
\end{equation}
Hence, for $h>0$, we will consider the spatial semi-discretization of systems \eqref{sistema ecuaciones caso polinomico} and \eqref{sistema ecuaciones caso racional}, where $\Delta_h$ represents either the forward $(\Delta_h^+)$ or backward $(\Delta_h^-)$ difference scheme. 
\begin{equation}\label{sistema discreto ecuaciones caso polinomico}
\begin{cases}
    &u_1=u\\
    &\partial_t u_1=u_2\\
    &\vdots\\
    &\partial_t u_{n-1}=u_n\\
    &\partial_tu_n=-\sum_{j=0}^ma_{n-1, j}\Delta_h^ju_n-...-\sum_{j=0}^ma_{0, j}\Delta_h^j u_1
\end{cases},
\end{equation} 
\begin{equation}\label{sistema discreto ecuaciones caso racional}
\begin{cases}
    &u_1=u\\
    &\partial_t u_1=u_2\\
    &\vdots\\
    &\partial_t u_{n-1}=u_n\\
&\partial_t(I+\sum_{j=1}^ma_{n,j}\Delta_h^j)u_n=-\sum_{j=0}^ma_{n-1, j}\Delta_h^ju_n-...-\sum_{j=0}^ma_{0, j}\Delta_h^j u_1
\end{cases}.
\end{equation}

\subsection{Backward difference scheme}
Consider the backward difference scheme $\Delta_h^-$ as an operator on $X=\ell^p(\mathds{N}_0)$ or $c_0(\mathds{N}_0)$. Then using functional calculus and the fact that $F^*=B$ we have that the adjoint operator $(\Delta_h^-)^*$ on $X^*=\ell^q(\mathds{{N}}_0), 1\leq q\leq \infty,$ is such that 
$$(\Delta_h^-)^*=\frac{I-B}{h}.$$
As a consequence, we have that 
\begin{equation}\label{ec espectro adjunto 1}
    \sigma_P((\Delta_h^-)^*)=B\left(\frac{1}{h}, \frac{1}{h}\right).
\end{equation}
Now define the polynomial functions $p_1, ..., p_n, q$ as 
$$p_i(z)=-\sum_{j=0}^m a_{i-1, j}z^j, i=1, 2, ..., n; \quad q(z)=1+\sum_{j=1}^m a_{n, j}z^j.$$
Then if $q$ is such that 
$$\{z\in \mathds{C}: q(z)=0\}\subset \mathds{C}\setminus \overline{B(1/h, 1/h)},$$
we can write systems \eqref{sistema discreto ecuaciones caso polinomico} and \eqref{sistema discreto ecuaciones caso racional} as
\begin{equation}\label{Cauchy orden n disretizacion hacia atrás}
     \frac{\partial}{\partial t}\begin{pmatrix}
         u_1\\
         u_2\\
         \vdots\\
         u_n
     \end{pmatrix}=\begin{pmatrix}
0 & I & 0 & \dots \\
0 & 0 & I & \dots \\
\vdots & \vdots & \vdots & \ddots \\
(p_1/q)(\Delta_h^-) & (p_2/q)(\Delta_h^-) & \dots & (p_n/q)(\Delta_h^-) 
\end{pmatrix}\begin{pmatrix}
    u_1\\
    u_2\\
    \vdots\\
    u_n
\end{pmatrix},\quad \begin{pmatrix}
    u_1(0)\\
    u_2(0)\\
    \vdots\\
    u_n(0)
\end{pmatrix}\in X^n.\\
 \end{equation}
We note that, in this case, we are considering system \eqref{sistema discreto ecuaciones caso polinomico} to be a particular case of \eqref{sistema discreto ecuaciones caso racional} with $q=1$. We can define the block operator matrix $A_h^-:X^n\rightarrow X^n$ as the coefficient matrix of system \eqref{Cauchy orden n disretizacion hacia atrás}, so that the solution can be expressed in terms of the uniformly continuous semigroup $(e^{tA_h^-})_{t\geq 0}$. The following Theorem studies the dynamics in terms of chaos of this solution semigroup.
 
 \begin{theorem}\label{backward semidiscretization} The semigroup $(e^{tA_h^-})_{t\geq 0}$ fails to be chaotic on $X^n$ ($X=\ell^ p(\mathds{N}_0)$ or $c_0(\mathds{N}_0))$ for any $h>0$. 
 \end{theorem}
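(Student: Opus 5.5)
Since $\Delta_h^-$ has empty point spectrum, the block matrix $A_h^-$ should have no eigenvalues at all, and Theorem \ref{necessary condition chaos} then rules out chaos (indeed even sub-chaos). The plan is to show $\sigma_P(A_h^-)=\emptyset$, and more specifically $\sigma_P(A_h^-)\cap i\mathds{R}=\emptyset$.

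\textbf{Step 1: reduce to a scalar operator.} Repeat the computation in the proof of Theorem \ref{espectro de la matriz}(i) with $f_i=(p_i/q)(\Delta_h^-)$. That argument is purely algebraic: it only uses the companion structure of the matrix and that the $f_i$ commute. It shows that $A_h^-v=\lambda v$ with $v=(u_1,\dots,u_n)\neq0$ forces $u_k=\lambda^{k-1}u_1$. In particular $u_1\neq 0$, and
$$F_\lambda u_1:=\Big(\sum_{i=1}^n\lambda^{i-1}(p_i/q)(\Delta_h^-)-\lambda^n I\Big)u_1=0 .$$
Hence $\lambda\in\sigma_P(A_h^-)$ if and only if $0\in\sigma_P(F_\lambda)$. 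Multiplying by the invertible operator $q(\Delta_h^-)$, this is equivalent to the polynomial operator $P_\lambda(\Delta_h^-)$ having nontrivial kernel, where $P_\lambda(z):=\sum_i\lambda^{i-1}p_i(z)-\lambda^n q(z)$.

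\textbf{Step 2: show $P_\lambda(\Delta_h^-)$ is injective.} Write $\Delta_h^-=(I-F)/h$, which is a polynomial in the forward shift $F$, so $P_\lambda(\Delta_h^-)=R_\lambda(F)$ for some polynomial $R_\lambda$.

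Suppose first that $R_\lambda$ is nonconstant, and factor $R_\lambda(F)=c\prod_k(F-\mu_k I)$. Each factor $F-\mu I$ is injective on $X$: if $Fx=\mu x$, the first coordinate gives $\mu x_0=0$, and the recursion $x_{j-1}=\mu x_j$ then gives $x=0$ both for $\mu=0$ and for $\mu\neq0$. This is exactly the fact $\sigma_P(F)=\emptyset$, which the paper records as $\sigma_P(\Delta_h^-)=\emptyset$. A product of injective operators is injective, so $R_\lambda(F)$ is injective when $c\neq0$.

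It remains to handle the degenerate case where $R_\lambda$ is constant, or where $P_\lambda\equiv 0$ identically in $z$. I expect this to be the main obstacle.
\begin{itemize}
\item If $R_\lambda$ is a nonzero constant, $R_\lambda(F)$ is a nonzero multiple of $I$, hence injective.
\item If $P_\lambda\equiv0$, every vector lies in the kernel, so such $\lambda$ would be eigenvalues. The set of such $\lambda$ is finite, since the coefficient of $z^0$ in $P_\lambda$ is $-\lambda^n+\dots$, a nonzero polynomial in $\lambda$ whose degree $n$ part cannot cancel because $q(0)=1$.
\end{itemize}
So at most finitely many $\lambda$ are eigenvalues. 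This already suffices: Theorem \ref{necessary condition chaos} requires $\sigma_P(A_h^-)\cap i\mathds{R}$ to be infinite for sub-chaos. In the nondegenerate situation, with the $p_i$ not all constant, one gets $\sigma_P(A_h^-)=\emptyset$ outright.

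\textbf{Step 3: conclude.} Chaos of $(e^{tA_h^-})_{t\ge0}$ on $X^n$ would in particular be sub-chaos, with $Y=X^n$. Theorem \ref{necessary condition chaos} would then give infinitely many eigenvalues of $A_h^-$ on $i\mathds{R}$, contradicting Step 2. Since $h>0$ was arbitrary, the semigroup is never chaotic (in fact not even sub-chaotic), and the same holds for the projection family.
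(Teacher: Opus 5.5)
Your proof is correct, but it takes a different route from the paper's. The paper works with the adjoint. Using $(\Delta_h^-)^*=(I-B)/h$, whose point spectrum is the whole disc $B(1/h,1/h)$, it builds an explicit eigenvector of $(A_h^-)^*$ through the operator $G_{h,\lambda}$ and the fundamental theorem of algebra. It then invokes the fact that a hypercyclic semigroup cannot have a generator whose adjoint has an eigenvalue. You work with $A_h^-$ itself: from $\sigma_P(F)=\emptyset$ you show that $\sigma_P(A_h^-)=\{\lambda : P_\lambda\equiv 0\}$, which has at most $n$ points, and then apply Theorem~\ref{necessary condition chaos}.

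The two routes prove different things.
\begin{itemize}
\item The paper's argument gives the stronger conclusion that $(e^{tA_h^-})_{t\ge0}$ is not even hypercyclic. It says nothing about sub-chaos, however, so on its own it does not exclude chaos of the projection family in the sense of Definition~\ref{projfam}. That is the notion the paper actually uses elsewhere, for example in the applications.
\item Your argument excludes sub-chaos, and hence projection-family chaos, and needs no nonconstancy assumption on the $p_i$. It does not decide hypercyclicity.
\end{itemize}

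One aside in your Step 2 is false, though harmless. The claim that ``with the $p_i$ not all constant, one gets $\sigma_P(A_h^-)=\emptyset$ outright'' fails in general. Take $n=2$, $q=1$, $p_1(z)=z$, $p_2(z)=1-z$. Then $P_\lambda(z)=(1-\lambda)(z+\lambda)$ vanishes identically at $\lambda=1$, and indeed $A_h^-(u,u)=(u,u)$ for every $u\in X$; the underlying equation is $(\partial_t-1)(\partial_t+\partial_x)u=0$. Your conclusion uses only the finiteness of $\sigma_P(A_h^-)$, so the proof stands; just drop that sentence.
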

 \begin{proof}
     We show that the adjoint operator $(A_h^-)^*$ has eigenvalues, which proves that the semigroup cannot be hypercyclic \cite[Lemma 7.14]{Alfred}. 
     
     Given a block-operator matrix
     $$A=\begin{pmatrix}
         A_{11}&\dots &A_{1n}\\
         \vdots & \ddots &\vdots\\
         A_{n1}&\dots &A_{nn}
     \end{pmatrix}, \quad A_{ij}: X\rightarrow X, \text{ for all }i, j=1, ..., n,$$
     we can compute the adjoint operator $A^*: (X^n)^*$ as
     $$A^*=\begin{pmatrix}
         A_{11}^*&\dots &A_{n1}^*\\
         \vdots & \ddots &\vdots\\
         A_{1n}^*&\dots &A_{nn}^*
     \end{pmatrix}.$$
     Hence, using functional calculus, the adjoint operator of the block-operator matrix $A_h^-$ is as follows: 
     \begin{align*}
         &(A_h^-)^*=\begin{pmatrix}
0 & 0& \dots &(p_1/q)(\Delta_h^-)^*\\
I & 0 & \dots & (p_2/q)(\Delta_h^-)^* \\
\vdots & \ddots & \vdots & \vdots \\
 0& \dots & I & (p_n/q)(\Delta_h^-)^* 
\end{pmatrix}=\begin{pmatrix}
0 & 0& \dots &(p_1/q)((\Delta_h^-)^*)\\
I & 0 & \dots & (p_2/q)((\Delta_h^-)^*) \\
\vdots & \ddots & \vdots & \vdots \\
 0& \dots & I & (p_n/q)((\Delta_h^-)^*) 
\end{pmatrix}\\
&=\begin{pmatrix}
0 & 0& \dots &(p_1/q)(\frac{1-B}{h})\\
I & 0 & \dots & (p_2/q)(\frac{1-B}{h}) \\
\vdots & \ddots & \vdots & \vdots \\
 0& \dots & I & (p_n/q)(\frac{1-B}{h}) 
\end{pmatrix}=\begin{pmatrix}
0 & 0& \dots &g_{h, 1}(B)\\
I & 0 & \dots & g_{h, 2}(B) \\
\vdots & \ddots & \vdots & \vdots \\
 0& \dots & I & g_{h, n}(B) 
\end{pmatrix},
\end{align*}
where $g_{h, i}(z):=\frac{p_i(\frac{1-z}{h})}{q(\frac{1-z}{h})}$ is holomorphic on a neighborhood of $\sigma(B)$ for all $i=1, ..., n$. 

Now define for $\lambda\neq 0$ the operator $G_{h, \lambda}: X^*\rightarrow X^*$ as
$$G_{h, \lambda}:=\left(\sum_{i=1}^{n} \frac{g_{h, i}(B)}{\lambda^{n-i}}\right)-\lambda I.$$

 We claim that for any $\lambda\neq 0$ such that $0\in\sigma_P(G_{h, \lambda})$ then $\lambda\in \sigma_p((A_h^-)^*)$. Indeed, take $u_{0}\in X^*$ such that $G_{h, \lambda}u_0=0$ and define the vector $v_{\lambda}\in (X^*)^n$ as
$$v_{\lambda}:=\left(\frac{g_{h, 1}(B)}{\lambda}u_0,\left(\frac{g_{h, 1}(B)}{\lambda^2}+\frac{g_{h, 2}(B)}{\lambda}\right)u_0 ,  ..., \sum_{i=1}^{n-1}\frac{g_{h, i}(B)}{\lambda^{n-i}}u_0, u_0\right).$$
Then we can compute 
$$(A_h^-)^*v_{\lambda}=\begin{pmatrix}
    (g_{h, 1}(B))u_0\\
    \left(\frac{g_{h, 1}(B)}{\lambda}+g_{h, 2}(B)\right)u_0\\
    \vdots
    \\
    \sum_{i=1}^{n} \frac{g_{h, i}(B)}{\lambda^{n-i}}u_0
\end{pmatrix}=\begin{pmatrix}
    (g_{h, 1}(B))u_0\\
    \left(\frac{g_{h, 1}(B)}{\lambda}+g_{h, 2}(B)\right)u_0\\
    \vdots
    \\
    G_{h, \lambda}u_0+\lambda u_0
\end{pmatrix}=\lambda v_{\lambda},$$
and the claim holds. 

Now the conclusion follows by proving that for any $h>0$ we can find $\lambda\neq 0$ such that $0\in\sigma_P(G_{h, \lambda})$. Indeed, for any $\lambda\neq 0$, by the Spectral Mapping Theorem we have: 
$$\sigma_P(G_{h, \lambda})=\left\{\sum_{i=1}^{n}\frac{g_{h, i}(z)}{\lambda^{n-i}}-\lambda: z\in\mathds{D}\right\}. $$
Now since there is at least some polynomial $p_i$, $i\in\{1, 2, ..., n\}$, such that $p_i$ is nonconstant, there is some $z_0\in\mathds{D}$ such that $g_{h, i}(z_0)\neq 0$. Since $\lambda\neq 0$, we have that 
$$\sum_{i=1}^{n}\frac{g_{h, i}(z_0)}{\lambda^{n-i}}-\lambda=0\quad\text{ if and only if }\quad 
\sum_{i=1}^{n}g_{h, i}(z_0)\lambda^{i}-\lambda^{n+1}=0.$$
Finally, by the Fundamental Theorem of Algebra and the fact that $g_{h, i}(z_0)\neq 0$ it follows that there is some $\lambda_0\neq 0$ such that $\sum_{i=1}^{n}g_{h, i}(z_0)\lambda_0^{i}-\lambda_0^{n+1}=0$ and the proof holds. 
 \end{proof}
\subsection{Forward difference operator}
Let us now focus on the forward difference scheme $\Delta_h^+$ as an operator on $X=\ell^p(\mathds{N}_0)$ of $c_0(\mathds{N}_0)$. We prove that, contrary of what happens with the backward difference operator, in this case, chaos arises as a consequence of \ref{caos matriz A}. 
We first note that, in view that $\Delta_h^+=\frac{B-I}{h}$ and of its spectrum given in \eqref{espectro hacia adelante}, the results of the previous section for the operator $\partial_x$ can be naturally adapted to the present framework. 

Define $G_h: \mathds{C}\times \overline{B(-1/h, 1/h)}\rightarrow \mathds{C}$ as: 
$$G_h(\lambda, z):=f_1(z)+\lambda f_2(z)+\dots+\lambda^{n-1}f_n(z)-\lambda^n,$$
then we have the following result. 
\begin{theorem}\label{caos matriz semigrupo hacia adelante}
    Let $h>0$ and $f_1, f_2, ..., f_n$ be holomorphic functions on $\overline{B(-1/h, 1/h)}$ such that there exists some $\Tilde{z}\in B(-1/h, 1/h)$ with $\textnormal{res}_{\lambda}{\left(G_h, \frac{\partial G_h}{\partial_z}\right)}(\Tilde{z})\neq 0$. 
    Then the following assertions are equivalent: 
    \begin{enumerate}[(i)]
        \item The block operator matrix:
        $$A_h^+=\begin{pmatrix}
0 & I & 0 & \dots \\
0 & 0 & I & \dots \\
\vdots & \vdots & \vdots & \ddots \\
f_1(\Delta_h^+) & f_2(\Delta_h^+) & \dots & f_n(\Delta_h^+) 
\end{pmatrix}$$
generates a sub-chaotic semigroup $(e^{tA_h^+})_{t\geq 0}$ on $X^n$ such that the projection family $(\pi(e^{tA_h^+}): X^n\rightarrow X)_{t\geq 0}$ is chaotic.  \\
\item There are some $\lambda_0\in i\mathds{R}$, $z_0\in B(-1/h, 1/h)$ such that $G_h(\lambda_0, z_0)=0$ and $\textnormal{res}_{\lambda}{\left(G_h, \frac{\partial G_h}{\partial_z}\right)}(z_0)\neq 0$.\\
\item Every autonomous discretization of $(e^{tA_h^+})_{t\geq 0}$ is Devaney sub-chaotic on $X^n$. \\
\item Some autonomous discretization of $(e^{tA_h^+})_{t\geq 0}$ is Devaney sub-chaotic on $X^n$. 
    \end{enumerate}
\end{theorem}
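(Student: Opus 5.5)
The plan is to reduce Theorem \ref{caos matriz semigrupo hacia adelante} to the machinery already developed for $A$ and $A_d$, using that $\Delta_h^+=\frac{B-I}{h}$ is an affine function of the backward shift. Set $\phi(w):=\frac{w-1}{h}$, which maps $\mathds{D}$ biholomorphically onto $B(-1/h,1/h)$ and $\overline{\mathds{D}}$ onto its closure. By the composition rule of the Riesz--Dunford calculus, $f_i(\Delta_h^+)=(f_i\circ\phi)(B)$. Hence $A_h^+$ is exactly the block matrix $A$ of Theorem \ref{espectro de la matriz} with the functions $\tilde f_i:=f_i\circ\phi$, which are holomorphic on a neighbourhood of $\overline{\mathds{D}}$. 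The associated characteristic polynomial is $\tilde G(\lambda,w)=G_h(\lambda,\phi(w))$. Since $\frac{\partial\tilde G}{\partial w}=\frac1h\frac{\partial G_h}{\partial z}(\lambda,\phi(w))$ and the resultant is homogeneous in the coefficients of each entry, we get $\textnormal{res}_\lambda(\tilde G,\tilde G_w)(w)=c_h\,\textnormal{res}_\lambda(G_h,\partial_z G_h)(\phi(w))$ with $c_h\neq0$. Therefore the hypothesis on $\tilde z$ and condition (ii) transfer verbatim between $z\in B(-1/h,1/h)$ and $w=\phi^{-1}(z)\in\mathds{D}$. Theorem \ref{espectro de la matriz} then gives $\sigma_P(A_h^+)=\{\lambda:\exists z\in B(-1/h,1/h),\ G_h(\lambda,z)=0\}$.

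For (ii)$\Rightarrow$(iii), I would follow the proof of Theorem \ref{caos matriz semigrupos}. Since $\partial_z G_h(\lambda_0,z_0)\neq0$, the holomorphic implicit function theorem gives a nonconstant holomorphic $g:U_0\to B(-1/h,1/h)$ near $\lambda_0$ with $G_h(\lambda,g(\lambda))=0$. Set $w(\lambda)=1+hg(\lambda)\in\mathds{D}$ and $e_\lambda=(1,w(\lambda),w(\lambda)^2,\dots)\in X$, so that $Be_\lambda=w(\lambda)e_\lambda$ and hence $f_i(\Delta_h^+)e_\lambda=f_i(g(\lambda))e_\lambda$. The vector $v_\lambda=(e_\lambda,\lambda e_\lambda,\dots,\lambda^{n-1}e_\lambda)$ is then a weakly holomorphic eigenvector field of $A_h^+$; weak holomorphy follows by pairing with $\ell^q$ sequences, exactly as in Theorem \ref{caos matriz A}. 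Fix $t>0$ and shrink $U_0$ so that $\lambda\mapsto e^{t\lambda}$ is univalent there. Composing with a local branch of $\log(\cdot)/t$ produces a weakly holomorphic eigenvector field of $e^{tA_h^+}$ on the open set $e^{tU_0}$, which meets $\mathds{T}$. Theorem \ref{eigenvector-criterion operador} then yields sub-chaos of $e^{tA_h^+}$. The same field, together with Proposition \ref{subcaos semigrupos}, gives sub-chaos of the semigroup on $Y=\overline{\operatorname{span}}\{v_\lambda\}$. For $\overline{\pi(Y)}=X$, I would argue as in the corollary after Theorem \ref{caos matriz A}: if $x^*$ annihilates every $e_\lambda$, then $\sum_j\overline{a_j}w(\lambda)^j=0$ on the open set $w(U_0)$, so $a_j=0$ for all $j$. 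This gives (i), and (iii)$\Rightarrow$(iv) is trivial.

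The closing implication, (i) or (iv)$\Rightarrow$(ii), is where I expect the real work. I would first derive from Theorem \ref{necessary condition chaos} (respectively Theorem \ref{condicion necesaria caos operador}, combined with the spectral mapping theorem $\sigma_P(e^{tA})=e^{t\sigma_P(A)}$) that infinitely many $\lambda\in i\mathds{R}$ belong to $\sigma_P(A_h^+)$, so $G_h(\lambda,z)=0$ with $z$ in the open disc. Transported to $\mathds{D}$ through $\phi$, this is the starting point of Claims 1--3 in the proof of Theorem \ref{caos matriz A}, with $\mathds{T}$ replaced by $i\mathds{R}$. Claim 1 bounds the $\lambda$ for which $G_h(\lambda,\cdot)\equiv0$. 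Claim 2 is the Rouché perturbation, giving an open $U\ni\lambda_1$ such that every $\lambda\in U$ has a zero in a fixed compact subdisc. Claim 3 uses that the resultant has isolated zeros to find a zero $(\lambda_0,z_0)$ with $\lambda_0\in i\mathds{R}$ and nonvanishing resultant.

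The delicate point is the discrete case (iv). An eigenvalue $\mu$ of $e^{tA_h^+}$ on $\mathds{T}$ comes only from some $\lambda\in\sigma_P(A_h^+)$ with $e^{t\lambda}=\mu$. This forces $\operatorname{Re}\lambda=0$, so points of $i\mathds{R}\cap\sigma_P(A_h^+)$ are again produced, and infinitely many of them, because the roots of unity are infinite. I would check carefully that compactness in Claim 3 still works, since $i\mathds{R}$ is unbounded. For this I would restrict to a bounded segment around the accumulation point $\lambda_1$ supplied by Claim 2, where only finitely many exceptional $\lambda$ from Claim 1 need to be avoided.
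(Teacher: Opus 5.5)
Your proposal is correct and is essentially the paper's argument: the paper only says the proof is analogous to Theorem \ref{caos matriz semigrupos}, which in turn rests on Claims 1--3 of Theorem \ref{caos matriz A} with $\mathds{T}$ replaced by $i\mathds{R}$ and on the implicit-function eigenvector field transported through a branch of $\log(\cdot)/t$, while your affine reduction $\Delta_h^+=\phi(B)$ and your proof of (iv)$\Rightarrow$(ii) via $\sigma_P(e^{tA})=e^{t\sigma_P(A)}$ just make explicit what the paper leaves implicit or calls immediate. In Claim 3 the unboundedness of $i\mathds{R}$ is harmless because $U$ (indeed $\sigma(A_h^+)$) is bounded; the finite set of $\lambda$ you must avoid is not the exceptional set of Claim 1 but the $\lambda$ with $G_h(\lambda,\zeta)=0$ for one of the finitely many zeros $\zeta$ of the resultant in $\overline{B(z_1,r)}$.
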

\begin{proof}
    The proof is entirely analogous to that of Theorem \ref{caos matriz semigrupos}.
\end{proof}
\begin{corollary}\label{corolario para polinomios hacia adelante} Let $p_1, p_2, \dots, p_n$ be polynomials such that at least one of them is nonconstant. Then the following assertions are equivalent: 
\begin{enumerate}[(i)]
    \item There is some $h_0>0$ such that the semigroup $(T_t)_{t\geq 0}$ generated by the block matrix operator 
    $$A_h^+:= \begin{pmatrix}
0 & I & 0 & \dots \\
0 & 0 & I & \dots \\
\vdots & \vdots & \vdots & \ddots \\
p_1(\Delta_h^+) & p_2(\Delta_h^+) & \dots & p_n(\Delta_h^+) 
\end{pmatrix}$$
is Deveney sub-chaotic on $X^n$ for all $0<h<h_0$ and the projection family $(T_t: X^n\rightarrow X)_{t\geq 0}$ generated by $A_h^+$ is chaotic. 
\item There are some $\lambda_0\in i\mathds{R}$ and $z_0\in\mathds{C}$ with $\textnormal{Re}(z_0)<0$ such that $G_h(\lambda_0, z_0)=0$ and $\textnormal{disc}_z(G_h)(\lambda_0)\neq 0.$
\end{enumerate}
\end{corollary}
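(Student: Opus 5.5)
The plan is to reduce everything to Theorem \ref{caos matriz semigrupo hacia adelante}, following the structure of the proofs of Corollaries \ref{corolario para polinomios} and \ref{corolario para racionales}. Here $G_h(\lambda,z)=p_1(z)+\lambda p_2(z)+\dots+\lambda^{n-1}p_n(z)-\lambda^n$ is a polynomial in both variables, and it does not actually depend on $h$. The dependence on $h$ enters only through the domain $B(-1/h,1/h)$. These discs are nested and increasing as $h\downarrow 0$, and
$$\bigcup_{h>0}B(-1/h,1/h)=\{z\in\mathds{C}:\textnormal{Re}(z)<0\}.$$
So the corollary is the forward-difference analogue of Corollary \ref{corolario para polinomios}, with the family of discs $\rho\mathds{D}$ replaced by this family of discs exhausting the left half-plane.

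For $(ii)\Rightarrow(i)$, take $\lambda_0\in i\mathds{R}$ and $z_0$ with $\textnormal{Re}(z_0)<0$, $G_h(\lambda_0,z_0)=0$ and $\textnormal{disc}_z(G_h)(\lambda_0)\neq 0$. I will first pass from the discriminant condition to the resultant condition exactly as in the first half of the proof of Corollary \ref{corolario para racionales}.

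\begin{enumerate}[(1)]
\item Since $\textnormal{disc}_z(G_h)\not\equiv 0$, and $a_0(\lambda)=-\lambda^n+\dots$ guarantees that $G_h(\lambda,\cdot)\not\equiv 0$, the exceptional set $F$ built in the proof of Corollary \ref{corolario para polinomios} is finite.
\item Because $\partial_z G_h(\lambda_0,z_0)\neq 0$, the holomorphic implicit function theorem gives a branch $g$ with $g(\lambda_0)=z_0$ on a neighbourhood $U$ of $\lambda_0$.
\item Choose $\hat\lambda\in U\cap i\mathds{R}\setminus F$, shrinking $U$ if needed so that $\textnormal{Re}\,g<0$ on $U$. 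Then $\hat z=g(\hat\lambda)$ satisfies $G_h(\hat\lambda,\hat z)=0$, $\textnormal{Re}(\hat z)<0$ and $\textnormal{res}_\lambda(G_h,\partial_zG_h)(\hat z)\neq 0$.
\item Pick $h_0$ with $\hat z\in B(-1/h_0,1/h_0)$, i.e. $|\hat z+1/h_0|<1/h_0$. This is possible since $|z+r|^2<r^2\iff |z|^2<-2r\textnormal{Re}(z)$, which holds for $r=1/h$ large.
\item For every $0<h<h_0$, monotonicity of the discs gives $\hat z\in B(-1/h,1/h)$. This also supplies the point $\tilde z$ needed in the hypothesis of Theorem \ref{caos matriz semigrupo hacia adelante}, so condition $(ii)$ of that theorem holds and yields both sub-chaos and chaos of the projection family.
\end{enumerate}

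For $(i)\Rightarrow(ii)$, fix one $h<h_0$. Implication $(i)\Rightarrow(ii)$ of Theorem \ref{caos matriz semigrupo hacia adelante} gives $\lambda_1\in i\mathds{R}$ and $z_1\in B(-1/h,1/h)$, hence $\textnormal{Re}(z_1)<0$, with $G_h(\lambda_1,z_1)=0$ and nonvanishing resultant at $z_1$. The resultant condition gives $\partial_zG_h(\lambda_1,z_1)\neq 0$. Applying the implicit function theorem once more, and perturbing $\lambda_1$ along $i\mathds{R}$ to avoid the finitely many zeros of $\textnormal{disc}_z(G_h)$, produces the required $(\lambda_0,z_0)$, as in the second half of the proof of Corollary \ref{corolario para racionales}.

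The main obstacle is the standing hypothesis $\textnormal{disc}_z(G_h)\not\equiv 0$. It is used in both directions but is not stated in the corollary, although Corollary \ref{corolario para polinomios} assumed it. In $(ii)\Rightarrow(i)$ it is implicit, since $\textnormal{disc}_z(G_h)(\lambda_0)\neq 0$ already forces $\textnormal{disc}_z(G_h)\not\equiv0$. In $(i)\Rightarrow(ii)$ it is not automatic. I would first try to derive it from the resultant condition: a nonvanishing resultant at $z_1$ forces $\partial_zG_h(\cdot,z_1)\not\equiv 0$ on the zero set. If that argument fails, I would add the discriminant hypothesis explicitly, as in Corollary \ref{corolario para polinomios}. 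A secondary point to verify carefully is that Theorem \ref{caos matriz semigrupo hacia adelante} requires holomorphy only on $\overline{B(-1/h,1/h)}$, and polynomials trivially satisfy this for every $h$.
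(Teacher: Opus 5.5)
Your proposal follows the paper's own proof. The paper likewise converts the discriminant condition into the resultant condition ``as in Corollary \ref{corolario para racionales}''. It then uses that the discs $B(-1/h,1/h)$ exhaust $\{\textnormal{Re}\,z<0\}$ to invoke Theorem \ref{caos matriz semigrupo hacia adelante}.

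\textbf{Main gap: the direction $(i)\Rightarrow(ii)$.} The obstacle you flag at the end is real, but it sits one step earlier than you place it, and no derivation removes it. In $(i)\Rightarrow(ii)$ you use implication $(i)\Rightarrow(ii)$ of Theorem \ref{caos matriz semigrupo hacia adelante}. That implication needs the theorem's standing hypothesis that $\textnormal{res}_\lambda(G_h,\partial_zG_h)(\tilde z)\neq0$ for some $\tilde z\in B(-1/h,1/h)$. Nothing in (i) provides this, and without it the implication is false.

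Take $n=2$, $p_1(z)=-z^4$, $p_2(z)=2z^2$, i.e.\ $(\partial_t-\partial_x^2)^2u=0$. Then $G_h=-(\lambda-z^2)^2$, so $\textnormal{disc}_z(G_h)\equiv0$ and (ii) fails. The resultant is also identically zero, with common root $\lambda=z^2$. Yet (i) holds for every $h>0$. The point $z_0=re^{3\pi i/4}$ with $0<r<\sqrt2/h$ lies in $B(-1/h,1/h)$, and $\lambda_0=z_0^2=-ir^2\in i\mathds{R}$. A branch $g$ of $\sqrt\lambda$ near $\lambda_0$ with $g(\lambda_0)=z_0$ and values in $B(-1/h,1/h)$ gives the eigenvector field $\lambda\mapsto(e_{1+hg(\lambda)},\lambda e_{1+hg(\lambda)})$, where $e_w=(w^k)_{k\geq0}$. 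Proposition \ref{subcaos semigrupos} then yields sub-chaos with dense first projection.

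So a hypothesis must be added. The natural one is that $\textnormal{res}_\lambda(G_h,\partial_zG_h)$ is not identically zero.
\begin{itemize}
\item Being a polynomial in $z$, it is then nonzero somewhere in every disc, which licenses the theorem for all $h$.
\item As you suspected, it implies that $\textnormal{disc}_z(G_h)$ is not identically zero. A factor $H^2\mid G_h$ with $\deg_zH\geq1$ has positive $\lambda$-degree and constant leading $\lambda$-coefficient, because the $\lambda^n$-coefficient of $G_h$ is $-1$. Hence $H(\cdot,z)$ has a root for every $z$, and that root is common to $G_h(\cdot,z)$ and $\partial_zG_h(\cdot,z)$.
\end{itemize}
The paper's proof has the same omission.

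\textbf{Second gap: step (1) of $(ii)\Rightarrow(i)$.} This flaw is inherited from the proof of Corollary \ref{corolario para polinomios}. That $a_0$ is a nonzero polynomial does not make $G_h(\lambda,\cdot)$ nonzero for every $\lambda$. Suppose $\lambda-c$ divides $G_h$, e.g.\ $p_1\equiv0$, $p_2(z)=z^2$ (the equation $u_{tt}-u_{txx}=0$). Then $G_h=\lambda(z^2-\lambda)$ and $\textnormal{disc}_z(G_h)=4\lambda^3$, but $F=\mathds{C}$. The resultant also vanishes identically, with common root $\lambda=0$. So steps (3) and (5) fail and the theorem cannot be invoked, even though (ii) holds with $\lambda_0=i$, $z_0=-e^{i\pi/4}$.

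Statement (i) is still true in this example, and the repair is to bypass the resultant in $(ii)\Rightarrow(i)$. When $\deg_zG_h\geq2$, the conditions $\textnormal{disc}_z(G_h)(\lambda_0)\neq0$ and $G_h(\lambda_0,z_0)=0$ already give $\partial_zG_h(\lambda_0,z_0)\neq0$. The implicit-function eigenvector field of your steps (2)--(5), together with Proposition \ref{subcaos semigrupos}, then gives (i) directly. In Theorems \ref{caos matriz A} and \ref{caos matriz semigrupos} the resultant is only used to produce $\partial_zG(\lambda_0,z_0)\neq0$ in the direction $(ii)\Rightarrow(i)$.
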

\begin{proof}
    In a similar way as in the proof of Corollary \ref{corolario para racionales}, we can show that there are some $\lambda_0\in i\mathds{R}$ and $z_0$ with $\textnormal{Re}(z_0)<0$ such that $G_h(\lambda_0, z_0)=0$ and $\textnormal{disc}_z(G_h)(\lambda_0)\neq 0$ if and only if there are some some $\hat{\lambda}\in i\mathds{R}$ and $\hat{z}$ with $\textnormal{Re}(\hat{z})<0$ such that $G_h(\hat{\lambda}, \hat{z})=0$ and $\textnormal{res}_{\lambda}{\left(G_h, \frac{\partial G_h}{\partial z}\right)}(\hat{z})\neq 0$. 

    Finally, we notice that given $z_0$ with $\textnormal{Re}(z_0)<0$, there is some $h_0$ such that $z_0\in B(-1/h, 1/h)$ for all $0<h<h_0$, and the conclusion holds by Theorem \ref{caos matriz semigrupo hacia adelante}.
\end{proof}
\begin{corollary}\label{corolario para racionales hacia adelante}
    Let $q, p_1, p_2, \dots, p_n$ be polynomials of degree at most $m$. Moreover, let $h>0$ be such that 
    $$\{z\in\mathds{C}: q(z)=0\}\subset\mathds{C}\setminus\overline{B(-1/h, 1/h)}.$$
 Define the function $P_h:\mathds{C}\times \overline{B(-1/h, 1/h)}\rightarrow\mathds C$ as $$P_h(\lambda, z):=p_1(z)+\dots+\lambda^{n-1}p_n(z)-\lambda^{n}q(z)=a_m(\lambda)z^m+\dots+a_1(\lambda)z+a_0(\lambda),$$
where $a_0, a_1, ..., a_m$ are polynomial of degree at most $n$ such that $\text{disc}_z(P_h)(\lambda)\nequiv 0$. Then the block matrix operator 
    $$A_h^+:= \begin{pmatrix}
0 & I & 0 & \dots \\
0 & 0 & I & \dots \\
\vdots & \vdots & \vdots & \ddots \\
(p_1/q)(\Delta_h^+) & (p_2/q)(\Delta_h^+) & \dots & (p_n/q)(\Delta_h^+) 
\end{pmatrix}$$
is a bounded operator on $X^n$ that generates a Deveney sub-chaotic semigroup if and only if there are some $\lambda_0\in i\mathds{R}$ and $z_0\in B(-1/h, 1/h)$ such that $P_h(\lambda_0, z_0)=0$ and $\textnormal{disc}_z(P_h)(\lambda_0)\neq 0$. 
\end{corollary}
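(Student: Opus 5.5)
The plan is to follow the proof of Corollary \ref{corolario para racionales} almost verbatim, with $\partial_x$ on $X_\rho$ replaced by $\Delta_h^+$ on $X$ and the disc $\rho\mathds{D}$ replaced by $B(-1/h,1/h)$.

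First I will check that $A_h^+$ is a bounded operator. The zero set of $q$ avoids $\overline{B(-1/h,1/h)}=\sigma(\Delta_h^+)$, so $1/q$ is holomorphic on a neighborhood of $\sigma(\Delta_h^+)$. The Riesz--Dunford calculus then gives $(p_i/q)(\Delta_h^+)=q(\Delta_h^+)^{-1}p_i(\Delta_h^+)\in\mathcal{B}(X)$, and $A_h^+$ is a bounded block operator generating the uniformly continuous semigroup $(e^{tA_h^+})_{t\geq0}$.

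Next I will reduce the statement to Theorem \ref{caos matriz semigrupo hacia adelante} with $f_i=p_i/q$. In that case
\[
G_h(\lambda,z)=\frac{P_h(\lambda,z)}{q(z)}, \qquad q(z)\neq0 \text{ on } \overline{B(-1/h,1/h)}.
\]
Hence $G_h(\lambda,z)=0$ if and only if $P_h(\lambda,z)=0$. On the zero set we also have $\partial_zG_h=\partial_zP_h/q$. Therefore $\textnormal{res}_\lambda(G_h,\partial_zG_h)(z)$ vanishes exactly when $\textnormal{res}_\lambda(P_h,\partial_zP_h)(z)$ does, since both express the existence of a common $\lambda$-root. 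The nondegeneracy hypothesis of Theorem \ref{caos matriz semigrupo hacia adelante} is the existence of some $\tilde z$ in the disc with nonzero resultant. I expect to derive it from $\textnormal{disc}_z(P_h)\nequiv0$: outside the finitely many bad $\lambda$'s, a simple root $z(\lambda)$ moves holomorphically and the exceptional set is finite. If this becomes delicate, I will note that it is needed only for the implication (i)$\Rightarrow$(ii), and handle that direction separately.

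The core step is the equivalence between two conditions: (a) there exist $\lambda_0\in i\mathds{R}$ and $z_0\in B(-1/h,1/h)$ with $P_h(\lambda_0,z_0)=0$ and $\textnormal{disc}_z(P_h)(\lambda_0)\neq0$; and (b) there exist $\hat\lambda\in i\mathds{R}$ and $\hat z\in B(-1/h,1/h)$ with $P_h(\hat\lambda,\hat z)=0$ and $\textnormal{res}_\lambda(P_h,\partial_zP_h)(\hat z)\neq0$. This is exactly the argument in the proof of Corollary \ref{corolario para racionales}.

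For (a)$\Rightarrow$(b): the nonvanishing discriminant gives $\partial_zP_h(\lambda_0,z_0)\neq0$. The holomorphic implicit function theorem then yields $g:U\to B(-1/h,1/h)$ with $P_h(\lambda,g(\lambda))=0$, where the ball is open and $U$ is shrunk if necessary. Let $\lambda_1,\dots,\lambda_N$ be the zeros of $\textnormal{disc}_z(P_h)$, and let $F$ be the finite set of $\lambda$ with $P_h(\lambda,z)=0$ for some root $z$ of some $P_h(\lambda_i,\cdot)$. Choose $\hat\lambda\in U\cap i\mathds{R}\setminus F$, which is possible because $U\cap i\mathds{R}$ is infinite, and set $\hat z=g(\hat\lambda)$. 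Then every $\lambda$-root of $P_h(\cdot,\hat z)$ lies off the discriminant locus, so $\partial_zP_h\neq0$ there and the resultant is nonzero.

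For (b)$\Rightarrow$(a): the nonzero resultant gives $\partial_zP_h(\hat\lambda,\hat z)\neq0$. The implicit function theorem again produces a local branch $\hat g$, and I will pick $\lambda_0\in \hat U\cap i\mathds{R}$ avoiding the finitely many zeros of $\textnormal{disc}_z(P_h)$, with $z_0=\hat g(\lambda_0)$.

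Combining this equivalence with Theorem \ref{caos matriz semigrupo hacia adelante} (i)$\Leftrightarrow$(ii) finishes the proof. The main obstacle is the bookkeeping for the finite exceptional set $F$. One point to check is that $P_h(\cdot,z)$ is not identically zero in $\lambda$; this holds wherever $q(z)\neq0$, because the leading coefficient of $P_h(\cdot,z)$ is $-q(z)$. That keeps $F$ finite. The other point is justifying the nondegeneracy hypothesis of Theorem \ref{caos matriz semigrupo hacia adelante}, which I will handle as described above.
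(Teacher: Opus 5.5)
Your route is the paper's. The corollary is obtained by transporting the proof of Corollary~\ref{corolario para racionales} through Theorem~\ref{caos matriz semigrupo hacia adelante}, and your preliminary steps are fine: boundedness of $A_h^+$, $G_h=P_h/q$, and the two resultants vanishing at the same $z$.

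However, the point you flagged is a genuine gap, and your proposed fix does not work. Your finiteness check for $F$ only uses $P_h(\cdot,z)\not\equiv0$. You also need $P_h(\lambda_i,\cdot)\not\equiv0$ for each zero $\lambda_i$ of the discriminant, and you should only take roots $z$ in the disc, where $q\neq0$. Suppose $a_0,\dots,a_m$ have a common zero $c$, i.e.\ $(\lambda-c)$ divides $P_h$. Then $\lambda=c$ is a common root of $P_h(\cdot,z)$ and $\partial_zP_h(\cdot,z)$ for every $z$, so $\textnormal{res}_\lambda(P_h,\partial_zP_h)\equiv0$. Consequently the nondegeneracy hypothesis of Theorem~\ref{caos matriz semigrupo hacia adelante} fails, your condition (b) is never satisfied, and $F=\mathds{C}$. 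Meanwhile $\textnormal{disc}_z(P_h)=(\lambda-c)^{2m-2}\,\textnormal{disc}_z\bigl(P_h/(\lambda-c)\bigr)$ can still be $\not\equiv0$.

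For instance, take $n=2$, $q=1$, $p_1=0$, $p_2(z)=z^2$. Then $P_h=\lambda(z^2-\lambda)$ and $\textnormal{disc}_z(P_h)=4\lambda^3$. Condition (a) holds with $\lambda_0=is$ and $z_0=-\sqrt{s/2}\,(1+i)$ for small $s>0$, and $A_h^+$ is indeed sub-chaotic. Yet (b) fails for every $\hat z$. So (a)$\Leftrightarrow$(b) is false in general, and the damage is not confined to (i)$\Rightarrow$(ii) as your fallback suggests: your ($\Leftarrow$) also passes through (b). The paper's proof of Corollary~\ref{corolario para racionales} has the same omission.

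The remedy is to drop resultants entirely.

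For ($\Rightarrow$): sub-chaos makes $\sigma_P(A_h^+)\cap i\mathds{R}$ infinite by Theorem~\ref{necessary condition chaos}. By Theorem~\ref{espectro de la matriz}(i), applied to $f_i\circ\bigl(w\mapsto (w-1)/h\bigr)$, each such $\lambda$ admits some $z\in B(-1/h,1/h)$ with $P_h(\lambda,z)=0$. Since $\textnormal{disc}_z(P_h)$ has only finitely many zeros, one of these $\lambda$ works.

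For ($\Leftarrow$): $z_0$ is a simple root of $P_h(\lambda_0,\cdot)$, so the holomorphic implicit function theorem gives a branch $g$ near $\lambda_0$ with values in $B(-1/h,1/h)$. This $g$ is nonconstant because $P_h(\cdot,z_0)$ has leading coefficient $-q(z_0)\neq0$. Then $\lambda\mapsto(e_{g(\lambda)},\lambda e_{g(\lambda)},\dots,\lambda^{n-1}e_{g(\lambda)})$, with $e_w=((1+hw)^k)_{k\geq0}$, is a weakly holomorphic eigenvector field of $A_h^+$ near $\lambda_0\in i\mathds{R}$. Proposition~\ref{subcaos semigrupos} concludes, exactly as in (ii)$\Rightarrow$(i) of Theorem~\ref{caos matriz semigrupos}, which never uses the resultant hypothesis.

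One caveat remains. The step ``$\textnormal{disc}_z(P_h)(\lambda_0)\neq0$ implies $z_0$ is simple'' needs $P_h(\lambda_0,\cdot)\not\equiv0$. This is automatic when $\deg_zP_h\geq2$, but not when $P_h$ is linear in $z$. With $p_2(z)=z$ one gets $P_h=\lambda(z-\lambda)$ and $\textnormal{disc}_z(P_h)\equiv1$, since a linear polynomial has discriminant $1$. Condition (a) then holds at $\lambda_0=0$, but $\sigma_P(A_h^+)\cap i\mathds{R}=\{0\}$. So in that degenerate case the statement itself needs an extra hypothesis.
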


Now as in the backward difference scheme, we take the polynomial functions $p_1, ..., p_n, q$ and assume that $q$ is such that 
$$\{z\in \mathds{C}: q(z)=0\}\subset \mathds{C}\setminus \overline{B(-1/h, 1/h)}.$$
Then we write systems \eqref{sistema discreto ecuaciones caso polinomico} and \eqref{sistema discreto ecuaciones caso racional} as
\begin{equation}\label{Cauchy orden n disretizacion hacia adelante}
     \frac{d}{d t}\begin{pmatrix}
         u_1\\
         u_2\\
         \vdots\\
         u_n
     \end{pmatrix}=\begin{pmatrix}
0 & I & 0 & \dots \\
0 & 0 & I & \dots \\
\vdots & \vdots & \vdots & \ddots \\
(p_1/q)(\Delta_h^+) & (p_2/q)(\Delta_h^+) & \dots & (p_n/q)(\Delta_h^+) 
\end{pmatrix}\begin{pmatrix}
    u_1\\
    u_2\\
    \vdots\\
    u_n
\end{pmatrix},\quad \begin{pmatrix}
    u_1(0)\\
    u_2(0)\\
    \vdots\\
    u_n(0)
\end{pmatrix}\in X^n.\\
 \end{equation}
As a consequence of Theorem \ref{caos matriz semigrupo hacia adelante} and Corollaries \ref{corolario para polinomios hacia adelante} and \ref{corolario para racionales hacia adelante} we have the following results. 
\begin{corollary} Let a PDE be such that its highest-order time derivative is of order $n\in\mathds{N}$ and appears without mixed spatial derivatives as in \eqref{caso ecuacion polinomial}
    . Then the following assertions are equivalent: 
    \begin{enumerate}[(i)]
        \item There is some $h_0>0$ such that the solution of the forward semidiscretization in space of the equation of step $h>0$ is Devaney chaotic on $X=\ell^p(\mathds{N}_0)$ or $c_0(\mathds{N}_0)$ for all $0<h<h_0$. 
        \item There are some $\lambda_0\in i\mathds{R}$ and $z_0$ such that $\textnormal{Re}(z_0)<0$, $G_h(\lambda_0, z_0)=0$ and $\textnormal{disc}_{G_h}(\lambda_0)\neq 0$.
    \end{enumerate}
\end{corollary}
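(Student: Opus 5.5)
This corollary specializes Corollary \ref{corolario para polinomios hacia adelante} to the PDE \eqref{caso ecuacion polinomial}. First, rewrite the forward semidiscretization \eqref{sistema discreto ecuaciones caso polinomico} as the abstract Cauchy problem \eqref{Cauchy orden n disretizacion hacia adelante} with $q\equiv 1$ and $p_i(z)=-\sum_{j=0}^m a_{i-1,j}z^j$. Since $q\equiv1$ has no zeros, no restriction on $h$ is needed for $A_h^+$ to be a bounded block operator matrix on $X^n$. The solution of the semidiscrete equation is then the first coordinate of $e^{tA_h^+}(u_1(0),\dots,u_n(0))$. Accordingly, "the solution is Devaney chaotic" is read, as throughout the paper, as chaos of the projection family $(\pi(e^{tA_h^+}))_{t\ge0}$ in the sense of Definition \ref{projfam}. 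The characteristic polynomial is $G_h(\lambda,z)=p_1(z)+\lambda p_2(z)+\dots+\lambda^{n-1}p_n(z)-\lambda^n$. This is a polynomial in both variables that does not depend on $h$; $h$ enters only through the domain $\overline{B(-1/h,1/h)}$.

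\emph{Implication (ii)$\Rightarrow$(i).} Take $\lambda_0\in i\mathds{R}$ and $z_0$ with $\textnormal{Re}(z_0)<0$, $G_h(\lambda_0,z_0)=0$ and $\textnormal{disc}_z(G_h)(\lambda_0)\neq0$. Then $\frac{\partial G_h}{\partial z}(\lambda_0,z_0)\ne0$. Following the proof of Corollary \ref{corolario para racionales}, apply the holomorphic implicit function theorem and avoid the finite exceptional set $F$. This yields $\hat\lambda\in i\mathds{R}$ and $\hat z$ near $z_0$, still with $\textnormal{Re}(\hat z)<0$, such that $G_h(\hat\lambda,\hat z)=0$ and $\textnormal{res}_\lambda(G_h,\partial_zG_h)(\hat z)\ne0$. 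The resultant is therefore not identically zero, so the standing hypothesis of Theorem \ref{caos matriz semigrupo hacia adelante} holds. Because $\textnormal{Re}(\hat z)<0$, the condition $|\hat z+1/h|<1/h$ is equivalent to $|\hat z|^2<-2\textnormal{Re}(\hat z)/h$. This holds for all $0<h<h_0:=-2\textnormal{Re}(\hat z)/|\hat z|^2$. Theorem \ref{caos matriz semigrupo hacia adelante}, (ii)$\Rightarrow$(i), then gives chaos of the projection family for every such $h$, exactly as in Corollary \ref{corolario para polinomios hacia adelante}.

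\emph{Implication (i)$\Rightarrow$(ii).} Fix one $h\in(0,h_0)$. By Theorem \ref{caos matriz semigrupo hacia adelante}, (i)$\Rightarrow$(ii), there are $\lambda_0\in i\mathds{R}$ and $z_0\in B(-1/h,1/h)$ with $G_h(\lambda_0,z_0)=0$ and nonvanishing resultant at $z_0$. Every point of that open disc satisfies $\textnormal{Re}(z_0)<0$. The resultant condition gives $\partial_zG_h(\lambda_0,z_0)\ne0$. Perturbing $\lambda_0$ along $i\mathds{R}$ through the implicit function $g$ moves $z=g(\lambda)$ only slightly, so $\textnormal{Re}\,z<0$ persists. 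Since the discriminant has finitely many zeros, this perturbation reaches a $\lambda$ where $\textnormal{disc}_z(G_h)(\lambda)\ne0$, which is (ii).

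\emph{Main obstacle.} Theorem \ref{caos matriz semigrupo hacia adelante} has a standing hypothesis that the resultant is not identically zero, and the backward direction needs $\textnormal{disc}_z(G_h)\not\equiv0$. In (ii)$\Rightarrow$(i) both are automatic, as shown above. In (i)$\Rightarrow$(ii) we must make sure the assumed chaos already forces them. If the discriminant vanished identically, $G_h(\lambda,\cdot)$ would have only multiple roots for every $\lambda$. I would then handle the squarefree part of $G_h$ (as a polynomial in $z$) separately, or simply restrict to the nondegenerate situation, as Corollary \ref{corolario para polinomios hacia adelante} implicitly does.
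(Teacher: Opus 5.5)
Your route is the paper's own. The paper obtains this corollary in one line from Corollary~\ref{corolario para polinomios hacia adelante}, whose proof is exactly what you reproduce: the disc/resultant transfer, the choice of $h_0$ so that $z_0\in B(-1/h,1/h)$ (your $h_0=-2\textnormal{Re}(\hat z)/|\hat z|^2$ is correct), and Theorem~\ref{caos matriz semigrupo hacia adelante}.

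The real problem is the one you flag in (i)$\Rightarrow$(ii), and it cannot be closed. Invoking Theorem~\ref{caos matriz semigrupo hacia adelante} needs $\textnormal{res}_\lambda(G_h,\partial_zG_h)\not\equiv0$, and reaching (ii) needs $\textnormal{disc}_z(G_h)\not\equiv0$. Neither follows from chaos, and the implication is actually false without such a hypothesis. Take $\partial_t^2u-2\partial_t\partial_x^2u+\partial_x^4u=0$, so $p_1(z)=-z^4$, $p_2(z)=2z^2$ and $G_h(\lambda,z)=-(\lambda-z^2)^2$. Every root in $z$ is multiple, so $\textnormal{disc}_z(G_h)\equiv0$ and (ii) fails. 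Now fix any $h>0$ and pick $0<a<1/h$; then $-a+ia\in B(-1/h,1/h)$ and $(-a+ia)^2=-2a^2i$. Let $g$ be the branch of $\sqrt{\lambda}$ near $\lambda_0=-2a^2i$ with $g(\lambda_0)=-a+ia$, and let $e_w=((1+hw)^k)_{k\ge0}$, so that $\Delta_h^+e_w=we_w$. Then $V(\lambda)=(e_{g(\lambda)},\lambda e_{g(\lambda)})$ satisfies $A_h^+V(\lambda)=(\lambda e_{g(\lambda)},(-g(\lambda)^4+2\lambda g(\lambda)^2)e_{g(\lambda)})=(\lambda e_{g(\lambda)},\lambda^2e_{g(\lambda)})=\lambda V(\lambda)$. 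It is weakly holomorphic near $\lambda_0\in i\mathds{R}$, and its first coordinates span a dense subspace of $X$. Proposition~\ref{subcaos semigrupos} therefore makes the projection family chaotic for every $h>0$, so (i) holds. Passing to the squarefree part cannot rescue this, because (ii) is a statement about the discriminant of $G_h$ itself. The only fix is to add $\textnormal{disc}_z(G_h)\not\equiv0$ as a hypothesis, as the paper does in Corollaries~\ref{corolario para polinomios} and~\ref{corolario para racionales hacia adelante} but omits here and in Corollary~\ref{corolario para polinomios hacia adelante}.

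Once that hypothesis is added, (i)$\Rightarrow$(ii) needs neither the resultant nor the perturbation along $g$. By Theorem~\ref{necessary condition chaos}, $\sigma_P(A_h^+)\cap i\mathds{R}$ is infinite. By Theorem~\ref{espectro de la matriz} transported to $\Delta_h^+$, each such $\lambda$ has a root $z$ of $G_h(\lambda,\cdot)$ in $B(-1/h,1/h)$, a disc contained in $\{\textnormal{Re}\,z<0\}$. Only finitely many of these $\lambda$ are zeros of the discriminant, so one of the others gives (ii).

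For (ii)$\Rightarrow$(i) it is also safer to skip the resultant. The condition $\textnormal{disc}_z(G_h)(\lambda_0)\neq0$ already gives $\partial_zG_h(\lambda_0,z_0)\neq0$, and that is all the implicit-function and eigenvector-field construction uses. Your detour through $\textnormal{res}_\lambda(G_h,\partial_zG_h)(\hat z)\neq0$ relies on the exceptional set $F$ being finite. That fails when $G_h$ has a factor depending on $\lambda$ alone. For example, $(\partial_t-1)(\partial_t-\partial_x^2)u=0$ gives $G_h=-(\lambda-1)(\lambda-z^2)$, with $\textnormal{disc}_z(G_h)=4\lambda(\lambda-1)^2\not\equiv0$ but $\textnormal{res}_\lambda(G_h,\partial_zG_h)\equiv0$. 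In that example both (i) and (ii) still hold, so only the resultant route breaks, not the implication.
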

\begin{corollary}
    Let a PDE be as in the form of equation \eqref{caso ecuación racional} and let $h>0$ be such that 
    $$\{z\in\mathds{C}: q(z)=0\}\subset\mathds{C}\setminus \overline{B(-1/h, 1/h)}.$$
 Define the function $P_h:\mathds{C}\times \overline{B(-1/h, 1/h)}\rightarrow\mathds C$ as $$P_h(\lambda, z):=p_1(z)+\dots+\lambda^{n-1}p_n(z)-\lambda^{n}q(z)=a_m(\lambda)z^m+\dots+a_1(\lambda)z+a_0(\lambda),$$
where $a_0, a_1, ..., a_m$ are polynomial of degree at most $n$. Then the projection family associated to the solution semigroup of equation \eqref{Cauchy orden n disretizacion hacia adelante} is chaotic if and only if there are some $\lambda_0\in i\mathds{R}$ and $z_0\in B(-1/h, 1/h)$ such that $P_h(\lambda_0, z_0)=0$ and $\text{disc}_z(P_h)(\lambda_0)\neq 0$.  
\end{corollary}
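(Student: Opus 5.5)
The corollary follows by applying Corollary \ref{corolario para racionales hacia adelante} (the forward-scheme analogue of Corollary \ref{corolario para racionales}) to the coefficient matrix of \eqref{Cauchy orden n disretizacion hacia adelante}, with $f_i=p_i/q$. The hypothesis on the zeros of $q$ makes each $(p_i/q)(\Delta_h^+)$ a bounded operator defined by Riesz--Dunford calculus, since $\sigma(\Delta_h^+)=\overline{B(-1/h,1/h)}$ by \eqref{espectro hacia adelante}. Hence $A_h^+$ is bounded and generates the uniformly continuous solution semigroup $(e^{tA_h^+})_{t\geq0}$. Because $q$ does not vanish on $\overline{B(-1/h,1/h)}$, the characteristic function satisfies $G_h=P_h/q$ there. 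Therefore $G_h$ and $P_h$ have the same zeros in the disc, and the resultant conditions for the two are equivalent, exactly as in the proof of Corollary \ref{corolario para racionales}.

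The key point is that the corollary asserts chaos of the projection family, which is stronger than sub-chaos of the semigroup. I will therefore invoke Theorem \ref{caos matriz semigrupo hacia adelante}, whose item (i) includes chaos of $(\pi(e^{tA_h^+}))_{t\ge0}$, rather than stopping at the statement of Corollary \ref{corolario para racionales hacia adelante}. The two implications go as follows.

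For the ``if'' direction, given $\lambda_0\in i\mathds{R}$ and $z_0\in B(-1/h,1/h)$ with $P_h(\lambda_0,z_0)=0$ and $\textnormal{disc}_z(P_h)(\lambda_0)\ne0$, I repeat the implicit-function argument of Corollary \ref{corolario para racionales}. This produces a holomorphic branch $z=g(\lambda)$ of zeros of $P_h$ near $\lambda_0$ with values in $B(-1/h,1/h)$. Since the exceptional set $F_P$ of that proof is finite, I can perturb $\lambda_0$ along $i\mathds{R}$ to a point $\hat\lambda$ with $\textnormal{res}_\lambda(P_h,\partial_zP_h)(g(\hat\lambda))\ne0$. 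The same non-vanishing guarantees the standing hypothesis of Theorem \ref{caos matriz semigrupo hacia adelante}, so its condition (ii) holds, and (ii)$\Rightarrow$(i) gives chaos of the projection family. Concretely, the eigenvectors are $v_\lambda=(e_\lambda,\lambda e_\lambda,\dots,\lambda^{n-1}e_\lambda)$ with $e_\lambda=(1,w(\lambda),w(\lambda)^2,\dots)$, where $w(\lambda)=1+h\,g(\lambda)\in\mathds{D}$ because $B=I+h\Delta_h^+$. Density of $\pi(Y)$ follows as in the corollary following Theorem \ref{caos matriz A}: $w$ is nonconstant, so its image is open, and the Identity Theorem kills any annihilating functional.

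For the ``only if'' direction, chaos of the projection family implies sub-chaos of $e^{tA_h^+}$. By Theorem \ref{necessary condition chaos}, $\sigma_P(A_h^+)\cap i\mathds{R}$ is then infinite. The forward analogue of Corollary \ref{espectro matriz con derivadas parciales} shows that every such eigenvalue $\lambda$ has some $z\in B(-1/h,1/h)$ with $P_h(\lambda,z)=0$. Since $\textnormal{disc}_z(P_h)\not\equiv0$ is a polynomial, it has finitely many zeros, so some purely imaginary eigenvalue $\lambda_0$ has $\textnormal{disc}_z(P_h)(\lambda_0)\ne0$, which is the required condition. One degenerate case needs care: $P_h(\lambda,\cdot)\equiv0$ can occur for only finitely many $\lambda$, because $a_0$ or the leading coefficient is a nonzero polynomial.

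The main obstacle I expect is in the ``if'' direction: ensuring the non-degeneracy hypothesis of Theorem \ref{caos matriz semigrupo hacia adelante} (some $\tilde z$ where the resultant is nonzero) and that the perturbed point $g(\hat\lambda)$ still lies in the open disc. The first is handled by the perturbation just described. The second follows by shrinking the neighbourhood from the implicit function theorem, using continuity of $g$.
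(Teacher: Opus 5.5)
Your route is essentially the paper's. There, the statement is read off from Theorem \ref{caos matriz semigrupo hacia adelante} together with the forward analogue of the disc/res reduction in Corollary \ref{corolario para racionales}. You are right that one must aim at item (i) of that theorem, since Corollary \ref{corolario para racionales hacia adelante} only asserts sub-chaos. Your ``only if'' direction is shorter than the paper's chain (theorem (i)$\Rightarrow$(ii), then res$\to$disc). Theorem \ref{necessary condition chaos} gives infinitely many eigenvalues on $i\mathds{R}$, and removing the finitely many zeros of $\textnormal{disc}_z(P_h)$ directly produces $(\lambda_0,z_0)$. This needs no Rouch\'e argument and no resultant hypothesis.

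Two points need repair.

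\emph{The missing hypothesis.} You use $\textnormal{disc}_z(P_h)\not\equiv 0$, which is absent from the statement as printed. It is inherited from Corollary \ref{corolario para racionales hacia adelante}, and it is genuinely necessary. For $(\partial_t-\partial_x-\frac1h)^2u=0$ one gets $P_h=-(\lambda-z-\frac1h)^2$, so the right-hand side never holds. Yet $e_\lambda=(1,h\lambda,(h\lambda)^2,\dots)$ satisfies $\Delta_h^+e_\lambda=(\lambda-\frac1h)e_\lambda$, and $\lambda\mapsto(e_\lambda,\lambda e_\lambda)$ for $|\lambda|<\frac1h$ is a holomorphic eigenvector field meeting $i\mathds{R}$. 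Hence the projection family is chaotic. You should state this hypothesis explicitly.

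\emph{The perturbation step.} You claim that a perturbation along $i\mathds{R}$ reaches a point with $\textnormal{res}_\lambda(P_h,\partial_zP_h)(g(\hat\lambda))\neq0$. This fails whenever $P_h(\mu,\cdot)\equiv0$ for some $\mu$. An example is $p_1\equiv0$, as in $\partial_t^2u-\partial_t\partial_x^2u=0$, where $P_h=\lambda(z^2-\lambda)$. Then $\mu$ is a common root of $P_h(\cdot,z)$ and $\partial_zP_h(\cdot,z)$ for every $z$. So the resultant vanishes identically, your set $F_P$ is not finite, and the standing hypothesis of Theorem \ref{caos matriz semigrupo hacia adelante} is never met. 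Yet the disc condition does hold here, at $\lambda_0=i\omega$ with $0<\omega<2/h^2$.

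The cure is already in your text:
\begin{enumerate}[(a)]
\item $\textnormal{disc}_z(P_h)(\lambda_0)\neq0$ gives $\partial_zP_h(\lambda_0,z_0)\neq0$;
\item the implicit function theorem then gives $g$;
\item apply Proposition \ref{subcaos semigrupos} directly to your field $v_\lambda$, with $w=1+hg$;
\item your density argument then finishes.
\end{enumerate}
This proves the ``if'' direction with no resultant at all, and it also patches the same weak step in the paper's reduction.
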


Finally, we observe a mutual isometric relationship between continuous and semidiscretized problems.

\begin{theorem}\label{isometrias}
Given an abstract Cauchy problem as in \eqref{Cauchy orden n caso polinomial} or \eqref{Cauchy orden n caso racional} on a Herzog space with weight $\rho>0$, there exists a PDE of the form \eqref{caso ecuacion polinomial} or \eqref{caso ecuación racional} whose forward semidiscretization with step size $h=1/\rho$, as in \eqref{Cauchy orden n disretizacion hacia adelante}, is isometric to it.

Conversely, given a forward semidiscretization of an abstract Cauchy problem with spatial step size $h>0$, there exists an equivalent continuous Cauchy problem in terms of spatial derivatives on a Herzog space with weight $\rho=1/h$.
\end{theorem}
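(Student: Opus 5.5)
The plan is to build one explicit isometric isomorphism $J:X_\rho\to c_0(\mathds{N}_0)$, namely the coefficient map $J\big(\sum_k a_k\rho^k x^k/k!\big)=(a_k)_k$. Under $J$ the derivative $\partial_x$ becomes $\rho B$, as already used in Section 3. The key observation is that $\rho B=\frac{B-I}{h}+\frac1h I=\Delta_h^++\rho I$ when $h=1/\rho$. Thus $J\partial_x J^{-1}=\Delta_h^++\rho I$, or equivalently $\Delta_h^+=J(\partial_x-\rho I)J^{-1}$. Everything then reduces to transporting polynomial and rational functions of these operators through this affine change of variable $z\mapsto z\pm\rho$. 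Note that the affine map sends $\overline{\rho\mathds{D}}$ onto $\overline{B(-1/h,1/h)}$, so the spectral domains of Section 4 and Subsection 5.2 correspond exactly.

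First direction. I start from \eqref{Cauchy orden n caso polinomial} or \eqref{Cauchy orden n caso racional} with polynomials $p_i,q$, where $q$ has no zeros on $\rho\overline{\mathds{D}}$. I define shifted polynomials $\tilde p_i(w):=p_i(w+\rho)$ and $\tilde q(w):=q(w+\rho)$. These have the same degrees and $\tilde q(0)=q(\rho)$. The zeros of $\tilde q$ lie outside $\overline{B(-\rho,\rho)}=\overline{B(-1/h,1/h)}$, precisely because the zeros of $q$ lie outside $\rho\overline{\mathds{D}}$. By the functional calculus (in its polynomial/rational form, or via the composition rule $f(g(T))=(f\circ g)(T)$ for affine $g$), we get
\[
J\,(p_i/q)(\partial_x)\,J^{-1}=(p_i/q)(\Delta_h^++\rho I)=(\tilde p_i/\tilde q)(\Delta_h^+).
\]
Hence $\mathcal{J}:=J\oplus\cdots\oplus J$ (the $n$-fold diagonal map, an isometry on $X_\rho^n\to c_0^n$ with the max norm) satisfies $\mathcal{J}A_d\mathcal{J}^{-1}=A_h^+$. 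Here $A_h^+$ is the matrix of \eqref{Cauchy orden n disretizacion hacia adelante} built from $\tilde p_i,\tilde q$. It follows that $\mathcal{J}e^{tA_d}\mathcal{J}^{-1}=e^{tA_h^+}$ and solutions correspond.

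To see that $A_h^+$ arises from a PDE of the form \eqref{caso ecuacion polinomial} or \eqref{caso ecuación racional}, I need $\tilde q$ normalized with constant term $1$. If $q(\rho)\neq 0$, which holds since $\rho$ lies on the boundary of $\rho\overline{\mathds{D}}$ where $q$ does not vanish, I divide all $\tilde p_i$ and $\tilde q$ by $q(\rho)$; this leaves the quotients unchanged. Reading off the coefficients, $\tilde p_i(w)=-\sum_j \tilde a_{i-1,j}w^j$ and $\tilde q(w)/q(\rho)=1+\sum_j\tilde a_{n,j}w^j$ then define the new PDE. In the polynomial case $q\equiv1$ and no normalization is needed. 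This direction settles the first assertion.

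For the converse, I use the inverse substitution $p_i(z)=\tilde p_i(z-\rho)$, with $\rho=1/h$. The hypothesis on the zeros of $\tilde q$ relative to $\overline{B(-1/h,1/h)}$ translates into zeros of $q$ outside $\rho\overline{\mathds{D}}$. Normalizing by $q(0)=\tilde q(-\rho)\neq0$, again a boundary point of the excluded disc, gives the continuous problem; the conjugation $\mathcal{J}^{-1}A_h^+\mathcal{J}=A_d$ is the same identity read backwards.

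The main obstacle is justifying that the Riesz--Dunford calculus commutes with the affine substitution and with the isometric conjugation. Concretely, I need $J f(\partial_x)J^{-1}=f(J\partial_xJ^{-1})$ and $f(T+\rho I)=(f(\cdot+\rho))(T)$ for rational $f$ holomorphic near the spectrum. For polynomials and inverses of polynomials this is elementary algebra: conjugation preserves products and inverses. Since all entries here are rational, I would argue directly in that way rather than via contour integrals. The remaining care point is checking the normalization constant is nonzero, which I handle via the boundary-point remark above.
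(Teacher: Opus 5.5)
Your proposal is correct and matches the paper's proof: conjugating by the coefficient isometry turns $\partial_x$ into $\rho B=\Delta_h^+ +\rho I$ with $h=1/\rho$, so $(p_i/q)(\partial_x)$ corresponds to $((p_i/q)\circ g_h)(\Delta_h^+)$ with $g_h(z)=z+1/h$, and the converse uses $g_h^{-1}$. Your normalization by $q(\rho)$ (resp.\ $\tilde q(-\rho)$), which restores constant term $1$, is a detail the paper omits; one small correction is that $-\rho$ is the center of $\overline{B(-1/h,1/h)}$, not a boundary point, though this still gives $\tilde q(-\rho)\neq 0$.
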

\begin{proof}
Indeed, take $\rho=1/h$, $g_h(z):=z+1/h$ and define $\Phi_\rho : X_\rho \rightarrow c_0$ as the isometric isomorphism between $X_\rho$ and $c_0(\mathds{N}_0)$. Recall that under this isometry $\partial_x\simeq \rho B$. Then since $\Delta_h^+=\frac{B-I}{h}$, for any function $f$ holomorphic on $\overline{B(0, \rho)}$ we have:  
$$\Phi_{\rho}\circ f(\partial_x)=(f\circ g_h)(\Delta^+_h)\circ \Phi_{\rho},$$
that is, the following diagram
$$
\begin{CD}
X_\rho @> f(\partial_x) >> X_\rho \\
@V \Phi_\rho VV @VV \Phi_\rho V \\
c_0 @>> (f\circ g_h)(\Delta_h^+) > c_0
\end{CD}
$$
commutes. Hence if $p_1, ..., p_n, q$ are polynomial functions defining a block operator matrix $A_d$ as in \eqref{Cauchy orden n caso polinomial} or \eqref{Cauchy orden n caso racional} on a Herzog space of weight $\rho>0$, then the functions $(p_1/q)\circ g_h, ..., (p_n/q)\circ g_h$ define an isometric operator $A_h^+$ of the form of equation \eqref{Cauchy orden n disretizacion hacia adelante}.\\

Conversely, taking $g_h^{-1}(z)=z-1/h$ we have that for any function $f$ holomorphic on $\overline{B(-1/h, 1/h)}$: 
$$\Phi_{\rho}^{-1}f(\Delta_h^+)=(f\circ g_{h}^{-1})(\partial_x)\circ \Phi_\rho^{-1},$$
so that 
$$
\begin{CD}
c_0 @> f(\Delta_h^+) >> c_0 \\
@V \Phi_\rho^{-1} VV @VV \Phi_\rho^{-1} V \\
X_{\rho} @>> (f\circ g_h^{-1})(\partial_x) > c_0
\end{CD}
$$
commutes. And therefore for every block operator matrix $A_h^+$ as in \eqref{Cauchy orden n disretizacion hacia adelante}, there is an isometric operator $A_d$ on a Herzog space $X_{\rho}^n$ with $\rho=1/h$. 
\end{proof}
\section{Some applications}
In this section, we will apply the previous results on two different models: the fourth order Moore-Gibson-Thompson equation and the viscous van
Wijngaarden–Eringen equation. Although the conditions for chaos were studied in \cite{lizamamurilloMGT} and \cite{conejerol_lizama_murillo2016vanwjingaarne}, we prove that no condition on the coefficients is needed if we want to ensure chaos just for the solution of the equation and not the solution and its derivatives. We also provide conditions on the spatial step of the discretization for Devaney chaos in the semidiscrete models. 
\subsection{Fourth-order Moore-Gibson-Thompson equation}
Let us consider the model: 
\begin{equation}\label{Moore-Gibson-Thompson}
    \frac{\partial^4u}{\partial t^4}+\alpha\frac{\partial ^3u}{\partial t^3}+\beta\frac{\partial ^2 u}{\partial t^2}-\gamma \frac{\partial ^4 u}{\partial t^2\partial x^2}-\delta \frac{\partial ^3 u}{\partial t\partial x^2}-r\frac{\partial ^2u}{\partial x^2}=0,\quad \alpha, \beta, \gamma, \delta, r>0.
\end{equation}
Following a similar proceeding as in system \eqref{sistema ecuaciones caso polinomico}, we can write the fourth order Moore-Gibson-Thompson equation as:
\begin{equation}\label{Cauchy Moore-Gibson-Thompson}
     \frac{\partial}{\partial t}\begin{pmatrix}
         u_1\\
         u_2\\
         u_3\\
         u_4
     \end{pmatrix}=\begin{pmatrix}
0 & I & 0 & 0 \\
0 & 0 & I & 0 \\
0 & 0 & 0 & I \\
r\partial^2_{x} & \delta\partial^2_x & \gamma\partial^2_x-\beta I & -\alpha I  
\end{pmatrix}\begin{pmatrix}
    u_1\\
    u_2\\
    u_3\\
    u_n
\end{pmatrix},\quad \begin{pmatrix}
    u_1(0)\\
    u_2(0)\\
    u_3(0)\\
    u_4(0)
\end{pmatrix}\in X_{\rho}^4.\\
 \end{equation}
 In \cite{lizamamurilloMGT}, the authors provided a sufficient condition in terms of the coefficients of the equation that ensure chaos for the solution semigroup on $X_\rho^4$ for some big enough weight $\rho>0$. The following Theorem states that no condition is needed if we want to ensure chaos strictly on the solution of the equation and not on the solution and its derivatives. 
\begin{theorem}\label{caos MGT} For any $\alpha, \beta, \delta, r, \gamma>0$ there is some $\rho_0>0$ such that the projection family associated to the solution semigroup of the Moore-Gibson-Thompson equation is Devaney chaotic on $X_\rho$ for all $\rho>\rho_0$.  
\begin{proof}
    In order to prove the result we apply Corollary \ref{corolario para polinomios} identifying the block operator matrix
    $$A_d=\begin{pmatrix}
0 & I & 0 & 0 \\
0 & 0 & I & 0 \\
0 & 0 & 0 & I \\
r\partial^2_{x} & \delta\partial^2_x & \gamma\partial^2_x-\beta I & -\alpha I  
\end{pmatrix}$$
and the polynomials $p_1(z):=r z^2$, $p_2(z):=\delta z^2$, $p_3(z):=\gamma z^2-\beta$, $p_4(z):=-\alpha$. As a consequence we have: 
$$G_d(\lambda, z)=rz^2+\lambda(\delta z^2)+\lambda^2(\gamma z^2-\beta)-\lambda^3\alpha-\lambda^4=z^2(r+\lambda\delta+\lambda^2\gamma)-(\lambda^4+\lambda^3\alpha+\lambda^2\beta).$$
And therefore for each fixed $\lambda\in\mathds{C}$ we have that $G_d(\lambda, z)=0$ if and only if 
$$z=\pm \sqrt{\frac{\lambda^4+\lambda^3\alpha+\lambda^2\beta}{r+\lambda\delta+\lambda^2\gamma}}.$$
Considering for instance $\lambda=1$ we have that $z=\pm\sqrt{\frac{1+\alpha+\beta}{r+\delta+\gamma}}$, which indicates that $\text{disc}_z{(G_d)}(1)\neq 0$. The conclusion holds as an immediate consequence of  Corollary \ref{corolario para polinomios}.
\end{proof}
\end{theorem}
\begin{theorem}
    The following assertions hold: 
    \begin{enumerate}[(i)]
        \item The backward semidiscretization of the Moore-Gibson-Thompson equation is not Devaney chaotic on $\ell^p(\mathds{N}_0)$, $1\leq p<\infty$ or $c_0(\mathds{N}_0)$. 
        \item For all $\alpha, \beta, \delta, r, \gamma>0$ such that $\delta\neq \alpha\gamma$ or $\alpha r\neq \beta \delta$, there is some $h_0>0$ such that the projection family of the forward semidiscretization of the Moore-Gibson-Thompson equation is Devaney chaotic on $\ell^p(\mathds{N}_0)$ or $c_0(\mathds{N}_0)$ for each $h<h_0$.
        \item For all $\alpha, \beta, \delta, r, \gamma>0$ such that $\delta= \alpha\gamma$ and $\alpha r= \beta \delta$, the forward semidiscretization of the Moore-Gibson-Thompson equation is not Devaney chaotic on $\ell^p(\mathds{N}_0)$, $1\leq p<\infty$ or $c_0(\mathds{N}_0)$.
    \end{enumerate}
\end{theorem}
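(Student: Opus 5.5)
Part (i) requires no new computation. The fourth-order Moore--Gibson--Thompson equation has the form \eqref{caso ecuacion polinomial}, with $q\equiv 1$ and polynomials $p_1(z)=rz^2$, $p_2(z)=\delta z^2$, $p_3(z)=\gamma z^2-\beta$, $p_4(z)=-\alpha$, and at least one of these is nonconstant. Its backward semidiscretization is therefore a particular instance of \eqref{Cauchy orden n disretizacion hacia atrás}, and Theorem \ref{backward semidiscretization} applies directly for every $h>0$.

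For (ii) and (iii) I would first factor the characteristic polynomial. Set
$$Q(\lambda):=\gamma\lambda^2+\delta\lambda+r,\qquad R(\lambda):=\lambda^2(\lambda^2+\alpha\lambda+\beta).$$
As in the proof of Theorem \ref{caos MGT}, this gives $G_h(\lambda,z)=Q(\lambda)z^2-R(\lambda)$, with the same polynomials now evaluated at the forward variable. As a quadratic in $z$, its discriminant is $4Q(\lambda)R(\lambda)$. This is not identically zero, and it vanishes only at finitely many $\lambda$.

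\textbf{Part (ii).} I will apply Corollary \ref{corolario para polinomios hacia adelante}, so I need $\lambda_0\in i\mathds{R}$ and $z_0$ with $\textnormal{Re}(z_0)<0$, $G_h(\lambda_0,z_0)=0$ and $\textnormal{disc}_z(G_h)(\lambda_0)\neq0$. Take $\lambda=iy$ with $y\in\mathds{R}$. The roots in $z$ are $\pm\sqrt{R(iy)/Q(iy)}$, so it suffices that $R(iy)/Q(iy)$ is not a real number in $(-\infty,0]$. Then the square root has nonzero real part, and I can choose the sign that makes it negative. To check this, compute the imaginary part of $(\beta-y^2+i\alpha y)\,\overline{Q(iy)}$, using $R(iy)=-y^2(\beta-y^2+i\alpha y)$:
$$\textnormal{Im}\bigl[(\beta-y^2+i\alpha y)(r-\gamma y^2-i\delta y)\bigr]=y\bigl[(\alpha r-\beta\delta)+y^2(\delta-\alpha\gamma)\bigr].$$
Under the hypothesis $\delta\neq\alpha\gamma$ or $\alpha r\neq\beta\delta$, this expression vanishes for only finitely many $y$. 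I can therefore pick $y$ outside this finite set and also outside the finite zero set of $QR$ on $i\mathds{R}$. For such $y$ the ratio is non-real, $\textnormal{disc}_z(G_h)(iy)\neq0$, and the corollary yields $h_0$.

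\textbf{Part (iii).} Here $\delta=\alpha\gamma$ and $r=\beta\delta/\alpha=\beta\gamma$. It follows that $Q(\lambda)=\gamma(\lambda^2+\alpha\lambda+\beta)$, and hence
$$G_h(\lambda,z)=(\lambda^2+\alpha\lambda+\beta)(\gamma z^2-\lambda^2).$$
By the forward analogue of Theorem \ref{espectro de la matriz}, applied with $\Delta_h^+=(B-I)/h$, the point spectrum of $A_h^+$ consists of those $\lambda$ for which some $z\in B(-1/h,1/h)$ satisfies $G_h(\lambda,z)=0$. Since $\alpha,\beta>0$, the roots of $\lambda^2+\alpha\lambda+\beta$ have negative real part. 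The remaining eigenvalues are $\lambda=\pm\sqrt{\gamma}\,z$ with $\textnormal{Re}(z)<0$, and these are never purely imaginary. Hence $\sigma_P(A_h^+)\cap i\mathds{R}=\emptyset$. By Theorem \ref{necessary condition chaos} the semigroup is not even sub-chaotic, so neither it nor its projection family is chaotic.

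The main obstacle is in (ii): making sure the condition from Corollary \ref{corolario para polinomios hacia adelante} really reduces to the non-reality of $R/Q$ on $i\mathds{R}$. This depends on identifying the discriminant as $4QR$ and on choosing $y$ outside finitely many exceptional values.
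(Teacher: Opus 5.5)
Your proof is correct. Parts (i) and (ii) follow the paper, and part (iii) takes a genuinely different and more robust route.

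\textbf{Parts (i) and (ii).} Part (i) is the same direct appeal to Theorem \ref{backward semidiscretization}. In (ii) you compute the imaginary part of $z^2$ along $\lambda=iy$ exactly as the paper does. There are two minor differences. First, the paper splits into three cases and picks $\omega=1$, $\omega=2$, or any $\omega\neq 0$; you instead note that $y\bigl[(\alpha r-\beta\delta)+y^2(\delta-\alpha\gamma)\bigr]$ is a nonzero polynomial, so only finitely many $y$ are excluded. Second, you actually justify $\textnormal{disc}_z(G_h)(iy)=4Q(iy)R(iy)\neq 0$, which the paper only asserts.

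\textbf{Part (iii).} The paper shows that for $\lambda\in i\mathds{R}$ the $z$-roots satisfy $z^2=-\omega^2/\gamma$, hence are purely imaginary. It then invokes Corollary \ref{corolario para polinomios hacia adelante} to conclude non-chaos. You instead factor
\[
G_h=(\lambda^2+\alpha\lambda+\beta)(\gamma z^2-\lambda^2).
\]
You read off $\sigma_P(A_h^+)$ from the point-spectrum description; this is legitimate because $f(\Delta_h^+)=(f\circ\phi)(B)$ with $\phi(w)=(w-1)/h$ mapping $\mathds{D}$ onto $B(-1/h,1/h)$. This gives $\sigma_P(A_h^+)\cap i\mathds{R}=\emptyset$, and you apply Theorem \ref{necessary condition chaos} directly.

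The underlying fact is the same in both: imaginary-axis roots lie outside $B(-1/h,1/h)$, and the roots of the quadratic factor lie in the open left half-plane. Your route is better justified, for two reasons.
\begin{enumerate}[(a)]
\item The equivalence in Corollary \ref{corolario para polinomios hacia adelante} concerns the existence of some $h_0$ with sub-chaos for all $0<h<h_0$. So the failure of its condition (ii) does not by itself exclude chaos at a given fixed $h$.
\item The corollary's proof rests on Theorem \ref{caos matriz semigrupo hacia adelante}, whose standing hypothesis fails in this case. The $z$-independent factor $\lambda^2+\alpha\lambda+\beta$ divides both $G_h$ and $\partial_z G_h$, so $\textnormal{res}_\lambda(G_h,\partial_z G_h)\equiv 0$.
\end{enumerate}
Your argument uses only the unconditional necessary condition for sub-chaos. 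It therefore works for every $h>0$ and rules out even sub-chaos. The paper's version is shorter, but it leans on the corollary beyond the setting in which that corollary is actually proved.
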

\begin{proof}
    Assertion $(i)$ is a direct consequence of Theorem \eqref{backward semidiscretization}. \\
    For assertions $(ii)$ and $(iii)$ we apply corollary \ref{corolario para polinomios} identifying 
    $$A_h^+=\begin{pmatrix}
0 & I & 0 & 0 \\
0 & 0 & I & 0 \\
0 & 0 & 0 & I \\
r(\Delta_h^+)^2 & \delta(\Delta_h^+)^2 & \gamma(\Delta_h^+)^2-\beta I & -\alpha I  
\end{pmatrix}$$
and the polynomials $p_1(z):=r z^2$, $p_2(z):=\delta z^2$, $p_3(z):=\gamma z^2-\beta$, $p_4(z):=-\alpha$. As a consequence, we compute the characteristic polynomial: 
$$G_h(\lambda, z)=rz^2+\lambda(\delta z^2)+\lambda^2(\gamma z^2-\beta)-\lambda^3\alpha-\lambda^4=z^2(r+\lambda\delta+\lambda^2\gamma)-(\lambda^4+\lambda^3\alpha+\lambda^2\beta).$$
As in the proof of \ref{caos MGT} we have that, for each fixed $\lambda\in\mathds{C}$, $G_h(\lambda, z)=0$ if and only if 
$$z=\pm \sqrt{\frac{\lambda^4+\lambda^3\alpha+\lambda^2\beta}{r+\lambda\delta+\lambda^2\gamma}}.$$
Now take $\lambda=\omega i$ for some $w\in\mathds{R}$. Then we obtain: 
\begin{align*}
&z^2(\omega)= \frac{\omega^4 - \beta\omega^2 - i\alpha\omega^3}{r - \gamma\omega^2 + i\delta\omega} \\
&= \frac{((\omega^4 - \beta\omega^2) - i\alpha\omega^3) \cdot ((r - \gamma\omega^2) - i\delta\omega)}{(r - \gamma\omega^2)^2 + (\delta\omega)^2} \\
&= \frac{\left((\omega^4 - \beta\omega^2)(r - \gamma\omega^2) - \alpha\delta\omega^4\right) - i \omega^3 \left((\delta - \alpha\gamma)\omega^2 + (\alpha r - \beta\delta)\right)}{(r - \gamma\omega^2)^2 + \delta^2\omega^2}.\\
\end{align*}

In the case of assertion $(iii)$, since $\delta=\alpha\gamma$ and $\alpha r=\beta\delta$ then $\text{Im}(z^2(\omega ))=0$ for all $\omega\in\mathds{R}$. Moreover, taking $r=\beta\gamma$ and $\delta =\alpha \gamma$ we obtain: 
\begin{align*}
    & z^2(\omega) =\frac{\omega^4 - \beta\omega^2 - i\alpha\omega^3}{r - \gamma\omega^2 + i\delta\omega} =\frac{\omega^4-\beta \omega^2-i\alpha \omega^3}{\beta\gamma-\gamma \omega^2+i\alpha\gamma \omega }\\
     & =\frac{\omega^2(\omega^2-\beta -i\alpha \omega)}{\gamma(\beta-\omega^2+i\alpha \omega) }=\frac{-\omega^2}{\gamma}. 
\end{align*}
And therefore, for any $\lambda =\omega i \in i\mathds{R}$, if $z(\omega)$ is such that $G_h(\lambda, z(\omega ))=0$ then $\text{Re}(z(\omega))=0$. So that by Corollary \ref{corolario para polinomios hacia adelante} the forward semidiscretization cannot be Devaney chaotic. \\

For assertion $(ii)$ we may consider three cases.
\begin{itemize}
    \item \textit{Case 1:} $\delta \neq \alpha\gamma$ and $(\delta - \alpha\gamma) + (\alpha r - \beta\delta) \neq 0$. In this case, taking $\omega =1$ we have that $\text{Im}(z^2(1))\neq 0$. Hence for $\lambda=i\in i\mathds{R}$, there exists some $z_0\in\mathds{C}$, with $\text{Re}(z_0)<0$, such that $G_h(\lambda, z_0)=0$ and $\text{disc}_z(G_h)(i)\neq 0$, so that the conclusion holds by Corollary \ref{corolario para polinomios hacia adelante}.
    \item \textit{Case 2:} $\delta \neq \alpha\gamma$ and $(\delta - \alpha\gamma) + (\alpha r - \beta\delta) = 0$. Now we take $\omega =2$, so that  $4(\delta-\alpha\gamma)+(\alpha r-\beta\delta)\neq 0$ and $\text{Im}(z^2(2))\neq 0$. Consequently, for $\lambda=2i\in i\mathds{R}$ the conclusion follows as in the first case. 
     \item \textit{Case 3:} $\delta = \alpha\gamma$ and $(\alpha r - \beta\delta) \neq 0$. In this case we have that $\text{Im}(z^2(\omega))\neq 0$ for any $\omega\neq 0$, so that the conclusion holds as in the previous cases taking any $\lambda=\omega i \in i\mathds{R}$ with $\omega\neq 0$.  
\end{itemize}
\end{proof}
\subsection{Viscous van Wijngaarden-Eringen equation}
For $a_0>0$ and $Re_d>0$ we consider the following equation: 
\begin{equation}\label{wijngaarden}
\partial_t^2 u-\partial_x^2 u=(Re_d)^{-1}\partial_t\partial_x^2u+a_0^2\partial_t^2\partial_x^2u.
\end{equation}
Which we can write as the following system: 
\begin{equation}\label{sistema ecuaciones wijngaarden}
\begin{cases}
    &u_1=u\\
    &\partial_t u_1=u_2\\
    &\partial_t(I-a_0^2\partial_x^2)u_2=\partial_x^2u_1+(Re_d)^{-1}\partial_x^2
\end{cases}
\end{equation}
And therefore for $\rho>0$ such that $\rho<\frac{1}{a_0}$ we have the following abstract Cauchy problem: 
\begin{equation}\label{Cauchy Wijngaarden-Eringen}
     \frac{\partial}{\partial t}\begin{pmatrix}
         u_1\\
         u_2\\
     \end{pmatrix}=\begin{pmatrix}
0 & I &  \\
(I-a_0^2\partial_x^2)^{-1}\partial^2_{x} & (I-a_0^2\partial_x^2)^{-1}(Re_d)^{-1}\partial_x^2 
\end{pmatrix}\begin{pmatrix}
    u_1\\
    u_2\\
\end{pmatrix},\quad \begin{pmatrix}
    u_1(0)\\
    u_2(0)\\
\end{pmatrix}\in X_{\rho}^2.\\
 \end{equation}
 As in the Moore-Gibson-Thompson equation, the following theorem shows that no additional conditions are required to guarantee chaos in the projection family of operators associated to the solution semigroup. 
 \begin{theorem}\label{caos wijngaarden}
     For each $0<\rho<\frac{1}{a_0}$, the projection family associated to the solution semigroup of the viscous van Wijngaarden-Eringen equation is Devaney chaotic on $X_{\rho}$. 
 \end{theorem}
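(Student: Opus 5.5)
The plan is to apply Corollary~\ref{corolario para racionales caso particular} with $n=2$, $k=2$, $\alpha=a_0^2$. The entries of the block matrix in \eqref{Cauchy Wijngaarden-Eringen} are $p_1(z)=z^2$ and $p_2(z)=(Re_d)^{-1}z^2$. The hypothesis $|\alpha|\rho^k<1$ is exactly $\rho<1/a_0$, so $(I-a_0^2\partial_x^2)^{-1}$ is well defined and $A_d$ is bounded on $X_\rho^2$. The relevant polynomial is
$$P(\lambda,z)=z^2+\lambda (Re_d)^{-1}z^2-\lambda^2(1-a_0^2z^2)=z^2\bigl(1+(Re_d)^{-1}\lambda+a_0^2\lambda^2\bigr)-\lambda^2 .$$
Writing $c(\lambda):=1+(Re_d)^{-1}\lambda+a_0^2\lambda^2$, the polynomial $P(\lambda,\cdot)$ has the form $c(\lambda)z^2-\lambda^2$, so $\textnormal{disc}_z(P)(\lambda)=4c(\lambda)\lambda^2$ up to sign. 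This is a nonzero polynomial in $\lambda$, so the hypothesis $\textnormal{disc}_z(P)\not\equiv 0$ holds.

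It then suffices to find $\lambda_0\in i\mathds{R}$ and $z_0\in\rho\mathds{D}$ with $P(\lambda_0,z_0)=0$ and $\textnormal{disc}_z(P)(\lambda_0)\neq0$. Take $\lambda_0=i\omega$ with $\omega>0$ real. Then
$$c(i\omega)=1-a_0^2\omega^2+i(Re_d)^{-1}\omega,$$
whose imaginary part is nonzero, so $c(i\omega)\neq0$. Together with $\lambda_0\neq0$ this gives $\textnormal{disc}_z(P)(i\omega)\neq0$. The roots are
$$z_0=\pm\sqrt{\frac{-\omega^2}{c(i\omega)}}, \qquad |z_0|^2=\frac{\omega^2}{|c(i\omega)|}.$$
For small $\omega$ we have $|c(i\omega)|\to1$, so $|z_0|^2\approx\omega^2\to0$. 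Hence, for any fixed $0<\rho<1/a_0$, choosing $\omega$ small enough puts $z_0\in\rho\mathds{D}$. The key point is that the whole interval $(0,1/a_0)$ is covered because the roots can be pushed arbitrarily close to the origin. By Corollary~\ref{corolario para racionales caso particular}, $A_d$ then generates a sub-chaotic semigroup on $X_\rho^2$.

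The main subtlety is that the statement asks for chaos of the projection family, not just sub-chaos of the semigroup. For this I will go back through the proof of Corollary~\ref{corolario para racionales} to Theorem~\ref{caos matriz semigrupos}. There, condition (ii) is shown to be equivalent to (i), which includes chaos of the projection family. So I need condition (ii) in terms of $G_d=P/q$ and the resultant, and the equivalence argument inside the proof of Corollary~\ref{corolario para racionales} converts $(\lambda_0,z_0)$ into such a pair $(\hat\lambda,\hat z)$.

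The density $\overline{\pi(Y)}=X_\rho$ comes from the argument in the corollary after Theorem~\ref{caos matriz A}. The first coordinates $e^{g(\lambda)x}$ of the eigenvector field are exponentials with $g$ a nonconstant holomorphic function, hence with open range. Testing against functionals through $c_0$ and using the identity theorem shows that these exponentials span a dense subspace. I expect no real obstacle beyond making this transfer to $X_\rho$ precise.
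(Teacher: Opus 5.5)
Your proposal is correct and follows the same route as the paper. Both apply Corollary~\ref{corolario para racionales caso particular} with $p_1(z)=z^2$, $p_2(z)=(Re_d)^{-1}z^2$, $\alpha=a_0^2$, $k=2$, and take $\lambda_0=i\omega$ with $\omega$ small enough that the root $z_0$ of $P(\lambda_0,\cdot)$ lies in $\rho\mathds{D}$. You are more careful than the paper: you explicitly verify $\textnormal{disc}_z(P)(\lambda)=4c(\lambda)\lambda^2\not\equiv 0$ and $\textnormal{disc}_z(P)(i\omega)\neq 0$, and you trace the chaos of the projection family back to Theorem~\ref{caos matriz semigrupos}, whereas the paper leaves both points implicit.
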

 \begin{proof} We will use Corollary \ref{corolario para racionales caso particular} identifying the matrix 
 $$A_d:=\begin{pmatrix}
0 & I &  \\
(I-a_0^2\partial_x^2)^{-1}\partial^2_{x} & (I-a_0^2\partial_x^2)^{-1}(Re_d)^{-1}\partial_x^2 
\end{pmatrix}$$
and the polynomials $p_1(z)=z^2$, $p_2(z)=(Re_d)^{-1}z^2$. As a consequence we have: 
$$P(\lambda, z)=z^2+\lambda(Re_d)^{-1}z^2-\lambda^2(1-a_0^2z^2).$$
Now for any $\lambda=bi$, $b\in\mathds{R}$ we have that 
$$P(bi,z)=0\iff z=\pm\sqrt{\frac{(bi)^2}{1+(Re_d)^{-1}bi+(bi)^2a_0^2}}=\frac{bi}{\pm\sqrt{1+(Re_d)^{-1}bi-a_0^2b^2}},$$
so that there is some small enough $b_0\in\mathds{R}$ such that 
$$\left|\frac{b_0i}{\sqrt{1+(Re_d)^{-1}b_0i-2a_0^2b_0^2}}\right|<\rho.$$
Now taking $\lambda_0:=b_0i$ and $z_0:=\frac{b_0i}{\sqrt{1+(Re_d)^{-1}b_0i-a_0^2b_0^2}}$ the conclusion follows by Corollary \ref{corolario para racionales caso particular}.
 \end{proof}
 Now we may take the semidiscretization of the viscous van Wijngaarden-Eringen equation. Let us note that for each $h>2a_0$:
 $$\{z\in\mathds{C}\text{ such that }1-a_0^2z^2=0\}\subset \mathds{C}\setminus\left(\overline{B(1/h, 1/h)}\cup\overline{B(-1/h, 1/h)}\right).$$
 And therefore we can express the semidiscretization for a spatial step $h>2a_0$ as: 
 \begin{equation}\label{Cauchy Wijngaarden-Eringen semidiscretization}
     \frac{\partial}{\partial t}\begin{pmatrix}
         u_1\\
         u_2\\
     \end{pmatrix}=\begin{pmatrix}
0 & I &  \\
(I-a_0^2\Delta_h^2)^{-1}\Delta_h & (I-a_0^2\Delta_h^2)^{-1}(Re_d)^{-1}\Delta_h^2 
\end{pmatrix}\begin{pmatrix}
    u_1\\
    u_2\\
\end{pmatrix},\quad \begin{pmatrix}
    u_1(0)\\
    u_2(0)\\
\end{pmatrix}\in X^2,\\
 \end{equation}
 where $X=\ell^p(\mathds{N}_0)$ or $c_0(\mathds{N}_0)$. 
 \begin{theorem}
     The following assertions hold: 
     \begin{enumerate}[(i)]
         \item For each $h>2a_0$, the backward semidiscretization of the viscous van Wijngaarden-Eringen equation is not chaotic on $\ell^p(\mathds{N}_0)$ $1\leq p<\infty$ or $c_0(\mathds{N}_0)$. 
         \item Given $a_0, Re_d>0$ and $h>0$ such that $2a_0<h<1/Re_d$ then the projection family of the forward discretization of the viscous van Wijngaarden-Eringen equation is Devaney chaotic on $\ell^p(\mathds{N}_0)$ or $c_0(\mathds{N}_0)$.  
     \end{enumerate}
 \end{theorem}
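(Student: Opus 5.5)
Assertion (i) is immediate from Theorem \ref{backward semidiscretization}. For $h>2a_0$ the zeros $\pm 1/a_0$ of $q(z)=1-a_0^2z^2$ lie outside $\overline{B(1/h,1/h)}$, so the semidiscrete system \eqref{Cauchy Wijngaarden-Eringen semidiscretization} with $\Delta_h=\Delta_h^-$ has the form \eqref{Cauchy orden n disretizacion hacia atrás}. Here $p_1(z)=z^2$ and $p_2(z)=(Re_d)^{-1}z^2$ are nonconstant. (I read the first entry of the matrix as $(I-a_0^2\Delta_h^2)^{-1}\Delta_h^2$, as in the continuous model.) That theorem then gives that $(A_h^-)^*$ has eigenvalues, so the semigroup is not hypercyclic and hence not chaotic.

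For (ii) I will apply Corollary \ref{corolario para racionales hacia adelante} with $n=2$, $q(z)=1-a_0^2z^2$, $p_1(z)=z^2$ and $p_2(z)=(Re_d)^{-1}z^2$. The condition $h>2a_0$ gives $2/h<1/a_0$, so the roots of $q$ lie outside $\overline{B(-1/h,1/h)}$ and $A_h^+$ is well defined and bounded. The relevant polynomial is
$$P_h(\lambda,z)=z^2\bigl(1+(Re_d)^{-1}\lambda+a_0^2\lambda^2\bigr)-\lambda^2.$$
This has degree $2$ in $z$, and its discriminant in $z$ is, up to a constant, $4\lambda^2\bigl(1+(Re_d)^{-1}\lambda+a_0^2\lambda^2\bigr)$. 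That expression is not identically zero, and it is nonzero whenever $\lambda\neq0$ and the quadratic factor does not vanish. On $i\mathds{R}$ the factor becomes $1-a_0^2b^2+i(Re_d)^{-1}b$, which never vanishes for $b\neq0$. So every $\lambda_0=bi$ with $b\neq0$ has $\textnormal{disc}_z(P_h)(\lambda_0)\neq0$. It remains to find such a $b$ for which one of the roots
$$z_\pm(b)=\pm\frac{bi}{\sqrt{1-a_0^2b^2+i(Re_d)^{-1}b}}$$
lies in the open disc $B(-1/h,1/h)$. Writing $z=x+iy$, this is the condition $|z|^2<-2x/h$, equivalently $\textnormal{Re}(-1/z)>h/2$.

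The main obstacle is this disc-membership estimate, and it is where the hypothesis $h<1/Re_d$ should enter. From $z^2=-b^2/w$ with $w=1-a_0^2b^2+i(Re_d)^{-1}b$, we get $1/z^2=-w/b^2$. I will choose the sign of the root so that $\textnormal{Re}(z)<0$. Then I will compute $\textnormal{Re}(-1/z)=\textnormal{Re}\bigl(\sqrt{-w}\bigr)/|b|$, taking the branch with nonnegative real part. Since $-w=a_0^2b^2-1-i(Re_d)^{-1}b$, the standard formula gives
$$\textnormal{Re}\sqrt{-w}=\sqrt{\tfrac{|w|+a_0^2b^2-1}{2}}.$$
I will study this as $b\to\infty$ and as $b\to0$. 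As $b\to\infty$ the ratio $\textnormal{Re}\sqrt{-w}/|b|$ tends to $a_0$, and I need $a_0>h/2$, which conflicts with $h>2a_0$. So that regime fails and the small-$b$ regime must be used. As $b\to0$ we have $|w|\approx1+\tfrac{1}{2}\bigl((Re_d)^{-2}-2a_0^2\bigr)b^2$. This gives $\textnormal{Re}\sqrt{-w}\approx|b|\sqrt{(Re_d)^{-2}/4}=|b|/(2Re_d)$, so $\textnormal{Re}(-1/z)\to 1/(2Re_d)$.

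The required inequality $1/(2Re_d)>h/2$ is exactly $h<1/Re_d$. By continuity, some small $b_0\neq0$ satisfies it. Setting $\lambda_0=b_0i$ and $z_0=z_\pm(b_0)$, Corollary \ref{corolario para racionales hacia adelante} then yields sub-chaos. I will conclude chaos of the projection family as in Theorem \ref{caos matriz semigrupo hacia adelante}(i), since the first coordinates $e_\lambda$ of the eigenvector field are total. I will double-check the expansion, which is the delicate step; if the constant comes out differently, I would refine it by monotonicity in $b$.
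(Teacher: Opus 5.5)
Your proposal is correct and follows the paper's proof. Part (i) comes from Theorem \ref{backward semidiscretization}. For part (ii), your quantity $\textnormal{Re}(-1/z)=\textnormal{Re}\sqrt{-w}/|b|$ is exactly the paper's $v_b/b$ with $w_b=\sqrt{w}=u_b+iv_b$, so your condition $\textnormal{Re}(-1/z)>h/2$ is the paper's $h<2v_b/b$, and the limit $b\to0$ gives the same threshold $h<1/Re_d$. You also supply details the paper leaves implicit or slightly garbled: you choose the root with $\textnormal{Re}(z)<0$ explicitly, you check that $\textnormal{disc}_z(P_h)(bi)$, a nonzero multiple of $b^2\bigl(1-a_0^2b^2+i(Re_d)^{-1}b\bigr)$, is nonzero for $b\neq0$, and you cite the rational Corollary \ref{corolario para racionales hacia adelante} rather than the polynomial one.
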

 \begin{proof}
     The first assertion is a direct consequence of Theorem \ref{backward semidiscretization}. \\
For the second assertion, we will use Corollary \ref{corolario para polinomios hacia adelante} identifying $p_1(z)=z^2$, $p_2(z)=(Re_d)^{-1}z^2$ and $q(z)=1-a_0^2z^2$. Hence we have: 
$$P_h(\lambda, z)=z^2(1+(Re_d)^{-1}\lambda+\lambda^2a_0^2)-\lambda^2.$$
As in the proof of Theorem \ref{caos wijngaarden}, for each $\lambda=bi$ with $b\in\mathds{R}_{+}$ we have: 
\begin{equation}\label{ec-zb}
    P_h(bi, z_b)=0\iff z_b=\frac{\pm bi}{\sqrt{1+(Re_d)^{-1}bi-a_0^2b^2}}=\frac{\pm bi}{w_b},
\end{equation}
where $w_b:=\sqrt{1+(Re_d)^{-1}bi-a_0^2b^2}$. Now we have that $z_b\in B(-1/h, 1/h)$ if and only if $|z_b+1/h|<\frac{1}{h}$, or equivalently $|z_b+1/h|^2<\frac{1}{h^2}$. Developing the last inequation we obtain: 
$$\left|z_b+\frac{1}{h}\right|^2=(\text{Re}(z_b)+1/h)^2+\text{Im}(z_b)^2=\text{Re}(z_b)^2+2\frac{\text{Re}(z_b)}{h}+\frac{1}{h^2}+\text{Im}(z_b)^2<\frac{1}{h^2},$$
which is equivalent to the following expression: 
\begin{equation}\label{ec-f1}
|z_b|^2+\frac{2\text{Re}(z_b)}{h}<0 \iff h<\frac{2\text{Re}(z_b)}{-|z_b|^2}.\end{equation}
Denote $w_b:=u_b+v_b i$ and take the positive branch of the square root in \eqref{ec-zb}. Then we have 
$$z_b=\frac{bi}{w_b}=\frac{bi(\overline{w_b})}{|w_b|^2}, $$
and therefore
$$|z_b|^2=\frac{b^2}{|w_b|^2}, \quad\text{ Re}(z_b)=\frac{- bv_b}{|w_b|^2}.$$
So that we can express condition \eqref{ec-f1} as: 
\begin{equation}\label{ec-f1.1}
    h<\frac{ 2v_b}{b}.
\end{equation}
Now recall that for $s+ti\in\mathds{C}$, we have that $\sqrt{s+ti}=u+vi$, where $u, v\in\mathds{R}$ are given as: 
$$u=\sqrt{\frac{s+\sqrt{s^2+t^2}}{2}},\quad v=\frac{t}{|t|}\sqrt{\frac{-s+\sqrt{s^2+t^2}}{2}}.$$
As a consequence, taking $s_b:=1-a_0b^2$ and $t_b:=\frac{1}{Re_d}b$ we obtain:
\begin{equation}\label{ec-f2}
    u_b=\sqrt{\frac{s_b+\sqrt{s_b^2+t_b^2}}{2}}=\sqrt{\frac{t_b^2}{2\left(\sqrt{s_b^2+t_b^2}-s_b\right)}}=t_b\sqrt{\frac{1}{2\left(\sqrt{s_b^2+t_b^2}-s_b\right)}},
\end{equation}
\begin{equation}\label{ec-f3}
    v_b=\frac{t_b}{|t_b|}\sqrt{\frac{-s_b+\sqrt{s_b^2+t_b^2}}{2}}=
    t_b \sqrt{\frac{1}{2\left(\sqrt{s_b^2+t_b^2}+s_b\right)}}.
\end{equation}
This lead us to conclude that for $b>0$ such that $z_b=\frac{b_i}{w_b}$ and $h>0$ with 
$$h<\frac{2v_b}{b}=\frac{2}{Re_d}\sqrt{\frac{1}{2\left(\sqrt{s_b^2+t_b^2}+s_b\right)}}=\frac{1}{Re_d}\sqrt{\frac{2}{\left(\sqrt{(1-a_0^2b^2)^2+(b/Re_d)^2}+(1-a_0^2b^2)\right)}},$$
then $P_h(bi, z_b)=0$ and $z_b\in B(-1/h, 1/h)$. 
Finally, we observe that: 
$$\lim_{b\rightarrow 0}\frac{1}{Re_d}\sqrt{\frac{2}{\left(\sqrt{(1-a_0^2b^2)^2+(b/Re_d)^2}+(1-a_0^2b^2)\right)}}=\frac{1}{Re_d}.$$
By the hypothesis $h<\frac{1}{Re_{d}}$, and therefore there exists some $b_0>0$ such that 
$$h<\frac{2v_{b_0}}{b_0}
=\frac{1}{Re_d}\sqrt{\frac{2}{\left(\sqrt{(1-a_0^2b_0^2)^2+(b_0/Re_d)^2}+(1-a_0^2b_0^2)\right)}}$$
and the conclusion follows. 
 \end{proof}
	

\begin{thebibliography}{99}
			
			\bibitem{BM2}
			J. Banasiak and M. Moszyński
			\newblock {A generalization of Desch–Schappacher–Webb criteria for chaos}. 
			\newblock {\em Discrete and Continuous Dynamical Systems}, 12 (2005), 959--972.
			
			\bibitem{Banks-Brook}
			J. Banks, J. Brooks, G. Cairns, G. Davis, and P. Stacey.
			\newblock On Devaney's definition of chaos.
			\newblock {\em Amer. Math. Monthly}, 99(4) (1992), 332--334.

\bibitem{bayart-bermudez}
F. Bayart and T. Bermúdez. 
\newblock{Semigroups of chaotic operators.} 
\newblock{\em Bull. Lond. Math. Soc.},  41(5) (2009), 823--830.

\bibitem{bayartgrivauxhyperci} F. Bayart and S. Grivaux. 	
   \newblock{Hypercyclicit\'e: le r\^ole du spectre ponctuel unimodulaire.}
			\newblock{\em C. R. Math. Acad. Sci. Paris}, 338 (2004).

    
			\bibitem{bayart}
            F. Bayart and E. Matheron. \newblock{Dynamics of Linear Operators.} 
            \newblock{Cambridge University Press, 2009.} 
   
  \bibitem{brezis2011functional}
			H. Brezis. 
			\newblock{Functional Analysis, Sobolev Spaces and Partial Differential
				Equations.}
			\newblock{Springer, New York, 2011.}
			
 
			\bibitem{conejerol_lizama_murillo2016vanwjingaarne} J. A. Conejero, C. Lizama and M. Murillo-Arcila.
		On the existence of chaos for the Viscous Van Wjingaarden Equation. {\it Chaos, Solitons and Fractals,}  89 (2016).
			
			\bibitem{Conejero_lizama_murillo2017Chaotic} J. A. Conejero, C. Lizama, and M. Murillo-Arcila. Chaotic semigroups from second order
			partial differential equations. {\it J. Math. Anal. Appl.,} 456 (2017).
			
			
			\bibitem{conejero_lizama_rodenas2015chaotic}
			J.~A. Conejero, C.~Lizama, and F.~R\'odenas.
			\newblock Chaotic behaviour of the solutions of the {M}oore-{G}ibson-{T}hompson
			equation. 
			\newblock {\em Appl. Math. Inf. Sci.}, 9 (5) (2015).
			


			\bibitem{conejero_martinez-gimenez_peris_rodenas2016chaotic}
			J.~A. Conejero, F.~Mart{\'{\i}}nez-Gim{\'e}nez, A.~Peris, and F.~R{\'o}denas.
			\newblock Chaotic asymptotic behaviour of the solutions of the
			{L}ighthill-{W}hitham-{R}ichards equation.
			\newblock {\em Nonlinear Dynam.}, 84 (1) (2016). 
			
            
	
			\bibitem{conejero_peris_trujillo2010chaotic}
			J.~A. Conejero, A.~Peris, and M.~Trujillo.
			\newblock Chaotic asymptotic behavior of the hyperbolic heat transfer equation
			solutions.
			\newblock {\em Internat. J. Bifur. Chaos}, 20(9) (2010).
			
		
			
            \bibitem{cooke}
            K. L. Cooke.
\newblock{Differential-Difference Equations.}
International Symposium on Nonlinear Differential Equations and Nonlinear Mechanics,
Academic Press,
1963.
            
            
            \bibitem{conway-functional} 
			J. B. Conway. 
			\newblock{A course in Functional Analysis.}
			\newblock{Graduate Texts in Mathematics, Springer, 1990}. 
			
			
			
			\bibitem{desch_schappacher_webb1997hypercyclic}
			W.Desch, W.Schappacher and G.F. Webb.
			\newblock{Hypercyclic and chaotic semigroups of linear operators.}
			\newblock{\em Ergodic Theory Dynam. Systems}, 17(4) (1997).
			
			\bibitem{Devaney}
			R. Devaney.
			\newblock {An Introduction to Chaotic Dynamical Systems}.
			\newblock{CRC press, 2018.}
		
			\bibitem{dunford-schwartz}
			N. Dunford and J.T. Schwartz.
			\newblock{Linear Operators},
			\newblock{Interscience publishers}, New York, 1957. 
			
			\bibitem{engel-nagel}
			K. J. Engel and R.Nagel.
			\newblock {One-parameter Semigroups for Linear Evolution Equations.} {Graduate Texts in Mathematics}.
			\newblock{Springer-Verlag, New York, 2000.}


			
			
			\bibitem{Alfred}
			K. G. Grosse-Erdmann and A. Peris.
			\newblock {Linear Chaos}.
			\newblock {Universitext. Springer, London, 2011.}


			
			
			
			
			
			\bibitem{Herzog}
			G. Herzog.
			\newblock{On a universality of the heat equation.}
			\newblock {\em Math. Nachr.}, 188 (1997).
			
			\bibitem{kaup}
			B. Kaup and L. kaup. 
            \newblock{Holomorphic Functions of Several Variables}. 
            \newblock{Walter de Gruyter, Berlin-New York, 1983.}


			
		\bibitem{lizamamurillo2023} 
			C. Lizama and M. Murillo-Arcila.
			\newblock On the dynamics of the Damped Extensible Beam 1D-equation.
			\newblock {\em J. Math. Anal. Appl.}, 522 (2023).

            \bibitem{lizamamurilloMGT}
            C. Lizama and M. Murillo-Arcila. 
            \newblock On the existence of chaos for the fourth-order Moore-Gibson-Thompson equation. 
            \newblock{\em Chaos Solitons Fractals}, 176 (2023).

            \bibitem{lizamamurillo2024}
            C. Lizama and Marina Murillo-Arcila. 
            \newblock{On semidiscrete models dominated by the heat, wave and Laplace equations.}
            \newblock{\em Discrete and Continuous Dynamical Systems,} 44(8) (2024).

            \bibitem{LMP}
            C. Lizama, M. Murillo-Arcila and J. Puerta-Fern\'andez. 
            \newblock{From stability to chaos: a complete classification of the damped Klein-Gordon dynamics.}
            \newblock{\em Math. Methods Appl. Sci.}, 49(9) (2026).

            \bibitem{LMV}
            C. Lizama, M. Murillo-Arcila and \'A. Vargas-Moreno.
            \newblock On the dynamics of a class of partial differential equations.
            \newblock {\em J. Math. Anal. Appl.}, 545 (2025).
			
	

            \bibitem{MPV-chaos}
            M. Murillo-Arcila, A. Peris and Á. Vargas-Moreno.
            \newblock{Chaotic finite difference operators}, \newblock{\em Chaos},  33(9) (2023).
			
			
		
			
			\bibitem{Pa}
			A. Pazy.
			\newblock{Semigroups of Linear Operators and Applications to Partial Differential Equations.} Springer, New York, 1983.
            
\bibitem{MOL}
W. E. Schiesser and G. W. Griffiths.  \newblock{A Compendium of Partial Differential Equation Models: Method of Lines Analysis with Matlab,} Cambridge University Press, 2009.

\bibitem{slavik}
A. Slavík and P. Stehlík. 
\newblock{Dynamic diffusion-type equations on discrete-space domains.}
\newblock{\em Journal of Mathematical Analysis and Applications},
427(1) (2015).


\bibitem{zhu}
P.X. Zhu and Q.M. Xiang.
\newblock{Devaney chaos in high-dimensional linear fourth-order PDE.}
\newblock{\em Qual. Theory Dyn. Syst.}, 25(1) (2026).


            \bibitem{yang}
            P.X. Zhu, Q. Yang and C.L. Zhang. 
            \newblock{The mechanism of chaotic complexity in high-dimensional linear third-order partial differential equation.}
            \newblock{\em Math. Methods Appl. Sci.},   49(5) (2026).                 
		\end{thebibliography}
\end{document}